\crefname{hypothesis}{Hypothesis}{Hypotheses}
\title{Error analysis of BDF 1--6 time-stepping methods for the transient Stokes problem:
\\
Velocity and pressure estimates\thanks{Submitted to the editors \today.
\funding{The work of B.K.~is funded by DFG Heisenberg Programme (Pr.-ID 446431602).}}}
\author{Alessandro Contri\thanks{Department of Mathematical Sciences, NTNU, Trondheim, Norway 
  (\email{alessandro.contri@ntnu.no}).}
\and Bal\'{a}zs Kov\'{a}cs\thanks{Institut für Mathematik, Universit\"{a}t Paderborn, Paderborn, Germany
(\email{balazs.kovacs@math.uni-paderborn.de}).}
\and Andr\'{e} Massing\thanks{Department of Mathematical Sciences, NTNU, Trondheim, Norway 
  (\email{andre.massing@ntnu.no}).}}
\newcommand*{\addFileDependency}[1]{
  \typeout{(#1)}
  \@addtofilelist{#1}
  \IfFileExists{#1}{}{\typeout{No file #1.}}
}
\newcommand{\bard}{{\bar{\partial}_q}}
\newcommand\myeq{\mathrel{\stackrel{\makebox[0pt]{\mbox{\normalfont\tiny def}}}{=}}}
\newcommand{\MT}{multiplier technique }
\DeclareMathOperator{\divergence}{div}
\renewcommand{\div}{\divergence}
\begin{document}

\maketitle

\begin{abstract}
    We present a new stability and error analysis
    of fully discrete approximation schemes for 
    the transient Stokes equation. 
    For the spatial discretization, we consider a wide
    class of Galerkin finite element methods 
    which includes both inf-sup stable spaces and symmetric pressure stabilized formulations. 
    We extend the results from Burman and Fernández [\textit{SIAM J.\,Numer.\,Anal.}, 47 (2009), pp.~409-439] 
    and provide a unified theoretical analysis of
    backward difference formulae (BDF methods) of order 1 to 6.
    The main novelty of our approach lies in deriving optimal-order stability and 
    error estimates for both the velocity and the pressure
    using Dahlquist's $G$-stability concept
    together with the multiplier technique introduced by Nevanlinna and Odeh 
    and recently by Akrivis et~al.\ [\textit{SIAM J.\,Numer.\;Anal.}, 59 (2021), pp.~2449-2472].
    When combined with a  method dependent Ritz projection for the initial data, unconditional stability can be shown while
    for arbitrary interpolation, pressure stability is subordinate to the fulfilment of a mild inverse CFL-type condition between space and time discretizations. 
\end{abstract}

\begin{keywords}
    Transient Stokes equation, backward finite differences, $G$-stability, multiplier technique, symmetric pressure stabilization
\end{keywords}
  
\begin{AMS}
    65M12, 65M15, 65M60, 76M10
\end{AMS}

\section{Introduction}
 Let $\Omega\subset\bbR^d,\; d\in\{2,3\}$ be a Lipschitz domain with boundary~$\partial\Omega$ and $J := (0,T),\; T>0$ be a finite time interval.
 We consider the transient Stokes equation as a simplified prototype
 problem for  the time-dependent flow of an incompressible fluid:
 find $\bu \colon \Omega\times J \to\bbR^d$ and
 $p\colon\Omega\times J \to \bbR$ such  that
\begin{equation}
    \left\{
    \begin{alignedat}{4}
        ~ \partial_t \bu-\nu\Delta \bu+\nabla p = &\ \bf && \qquad\text{ in }  \Omega\times J,\\
        \nabla\cdot \bu =&\ 0 &&\qquad\text{ in } \Omega\times J, \\
        \bu=&\ 0  && \qquad\text{ on }  \partial\Omega \times J, \\
        \bu(\cdot,0)=&\ \bu_0 &&\qquad\text{ in } \Omega,
    \end{alignedat}\right.
    \label{eq:1}
\end{equation}
where $\bf\colon\Omega \times J \to\bbR$ is an external force field,
$\bu_0\colon\Omega\to \bbR^d$ is the given initial, divergence-free velocity field and $\nu>0$ denotes a constant kinematic viscosity.

\subsection{State of the art}
This paper is concerned with the stability and error analysis
of fully discrete formulations of the transient Stokes
equation~\eqref{eq:1}.
We analyze BDF methods of order $1$--$6$ in combination
with a large class of Galerkin finite element methods
which utilize so-called symmetric pressure stabilizations.
For the classical analysis of inf-sup stable Galerkin finite elements paired with lower order time stepping schemes,
we refer to \cite[Part XIV]{Ern2021a} and \cite[Chapter 3--7]{John2016}. 
Due to the complexity of constructing mixed finite elements which provably satisfy a discrete uniform inf-sup condition, 
stabilized methods have been developed and analyzed which
circumvent the inf-sup condition and allow for equal-order interpolation spaces.
Here, a main challenge in the design and theoretical analysis of the
fully discrete transient Stokes problem is the intricate interplay
between the selected stabilized space discretization of the Stokes
operator and the chosen time stepping scheme.
For instance, the popular strongly consistent Pressure Stabilized Petrov--Galerkin (PSPG) method, 
proposed in \cite{HughesFrancaBalestra1986}, introduces a coupling 
between the time derivative of the velocity and the pressure gradient in the transient case.
As a result, pressure instabilities can occur when an 
inverse parabolic Courant--Friedrichs--Lewy (CFL) condition between
the time step $\tau$ and the mesh size $h$ in the form $\tau>C
h^{2}/\nu$ is not satisfied. Moreover, $H^1$-conform discrete velocity
spaces are also subject to an inverse parabolic or hyperbolic CFL
condition for BDF-1 or BDF-2 schemes, respectively, see \cite{Burman2011}. 
These small time step instabilities can be avoided by
a careful construction of the initial data~\cite{John2015}.

As an alternative to residual-based stabilization approaches such as the PSPG method,
a number of weakly consistent symmetric pressure stabilizations have been proposed.
They have the favorable property that they typically
commute with the discretization of both temporal operators 
and optimal control problems.
Instances include the pressure gradient stabilization by Brezzi and Pitk\"aranta~\cite{Brezzi1984},
Orthogonal Subscale Stabilizations (OSS) \cite{Codina1997},
methods based on certain pressure projections such as the
Local Projection Stabilization (LPS) \cite{Becker2001}
or the method by Dohrmann and Bochev~\cite{Dohrmann2004},
and methods based on a Continuous Interior Penalty (CIP) for
$H^1$-conform discrete pressure variable~\cite{BurmanHansbo2006a}.
In~\cite{Burman2009},
all the aforementioned symmetric pressure stabilized formulations
were put into a general abstract framework 
to provide a unified analysis of fully discrete
formulations employing either BDF-1, BDF-2 or Crank-Nicolson
time stepping methods.
Optimal convergence rates for the velocity and pressure are derived if
a suitable Ritz projection is used for the initial data, otherwise a
weak inverse CFL condition of the form $\tau>Ch^{2k}$ must be
satisfied, where $k$ is the polynomial degree of the employed velocity
approximation space. The analysis was later
expanded to the full Navier--Stokes case in \cite{GarciaArchilla2020}.

The majority of analyses of either inf-sup stable or stabilized
formulations of the transient Stokes equation are restricted to at most second order in time
schemes such as BDF-1, BDF-2 or Crank-Nicolson methods.
Notable exceptions are discontinuous Galerkin time stepping methods,
and we refer to \cite{ChrysafinosWalkington2010} and
\cite{Ahmed2017} 
for the case of inf-sup stable and LPS stabilized spatial discretizations, respectively,
and to \cite{BehringerVexlerLeykekhman} for an analysis which involves inf-sup stable spaces
and only minimal regularity assumptions on the data.
To the best of our knowledge, the only available
analysis of higher order BDF methods for the transient Stokes problem
is provided
in \cite{Liu2013},
which considers \mbox{BDF-$q$} schemes up to
order 5 on both stationary and moving domains.
The analysis is quite elaborate as it requires the solution of a
nonlinear system of coefficients that are needed to guarantee that
certain telescopic properties of the discrete time derivative and a
positivity condition for the energy bilinear form are upheld when 
suitable discrete velocity test
functions are inserted into
the momentum equation.
The coefficients do not come in close form and
are computed numerically for $q=3,4,5$ while 
the case $q = 6$ could not be treated.
Most importantly, the described approach
does not provide stability or error estimates
for the pressure approximation.
However,
in recent years, 
Dahlquist's $G$-stability theory \cite{Dahlquist1978}
in connection with the \MT devised by
Nevanlinna and Odeh~\cite{Nevanlinna1981}
has developed into a powerful technique to
analyse higher order BDF time stepping methods for parabolic
problems in an elegant and unified way.
Originally developed for the 
stability analysis of linear multistep methods for contractive nonlinear ordinary differential equations,
the use of these techniques for parabolic PDEs using BDF up to
order 5 has been first presented in \cite{Lubich2013}. 
The extension to quasi-linear parabolic PDEs is due to
\cite{Akrivis2015a}. 
These results have been
successively extended to a larger class of problems (see
\cite{Akrivis2017,Akrivis2017a,Akrivis2021}) and the original
estimates by Nevanlinna (\cite{Nevanlinna1978,Nevanlinna1979}) were
sharpened in \cite{Akrivis2015}. The case of BDF-6 is treated in
\cite{Akrivis2021a}. We refer to \cite{Hairer1993,Hairer1996} for the
theory of BDF methods and $G$-stability. For PDEs, fully discrete error
analysis of high-order BDF methods utilizing approaches similar to
what is done in this text are considered in \cite{Lubich2013} with
application to moving surfaces, while~\cite{MCF,Willmore} and \cite{Akrivis2021}
used the multiplier techniques but testing with the time derivatives.
The \MT is also used in the analysis of the so called Scalar
Auxiliary Variable (SAV) method to prove convergence of higher order
time stepping schemes, see for example \cite{Huang2021}.

\subsection{New contributions and outline of the paper}
To the best of our know\-ledge there is no fully discrete analysis of
the transient Stokes problem for BDF-q from 1 to 6 providing
stability and optimal error estimates for both velocity and pressure.
The goal of our paper is therefore to close this gap, and at the same time, we wish to
exemplify how the concept of $G$-stability and multiplier techniques can be
used to analyze time-dependent saddle point problems 
by mimicking arguments from the stability analysis of the continuous problems.
We start therefore by reviewing the weak formulation of the transient Stokes problem
in Section~\ref{subsec:spaces+weakform}, with a particular emphasis
on how bounds for the velocity and its time-derivate
in the respectively $L^2$--$H^1$ and $L^2$--$L^2$ norms
lead to a pressure estimate in the $L^2$--$L^2$ norm via the inf-sup condition.
Section~\ref{sec:preliminaries+notation} 
then provides details on the fully discrete formulation of the transient Stokes problem
and collects the relevant theoretical results for its analysis.
We recall the spatial discretization
framework from~\cite{Burman2009} which includes
a large family of Galerkin finite element methods
including symmetric pressure stabilized and inf-sup stable formulations.
Each spatial discretization yields 
a method-dependent Ritz projection that will play an
important role later in the analysis. 
For the discretization in time, we use 
BDF-$q$ methods of order $q=1,\dotsc,6$, and 
we recall the main results from Dahlquist's theory of $G$-stability \cite{Dahlquist1978}
and multiplier techniques by Nevanlinna--Odeh \cite{Nevanlinna1981} and Akrivis~et~al.~\cite{Akrivis2021a}
that will be crucial for the subsequent stability analysis.  

In Section~\ref{sec:stability},
we start the development of the new stability analysis of the fully discrete transient
Stokes problem.
As a first step,  $L^{\infty}$ in time
bounds for the velocity and pressure with respect to
$H^1$-norm for the velocity
and the stabilization induced semi-norm for the pressure
are established by combining
$G$-stability with the Nevanlinna--Odeh multiplier technique,
similar to analysis in \cite{Lubich2013,Akrivis2015a}.
But in contrast to the parabolic case, we also exploit the
stabilized discrete formulation of the incompressibility 
constraint to recast certain pressure-velocity coupling terms
as pressure contribution measured in a semi-norm.
Next, we establish a bound 
for the discrete time-derivative of the velocity 
with respect to the $L^{2}$--$L^2$ norm.
This is the trickiest part of our stability analysis
as we need to test the weak momentum equation at the current
and previous time-step with suitably scaled discrete time-derivatives
to use the \MT and $G$-stability,
similar to analysis in \cite{MCF,Akrivis2021}. 
Again, the weak incompressibility equation allows us to handle
velocity-pressure coupled terms which arise during the testing procedure.
Moreover, unconditional stability for the pressure is only ensured
if the initial values are divergence-free in a suitable discrete sense.
The final bound for the pressure is derived
from the modified discrete inf-sup condition similar to the continuous case.
Section~\ref{sec:stab-bdf6} is then devoted to extend our stability analysis to BDF-6. 
Since there is no Nevanlinna--Odeh multiplier for BDF-6,
we combine our analysis with the approach from \cite{Akrivis2021a} and use a relaxed
multiplier which in turn  requires a Toepliz matrix based argument to ensure
that certain cross term contributions remain positive after telescoping
as explained in \cite{Akrivis2021a}. However, the final stability estimates
are completely analogous to the ones derived in Section~\ref{sec:stability}.  

The derivation of optimal-order error bounds for both
the velocity and pressure is performed in Section~\ref{sec:convergence}.
The Ritz projection is used to split the total error and allows us 
to rely on the stability estimates of Section~\ref{sec:stability} and Section~\ref{sec:stab-bdf6}. 
Concerning the velocity errors, optimal-order error bounds are derived using the finite
element interpolation of a sufficiently regular initial data. For the
pressure, the stability constraint of discretely divergence-free initial data
results in being crucial in order to obtain optimal-order error
bounds. Afterwards in Section~\ref{sec:small_timestep}, we briefly describe
how the stability analysis can be adapted
if the initial data is given without such a property,
leading to the same small time-step limit phenomena previously analyzed in \cite{Burman2009} 
for low-order time stepping schemes. Finally,
we present a series of numerical
experiments in Section~\ref{section:numerics} which illustrate and complement our theoretical results.

\section{The transient Stokes problem}
\label{subsec:spaces+weakform}
In this section we present the weak formulation of the transient Stokes problem and introduce the notation used throughout this paper. While we focus on the specific case of the transient Stokes problem, our derivations are presented in such a form that they can easily be applied to general time-dependent saddle point problems which exhibit the same structure in their weak formulations. The main purpose of this section is to review the main arguments in the stability analysis for the velocity and the pressure
in the fully continuous setting, as our main contribution is to show how to translate these arguments 
to the full discretization to prove stability and convergence of the numerical method.

\subsection{Notation}
\label{subsec:notation}
Throughout the paper the standard notation for Sobolev spaces  is used (see, for example, \cite{Ern2021}). Given a measurable domain $\Omega\in\mathbb{R}^d,\; d=2,3$ the norm (resp.~semi-norm) in $W^{m,p}(\Omega)$ is denoted by $\norm[auto]{\;\cdot\;}_{m,p}$
(resp.~$|{\;\cdot\;}|_{m,p}$). The case $p=2$ will be distinguished by using $H^m(\Omega)$ to denote the space $W^{m,2}(\Omega)$  and by using the norm (resp.~semi-norm) notation $\norm[auto]{\;\cdot\;}_{m}$ (resp.~$|{\;\cdot\;}|_{m}$). The space $H_0^1(\Omega)$ is the closure in $H^1(\Omega)$ of the set of infinitely
differentiable functions with compact support in $\Omega$, and $H^{-1}(\Omega)$ is its dual. We denote by $\inner[auto]{\cdot}{\cdot}$ the inner product in $[L^2(\Omega)]^d$ (the domain of integration $\Omega$ is taken 
for granted) and we define $L^2_0(\Omega)=\{q\in L^2(\Omega): \inner{q}{1}=0\}$.
Given the Banach space $W$ and a time interval $J$, we denote the corresponding Bochner space, i.e.~the space of
functions defined on the time interval $J$ with values in $W$, by $L^p(J;W)$ with norm
$\norm{\;\cdot\;}_{L^p(J,W)}$. We also define $H^q(J,W):=\{\partial_t^{(i)}v\in L^2(J;W), \; i=0,\ldots,q\}$.
Identifying $[L^2(\Omega)]^d$ with its dual, the spaces $H_0^1(\Omega) \subset L^2(\Omega) \subset H^{-1}(\Omega)$ constitutes a Gelfand triple.

\subsection{Weak formulation}
\label{subsec:weak-form-trans-stokes}
We recast the transient Stokes problem into a general
time-dependent saddle point problem in weak formulation where we seek
$(\bu(t), \; p(t))\in V \times Q$ such that
\begin{subequations}
	\label{eq:2}
    \begin{align}
        \inner[auto]{\partial_t\bu(t)}{\bv}+a\inner[auto]{\bu(t)}{\bv}+b\inner[auto]{p(t)}{\bv}&=
        \inner[auto]{\bf(t)}{\bv} &&\text{a.e. in }J, \label{eq:2a}\\
        b\inner[auto]{q}{\bu(t)}&=0 && \text{a.e. in } J, \label{eq:2b}\\
        \bu(0)&=\bu_0 && \text{a.e. in }\Omega,
    \end{align}
\end{subequations}
for all $(\bv,\;q)\in V\times Q$ and $t>0$. Here, $V$ and $Q$ are
appropriate Hilbert spaces endowed with the respective norms
$\norm{\cdot}_V$ and $\norm{\cdot}_Q$. 
As usual, we assume that the involved bilinear forms
are bounded so that
\begin{equation}
    a\inner[auto]{\bu}{\bv}\leqslant C_1
    \norm[auto]{\bu}_V\norm[auto]{\bv}_V, 
    \qquad 
    b\inner[auto]{\bv}{q}\leqslant C_2\norm[auto]{\bv}_V\norm[auto]{q}_Q,
    \label{eq:3}
\end{equation}
holds for some positive finite constants $C_1,C_2 \geqslant 0$. 
To ensure well-posedness of \eqref{eq:2}, we require that
$a\inner{\cdot}{\cdot}$ is coercive and that the
Ladyzhenskaya--Bab\^{u}ska--Brezzi (LBB) condition (also known as
inf-sup condition) holds for $b\inner[auto]{\cdot}{\cdot}$,
\begin{subequations}
	\label{eq:4}
    \begin{alignat}{2}
    &a\inner[auto]{\bv}{\bv}\geqslant \alpha\norm{\bv}_V^2, && \quad \forall \bv\in V,
    \label{eq:4a}
    \\
    &\beta\norm[auto]{q}_Q\leqslant \sup_{\bv\in V}\frac{\abs{b\inner[auto]{q}{\bv}}}{\norm[auto]{\bv}_V} && \quad \forall q\in Q,
    \label{eq:4b}
    \end{alignat}
\end{subequations}
for some constants $\alpha, \beta > 0$.
To ease the notation, most inequalities will be written
down with generic constants $C$ that are independent of the mesh
size and data, and we will be explicit only when the constants 
are crucial for the analysis.
For the concrete case of the transient Stokes equation, the bilinear forms in \eqref{eq:2} are given by
\begin{equation}
    a\inner[auto]{\bu}{\bv}=\inner[auto]{\nu\nabla \bu}{\nabla \bv}, \qquad 
    b\inner[auto]{p}{\bv}=-\inner[auto]{p}{\nabla\cdot \bv},
    \label{eq:5}
\end{equation}
where following time independent function spaces and associated norms are employed,
\begin{gather}
    H=[L^2(\Omega)]^d, \quad V=[H_0^1(\Omega)]^d, \quad Q=L_0^2(\Omega),
    \label{eq:6}
    \\
    \norm[auto]{\bv}_H=\inner[auto]{\bv}{\bv}^{\frac{1}{2}},\quad 
    \norm[auto]{\bv}_V=\norm{\nu^{\frac{1}{2}}\nabla \bv}_H=a\inner{\bv}{\bv}^{\frac{1}{2}}, \quad 
    \norm[auto]{q}_Q=\norm{\nu^{-\frac{1}{2}}q}_H.
    \label{eq:7}
\end{gather}
In particular note that the norm $\norm{\cdot}_V$ identifies with the so called energy norm resulting in the equality $a\inner[auto]{\bv}{\bv}=\norm{\bv}_V^2$. By imposing homogeneous Dirichlet boundary conditions on the whole boundary, the Poincaré inequality $\norm{\bv}_H\leqslant c_P \nu^{-\frac{1}{2}}\norm{\bv}_V$ holds, see, e.g.,~\cite[Lemma 3.27]{Ern2021}.
\begin{remark}
    The analysis that will follow depends on the fact that we work with the equality $a\inner[auto]{\bv}{\bv}=\norm{\bv}_V^2$. 
    In the case of a general saddle point problem posed on Hilbert spaces where assumptions \eqref{eq:3} and \eqref{eq:4} hold, we can always work with the energy norm defined by $a\inner{\bv}{\bv}=\norm{\cdot}_V^2$ and continue to have a well-posed problem. More details can be found in \cite[Section~49.2.3]{Ern2021b}.
\end{remark}

\subsection{Stability of the transient Stokes problem}
\label{subsection:continuous stability}
As usual, the a priori error estimates 
in this paper will be shown by separating the issues of stability and consistency. The stability proofs then all aim to bound the numerical solution in terms of forcing terms and initial values. 
Here,  we review the basic mechanism behind the stability proofs
for the fully continuous Stokes problem,
which we subsequently will try to mimic in order
to derive a priori stability estimates for the numerical method. 
This allows us to sketch the big picture used in the fully discrete stability analysis proposed in this article.
It is our intention to derive a priori stability estimates for both
the velocity and the pressure, where
the latter requires the stability of
the velocity derivative. For the continuous case it is possible to
derive an a priori bound in $L^2(J;V')$ by using time
distributional derivatives, see \cite[Section~72.4]{Ern2021a}.
For simplicity we follow the analysis in \cite[Section~72.3]{Ern2021a}
and choose $\bf\in L^2(J;H)$ instead.
 We also define $V_{0,\div}:=\{
\bv\in V | \nabla\cdot\bv =0 \}$ and require $\bu_0\in V_{0,\div}$
together with $(\bu,p)$ in $L^2(J;V)\times L^2(J;Q)$. An equivalent
reformulation of \eqref{eq:2} is then that
\begin{equation}
    \int_J\bigl(\inner[auto]{\partial_t\bu(t)}{\bv}+a\inner[auto]{\bu(t)}{\bv}+b\inner[auto]{p(t)}{\bv} -  b\inner[auto]{q}{\bu(t)} \bigr)\d t = \int_J \inner[auto]{\bf(t)}{\bv} \d t
    \label{eq:8}
\end{equation} 
holds for all $\bv \in L^2(J;V)$ and
all $q\in L^2(J;Q)$.
Under all these assumptions \eqref{eq:2} is well posed and admits a unique solution, see \cite[Section~72.2]{Ern2021a}. We also have that $\partial_t \bu\in L^2(J;H)$ and that \eqref{eq:2b} implies $\bu \in L^2(J,V_{0,\div})$.

\subsubsection*{Velocity bound}  Assuming sufficient regularity of the solutions,
we start by choosing as test functions $\bv = \bu(t) \in V$ and $q = p(t) \in Q$
in the weak form~\eqref{eq:2}.
Subtracting then~\eqref{eq:2b} from \eqref{eq:2a}, the contributions of the bilinear form $b\inner[auto]{\cdot}{\cdot}$ cancel out, yielding
\begin{align}
    \dfrac{1}{2}\dfrac{\d}{\d t}\norm[auto]{\bu(t)}^2+\norm[auto]{\bu(t)}_V^2 
    &= \inner[auto]{\partial_t \bu(t)}{\bu(t)} + a\inner[auto]{\bu(t)}{\bu(t)} = \inner[auto]{\bf(t)}{\bu(t)}
    \\
    &\leqslant \frac{c_P^2}{2\nu}\norm[auto]{\bf(t)}^2_{H}+\frac{1}{2}\norm[auto]{\bu(t)}^2_{V},
    \label{eq:9}
\end{align}
after the use of a Cauchy--Schwarz inequality, the Poincaré inequality and modified Young's inequalities. A successive integration in time over $J=(0,T)$ and rearrangement of terms gives the bound 
\begin{equation}
    \norm[auto]{\bu(T)}^2_H+\norm[auto]{\bu}_{L^2(J;V)}^2 \leqslant \norm[auto]{\bu_0}^2_H+\frac{c_P^2}{\nu}\norm[auto]{\bf}_{L^2(J;H)}^2.
    \label{eq:10}
\end{equation}

\subsubsection*{Pressure bound} A bound for the pressure can be obtained
starting from the inf-sup condition~\eqref{eq:4},
\begin{equation}
    \label{eq:11}
    \begin{aligned}
    \norm[auto]{p(t)}_Q&\leqslant \frac{1}{\beta} \sup_{\bv\in V}\frac{b(p(t),\bv)}{\norm[auto]{\bv}_V} =  \frac{1}{\beta} \sup_{\bv\in V}\frac{\inner[auto]{\bf(t)}{\bv}-a\inner[auto]{\bu(t)}{\bv}-\inner[auto]{\partial_t\bu(t)}{\bv}}{\norm[auto]{\bv}_V}  \\
    &\leqslant \frac{1}{\beta} \sup_{\bv\in V}\frac{\norm[auto]{\bf(t)}_{H}\norm[auto]{\bv}_H+\norm[auto]{\bu(t)}_V\norm[auto]{\bv}_V+\norm[auto]{\partial_t\bu(t)}_H\norm[auto]{\bv}_H}{\norm[auto]{\bv}_V}  \\
    &= \frac{1}{\beta}\left[ \frac{c_P}{\nu^{\frac{1}{2}}}\norm[auto]{\bf(t)}_{H}+ \norm[auto]{\bu(t)}_V + \frac{c_P}{\nu^{\frac{1}{2}}} \norm[auto]{\partial_t\bu(t)}_H \right].
    \end{aligned}
\end{equation}
We recognize at this point that a bound for $\partial_t\bu(t)$ 
in the $H$-norm is needed, which can be obtained
by formally testing \eqref{eq:2} with
$v = \partial_t\bu(t)$ and $q = 0$ leading to
\begin{align}
     \norm[auto]{\partial_t\bu(t)}_H^2+\frac{1}{2}\frac{\d}{\d t}\norm[auto]{\bu(t)}_V^2
    &=\norm[auto]{\partial_t\bu(t)}_H^2+a\inner[auto]{\bu(t)}{\partial_t\bu(t)}
    =\inner{\bf(t)}{\partial_t\bu(t)} 
    \\
    &\leqslant \frac{1}{2}\norm[auto]{\bf(t)}_H^2  + \frac{1}{2}\norm[auto]{\partial_t \bu(t)}_H^2
    \label{eq:12}
\end{align}
Given the initial condition $\bu_{0,\div}\in V_{0,\div}$ and integrating in $J$ leads to
\begin{equation}
    \norm[auto]{\partial_t\bu}_{L^2(J;H)}^2+\norm{\bu(T)}_V^2\leqslant \norm{\bu_0}_V^2 + \norm{\bf}_{L^2(J;H)}^2.
    \label{eq:14}
\end{equation}
Technically, $\partial_t\bu(t) \notin V$ and cannot be inserted
directly into the bilinear forms $a\inner{\cdot}{\cdot}$,
$b\inner{\cdot}{\cdot}$. We thus have to follow a semi-discrete
Galerkin-type argument and appeal to compactness results in order to
arrive to \eqref{eq:14} and we refer to
\cite[p.234-239]{Ern2021a} for the details.
As only a bound on the velocity derivative in terms of the Bochner norm is obtained, 
the pressure is typically estimated in the same way by integrating \eqref{eq:11} over $J$,
\begin{equation}
    \begin{aligned}
    \norm[auto]{p}_{L^2(J;Q)}^2 &\leqslant \frac{1}{\beta^2} \int_0^t \left[ \frac{c_P}{\nu^{\frac{1}{2}}}\norm[auto]{\bf(s)}_{H}+ \norm[auto]{\bu(s)}_V + \frac{c_P}{\nu^{\frac{1}{2}}} \norm[auto]{\partial_t\bu(s)}_H \right]^2 \d s \\
    &\leqslant \frac{3}{\beta^2} \int_0^t \left[ \frac{c_P^2}{\nu}\norm[auto]{\bf(s)}_{H}^2+ \norm[auto]{\bu(s)}_V^2 + \frac{c_P^2}{\nu} \norm[auto]{\partial_t\bu(s)}_H^2 \right] \d s \\
    & \leqslant \frac{3}{\beta^2} \left[\norm[auto]{\bu_0}_H^2 + \frac{c_P^2}{\nu}\norm[auto]{\bu_0}_V^2 + 3\frac{c_P^2}{\nu} \norm[auto]{\bf}_{L^2(J;H)}^2 \right].
    \end{aligned}
    \label{eq:16}
\end{equation}

\subsubsection*{Combined stability bound}
Summing \eqref{eq:10} and \eqref{eq:16}, we obtain the following stability estimate
for sufficiently regular solutions of the transient Stokes equation
\begin{equation}
\label{eq:stability estimate - continuous}
	\begin{aligned}
		  \norm[auto]{\bu(T)}^2_H+\norm[auto]{\bu}_{L^2(J;V)}^2
		 + \norm[auto]{p}_{L^2(J;Q)}^2
		  \leqslant C \left[ \norm[auto]{\bu_0}_H^2 + \frac{c_P^2}{\nu}\norm[auto]{\bu_0}_V^2 + \frac{c_P^2}{\nu}\norm[auto]{\bf}_{L^2(J;H)}^2\right] . 
	\end{aligned}
\end{equation} 

\section{Preliminaries and notation}
\label{sec:preliminaries+notation}
In this section, we collect the main assumptions 
and theoretical tools for the spatial and temporal discretization
approaches considered in this paper.
First, we briefly recap the spatial discretization framework of
Galerkin finite element methods with symmetric pressure stabilization.
Afterwards, the $q$-step Backward Difference Formulas and related stability 
concepts are reviewed.

\subsection{Space semi-discretization based on symmetric pressure stabilization}
We proceed with describing the spatial semi-discretization of the problem where we closely follow the presentation in~\cite{Burman2009}. 
Let $\{\cT_h\}_{0<h\leqslant 1}$ be a family of quasi-uniform triangulations of the 
domain $\Omega$. The subscript $h$ refers to the mesh size $h=\max_{T\in\mathcal{T}_h}h_T$ 
of $\cT_h$ where $h_T$ is
the diameter of a mesh cell $T \in \cT_h$.
We define the spaces of continuous and (possibly) discontinuous piecewise
polynomial functions of degree $k\geqslant 1$ and $l\geqslant 0\; (k-1\leqslant l\leqslant k)$, respectively,
\begin{subequations}
	\label{eq:17}
	\begin{align}
	    X_h&=\{\bv_h\in C(\Omega)\;:\;\bv_{h}\in\bbP_c^k(\mathcal{T}_h) \text{ and } \bv_{h}|_T\in\bbP^k(T)\; \forall T\in \cT_h\}, \label{eq:17a}
	    \\
	    M_h&=\{q_h\in L^2(\Omega)\;:\;q_{h}\in\bbP_{dc}^l(\cT_h)\}. \label{eq:17b}
	\end{align}
\end{subequations}
The approximate velocities will belong to the space $V_h=[X_h]^d\cap V$,
and for the pressure we will use either $Q_h=M_h\cap Q$ or $Q_h=M_h
\cap Q \cap C^0(\overline{\Omega})$.

To handle mixed finite element spaces which violate the discrete counterpart of the inf-sup condition~\eqref{eq:4b},
we allow for the addition of a pressure stabilization form
$j_h\colon Q_h\times Q_h\to \bbR$ to the discrete weak formulation of the transient Stokes problem.
We require that the stabilization form $j_h$ is
symmetric, positive semi-definite and
satisfy the boundedness properties
\begin{align}
    \abs{j_h\inner[auto]{p_h}{q_h}}&\leqslant \abs{p_h}_{j_h}\abs{q_h}_{j_h}\leqslant C\norm[auto]{p_h}_Q\norm[auto]{q_h}_Q \quad &\forall p_h,q_h\in Q_h, \label{eq:18b} 
\end{align}
where the first inequality for the $j_h$-induced semi-norms $\abs{\cdot}_{j_h}$
is automatically satisfied.
Next, we assume the existence of a quasi-interpolation operator 
$\cI_h^p \colon Q\to Q_h$
such that
\begin{alignat}{3}
    \abs[auto]{\cI_h^p q}_{j_h}^{\frac{1}{2}}
    &\leqslant C\frac{h^{s_p}}{\nu}\norm[auto]{q}_{s_p} && \quad \forall q\in H^{s}(\Omega)
    \label{eq:18c}
    \\
    \norm[auto]{q-\cI_h^p q}_{Q}&\leqslant C \frac{h^{l+1}}{\nu^{\frac{1}{2}}}\norm[auto]{q}_{l+1}
    && \quad \forall q\in H^{l+1}(\Omega).
    \label{eq:19}
\end{alignat}
where $s_p\myeq\min\{s,\bar{l},l+1\},\; \bar{l}\geqslant 1$ is the order of weak consistency
of the stabilization operator.
We also assume the existence of a quasi-interpolation operator $\cI_h^\bu \colon V\to V_h$ 
satisfying the approximation properties
\begin{subequations}
	\label{eq:20}
    \begin{align}
    &\norm[auto]{\bv-\cI_h^\bu \bv}_H+h\nu^{-\frac{1}{2}}\norm[auto]{\bv-\cI_h^\bu \bv}_V\leqslant C_\cI h^{r_\bu} \norm[auto]{\bv}_{r_\bu} , \label{eq:20a}\\
    &\abs{b\inner[auto]{q_h}{\bv-\cI_h^\bu \bv}}\leqslant C\abs[auto]{q_h}_{j_h}\left(
        \nu^{\frac{1}{2}}\norm[auto]{h^{-1}\left(\bv_h-\cI_h^\bu \bv\right)}_H+\norm[auto]{\bv_h-\cI_h^\bu \bv}_V
        \right) ,
        \label{eq:20b}
    \end{align}
\end{subequations}
for all $\bv\in [H^r(\Omega)]^d$, $r_\bu\myeq\min\{r,k+1\}$, and $(q_h,\bv)\in Q_h\times V$. 
Many methods fall under the umbrella of this type of symmetric pressure
stabilization, and a comprehensive list and respective description can be 
found in \cite{Burman2009} and \cite{GarciaArchilla2020}.
The assumptions above ensure that the following modified inf-sup condition holds,
see \cite{Burman2009}[Lemma 3.1]. 
\begin{lemma}
    There exist two constants $C,\beta$, independent of $h$ and $\nu$, such that
\begin{equation}
    \beta\norm[auto]{q_h}_Q\leqslant \sup_{\bv_h\in [V_h]^d}\frac{\abs{b\inner[auto]{q_h}{\bv_h}}}{\norm[auto]{\bv_h}_V}
    +C\abs[auto]{q_h}_{j_h} \qquad \forall q_h\in Q_h .
    \label{eq:lemma inf-sup}
\end{equation}
\end{lemma}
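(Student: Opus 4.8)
The plan is to derive the modified discrete inf-sup condition \eqref{eq:lemma inf-sup} by mimicking the continuous Fortin-type argument: take an arbitrary $q_h \in Q_h$, use the continuous inf-sup condition \eqref{eq:4b} to produce a velocity $\bv \in V$ realizing a fixed fraction of $\norm{q_h}_Q$ in the numerator $b(q_h,\bv)/\norm{\bv}_V$, and then transfer this to a discrete velocity $\bv_h = \cI_h^\bu \bv \in V_h$ using the quasi-interpolation operator and its properties \eqref{eq:20a}--\eqref{eq:20b}. First I would fix $q_h$; by \eqref{eq:4b} there is $\bv \in V$ with $\beta \norm{q_h}_Q \le b(q_h,\bv)/\norm{\bv}_V$, and after normalizing we may take $\norm{\bv}_V = 1$ so that $b(q_h,\bv) \ge \beta \norm{q_h}_Q$.

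Next I would split $b(q_h,\bv) = b(q_h, \cI_h^\bu \bv) + b(q_h, \bv - \cI_h^\bu \bv)$. The first term is bounded below by $b(q_h, \cI_h^\bu \bv) = b(q_h,\bv) - b(q_h,\bv - \cI_h^\bu\bv) \ge \beta\norm{q_h}_Q - |b(q_h,\bv-\cI_h^\bu\bv)|$. For the defect term I would invoke \eqref{eq:20b}, which bounds $|b(q_h,\bv-\cI_h^\bu\bv)|$ by $C|q_h|_{j_h}$ times a sum of scaled norms of $\bv - \cI_h^\bu \bv$; these in turn are controlled using the approximation property \eqref{eq:20a} with $r = 1$, so that $\nu^{1/2}\norm{h^{-1}(\bv_h - \cI_h^\bu\bv)}_H + \norm{\bv_h - \cI_h^\bu\bv}_V \le C\norm{\bv}_V = C$. (Here one must be slightly careful about the roles of $\bv_h$ and $\bv$ inside \eqref{eq:20b}; reading it with $\bv_h = \cI_h^\bu\bv$ or using quasi-uniformity and an inverse estimate to pass between the two, the right-hand side collapses to a constant multiple of $\norm{\bv}_V$.) Thus $b(q_h,\cI_h^\bu\bv) \ge \beta\norm{q_h}_Q - C|q_h|_{j_h}$, while the stability \eqref{eq:20a} also gives $\norm{\cI_h^\bu\bv}_V \le C\norm{\bv}_V = C$.

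Dividing by $\norm{\cI_h^\bu\bv}_V$ and bounding it by the constant $C$, I obtain
\[
  \sup_{\bv_h \in V_h} \frac{|b(q_h,\bv_h)|}{\norm{\bv_h}_V}
  \ge \frac{b(q_h,\cI_h^\bu\bv)}{\norm{\cI_h^\bu\bv}_V}
  \ge \frac{1}{C}\bigl(\beta\norm{q_h}_Q - C|q_h|_{j_h}\bigr),
\]
which rearranges to $\widetilde\beta \norm{q_h}_Q \le \sup_{\bv_h}|b(q_h,\bv_h)|/\norm{\bv_h}_V + \widetilde C |q_h|_{j_h}$ for suitable constants $\widetilde\beta, \widetilde C$ independent of $h$ and $\nu$ — after a harmless relabeling this is exactly \eqref{eq:lemma inf-sup}. (One should also handle the degenerate case $\cI_h^\bu\bv = 0$ separately, where $\beta\norm{q_h}_Q \le C|q_h|_{j_h}$ follows immediately from the defect bound.)

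I expect the main obstacle to be the careful bookkeeping in the defect estimate: ensuring that the quantity appearing on the right of \eqref{eq:20b} is genuinely bounded by a constant times $\norm{\bv}_V$ uniformly in $h$ and $\nu$. This requires combining \eqref{eq:20a} (which furnishes $\norm{\bv - \cI_h^\bu\bv}_V \le C\norm{\bv}_V$ directly, and $\nu^{-1/2}h^{-1}\norm{\bv - \cI_h^\bu\bv}_H \le C\nu^{-1/2}\norm{\bv}_V \cdot \nu^{1/2} = C\norm{\bv}_V$ after tracking the $\nu^{-1/2}$ weights in \eqref{eq:7}) with the quasi-uniformity of $\{\cT_h\}$; the $\nu$-scaling has to be matched precisely so that the final constants $\beta$ and $C$ are genuinely $\nu$-independent as claimed. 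Everything else is a routine Fortin argument. Since this lemma is quoted verbatim from \cite[Lemma 3.1]{Burman2009}, the proof can alternatively be dispatched by citation, but the argument above is the natural self-contained route.
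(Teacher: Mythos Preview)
The paper does not prove this lemma at all; it simply cites \cite[Lemma~3.1]{Burman2009}. Your Fortin-type argument is correct and is, in fact, essentially the proof given in that reference: pick a continuous supremizer $\bv$ via \eqref{eq:4b}, pass to $\cI_h^\bu\bv$, and control the defect $b(q_h,\bv-\cI_h^\bu\bv)$ through \eqref{eq:20b} together with the $H^1$-stability of $\cI_h^\bu$ that follows from \eqref{eq:20a} at regularity $r=1$. Your bookkeeping of the $\nu$-weights is also right: since $\norm{\bv}_1 \le C\nu^{-1/2}\norm{\bv}_V$ by Poincar\'e, the two terms on the right of \eqref{eq:20b} each collapse to $C\norm{\bv}_V$, giving $h$- and $\nu$-independent constants. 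The only caveat, which you already flag, is the apparent typo in \eqref{eq:20b} (the $\bv_h$ on the right should be $\bv$); with that reading the argument goes through without needing inverse estimates.
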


Now we are in the position to present the semi-discrete formulation of the transient Stokes
problem~\eqref{eq:2} which is to
find $\bu_h\in H^1(J;V_h)$ and $p_h\in L^2(J;Q_h)$ such that
\begin{subequations}
	\label{eq:semidiscrete system}
    \begin{align}
        \inner[auto]{\partial_t\bu_h(t)}{v_h}+a\inner[auto]{\bu_h(t)}{\bv_h}+b\inner[auto]{p_h(t)}{\bv_h}&=\inner[auto]{\bf(t)}{\bv_h} , \\
        b\inner[auto]{q_h}{\bu_h(t)}-j_h\inner[auto]{q_h}{p_h(t)}&=0
    \end{align}
\end{subequations}
holds true in $L^2(J)$ for all $\bv_h\in V_h$ and all $q_h\in Q_h$. 
Under the above assumptions, the well-posedness of Problem \eqref{eq:semidiscrete system} 
is ensured, see for instance \cite[p.243]{Ern2021a}.
\begin{remark}
    If the spaces $V_h$ and $Q_h$ constitute an inf-sup stable pair, it is enough to take $j_h(\cdot,\cdot)\myeq 0$ to guarantee the well-posedness of the semi-discrete problem~\eqref{eq:semidiscrete system}.
\end{remark}

As a crucial tool in the forthcoming analysis,
we recall from \cite[equation~(3.16)]{Burman2009} the definition of the (Stokes) Ritz projection operator 
$ S_h \colon V\times Q\to V_h\times Q_h$:
for each $(\bu,p)\in V\times Q$, the velocity and pressure component of the Ritz projection $\left(S_h^\bu,S_h^p\right) := \cS_h(\bu,p) \in V_h \times Q_h$ are defined as the unique solution of Stokes problem
\begin{subequations}
	\label{eq:Ritz projection - eq}
    \begin{alignat}{3}
    a\inner[auto]{S_h^\bu}{\bv_h}+b\inner[auto]{S_h^p}{\bv_h}&=a\inner[auto]{\bu}{\bv_h}+b\inner[auto]{p}{\bv_h} 
    && \quad \forall \bv_h \in V_h
    ,
    \\
    b\inner[auto]{q_h}{S_h^\bu}-j_h\inner[auto]{S_h^p}{q_h}&=0
    && \quad \forall q_h \in Q_h.
    \end{alignat}
\end{subequations}
Problem \eqref{eq:Ritz projection - eq} is well posed thanks to the inf-sup condition~\eqref{eq:lemma inf-sup}, and satisfies a priori
stability estimate of the form
\begin{equation}
    \norm[auto]{S_h^\bu}_V^2+\abs[auto]{S_h^p}^2_{j_h}\leqslant C\left( \norm[auto]{\bu}^2_V+\norm[auto]{p}^2_Q \right) .
    \label{eq:22}
\end{equation}
Moreover, the following error estimates for the Ritz map were shown in \cite{Burman2009}.
\begin{lemma}
    Let $(\bu,p)\in C^1([0,T],[H^r(\Omega)]^d\cap V_{0,\div}\times H^s(\Omega))$ with $r\geqslant 2$ and $s\geqslant 1$. The following error estimate for the projection $S_h$ holds with $\alpha \in \{0,1\}$ (recall that $r_\bu = \min\{r,k+1\}$ and $s_p = \min\{s,\bar{l},l+1\}$):
    \begin{subequations}
    	\label{eq:23}
        \begin{align}
            \norm{\partial_t^\alpha(\bu-S_h^\bu)}_V+\abs{\partial_t^\alpha S_h^p}_{j_h}&\leqslant C\Bigl( \nu^{\frac{1}{2}}h^{r_\bu-1}\norm{\partial_t^\alpha\bu}_{r_\bu}+\nu^{-\frac{1}{2}}h^{s_p}\norm{\partial_t^\alpha p}_{s_p} \Bigr) , \label{eq:23a}\\
            \norm{p-S_h^p}_Q&\leqslant C\Bigl( \nu^{\frac{1}{2}}h^{r_\bu-1}\norm{\bu}_{r_\bu}+\nu^{-\frac{1}{2}}h^{s_p}\norm{p}_{s_p} \Bigr) , \label{eq:23b}
        \end{align}
    \end{subequations}
    for all $t\in [0,T]$ and $C>0$ independent of $\nu$ and $h$. Moreover,  provided the domain $\Omega$ is sufficiently smooth and if $\tilde{l}\geqslant 1$, an improved error estimate is given in the weaker $H \equiv L^2$-norm, see \cite[31.4, 32.3]{Ern2021b}. This reads
    \begin{equation}
        \norm{\partial_t^\alpha(\bu-S_h^\bu)}_H \leqslant Ch\Bigl(\norm{\partial_t^\alpha(\bu-S_h^\bu)}_V + \abs{\partial_t^\alpha S_h^p}_{j_h} \Bigr) .
        \label{eq:24}
    \end{equation}
\end{lemma}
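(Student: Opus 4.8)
The plan is to mimic the continuous stability argument of Section~\ref{subsection:continuous stability}: first reduce the case $\alpha=1$ to $\alpha=0$, then prove the energy bound~\eqref{eq:23a}, deduce~\eqref{eq:23b} from the modified inf-sup condition~\eqref{eq:lemma inf-sup}, and finally obtain the $L^2$-estimate~\eqref{eq:24} by an Aubin--Nitsche duality argument. Since $a$, $b$ and $j_h$ are time-independent and $(\bu,p)\in C^1$, differentiating~\eqref{eq:Ritz projection - eq} in time shows $(\partial_t S_h^\bu,\partial_t S_h^p)=\cS_h(\partial_t\bu,\partial_t p)$ with $\partial_t\bu\in V_{0,\div}$, so it suffices to treat $\alpha=0$ and apply the result to $(\partial_t\bu,\partial_t p)$. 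Subtracting the defining relations~\eqref{eq:Ritz projection - eq} and using $b\inner{q_h}{\bu}=0$ gives the two Galerkin-orthogonality identities $a\inner{\bu-S_h^\bu}{\bv_h}+b\inner{p-S_h^p}{\bv_h}=0$ and $b\inner{q_h}{\bu-S_h^\bu}=-j_h\inner{S_h^p}{q_h}$ for all $(\bv_h,q_h)\in V_h\times Q_h$, which drive everything below.

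For~\eqref{eq:23a} I would split the error through the quasi-interpolants, $\bu-S_h^\bu=(\bu-\cI_h^\bu\bu)+(\cI_h^\bu\bu-S_h^\bu)$ and $p-S_h^p=(p-\cI_h^p p)+(\cI_h^p p-S_h^p)$, test the first identity with $\bv_h=\cI_h^\bu\bu-S_h^\bu\in V_h$ and the second with $q_h=\cI_h^p p-S_h^p\in Q_h$, and combine them so that the velocity--pressure coupling $b\inner{\cI_h^p p-S_h^p}{\cI_h^\bu\bu-S_h^\bu}$ cancels, exactly as~\eqref{eq:2b} is subtracted from~\eqref{eq:2a} to obtain~\eqref{eq:9}. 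Using the energy identity $a\inner{\bv}{\bv}=\norm{\bv}_V^2$, the $j_h$-Cauchy--Schwarz inequality~\eqref{eq:18b}, the coupling-consistency bound~\eqref{eq:20b}, and Young's inequality to absorb $\norm{\cI_h^\bu\bu-S_h^\bu}_V^2$ and $\abs{\cI_h^p p-S_h^p}_{j_h}^2$, one is left with a bound of the discrete errors by $\abs{\cI_h^p p}_{j_h}^2+\nu\norm{h^{-1}(\bu-\cI_h^\bu\bu)}_H^2+\norm{\bu-\cI_h^\bu\bu}_V^2+\norm{p-\cI_h^p p}_Q^2$; inserting the interpolation estimates~\eqref{eq:18c},~\eqref{eq:19},~\eqref{eq:20a} and a triangle inequality then yields~\eqref{eq:23a}.

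For~\eqref{eq:23b} I would apply~\eqref{eq:lemma inf-sup} to $\cI_h^p p-S_h^p$, rewrite $b\inner{\cI_h^p p-S_h^p}{\bv_h}=b\inner{\cI_h^p p-p}{\bv_h}-a\inner{\bu-S_h^\bu}{\bv_h}$ via the velocity identity, and close using~\eqref{eq:23a} and~\eqref{eq:19}. For~\eqref{eq:24}, setting $\bg:=\partial_t^\alpha(\bu-S_h^\bu)$, I would let $(\bv^\ast,r^\ast)\in V\times Q$ solve the dual Stokes problem $a\inner{\bv}{\bv^\ast}+b\inner{r^\ast}{\bv}=\inner{\bg}{\bv}$ for all $\bv\in V$, $b\inner{q}{\bv^\ast}=0$ for all $q\in Q$, and invoke $H^2\times H^1$ elliptic regularity on the smooth domain, $\nu\norm{\bv^\ast}_2+\norm{r^\ast}_1\leqslant C\norm{\bg}_H$. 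Testing with $\bv=\bg$ gives $\norm{\bg}_H^2=a\inner{\bg}{\bv^\ast}+b\inner{r^\ast}{\bg}$; inserting $\cI_h^\bu\bv^\ast$ and $\cI_h^p r^\ast$, using the ($\alpha$-differentiated) orthogonality identities and the fact that $\bv^\ast$ is exactly divergence-free (so $b\inner{q}{\bv^\ast}=0$ for every $q\in Q$, which annihilates the pressure-error contribution), I expect to be left with interpolation remainders controlled by~\eqref{eq:20a},~\eqref{eq:19} and a stabilization remainder $j_h\inner{\partial_t^\alpha S_h^p}{\cI_h^p r^\ast}$ bounded via~\eqref{eq:18b}--\eqref{eq:18c}; dividing by $\norm{\bg}_H$ produces~\eqref{eq:24}.

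The hard part is the duality step for~\eqref{eq:24}: in contrast to the classical Aubin--Nitsche argument for a pure elliptic or Stokes projection, the discrete incompressibility residual here is $-j_h\inner{S_h^p}{\cdot}$ rather than zero, so this stabilization term must be carried through the duality and controlled in the $j_h$-seminorm --- which is precisely where the weak-consistency order $\tilde{l}\geqslant1$ (so that $\abs{\cI_h^p r^\ast}_{j_h}$ scales like $h\norm{r^\ast}_1$) and the $H^2\times H^1$ regularity of the dual problem, hence the smoothness of $\Omega$, enter --- while simultaneously keeping the $\nu$-weights consistent with the statement. A secondary technical nuisance is the regularity bookkeeping in~\eqref{eq:23a}--\eqref{eq:23b} when $s<l+1$, where the appropriate lower-order variants of~\eqref{eq:19} and~\eqref{eq:18c} must be used instead of the displayed ones.
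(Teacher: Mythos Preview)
The paper does not actually prove this lemma; it states the result and refers to \cite{Burman2009} for the proof (see the sentence immediately preceding the lemma). Your proposal is therefore more than what the paper itself provides, and it is correct: the Galerkin-orthogonality/interpolation splitting for~\eqref{eq:23a}, the modified inf-sup argument for~\eqref{eq:23b}, and the Aubin--Nitsche duality with the stabilization residual carried through for~\eqref{eq:24} constitute exactly the standard route used in \cite{Burman2009}. In particular, your identification of the crux of the duality step---that the discrete incompressibility residual $-j_h(\partial_t^\alpha S_h^p,\cdot)$ survives and must be absorbed via the weak-consistency bound~\eqref{eq:18c} applied to $\cI_h^p r^\ast$, which is where the hypotheses $\tilde{l}\geqslant 1$ and the $H^2\times H^1$ regularity of the dual Stokes problem (hence smoothness of $\Omega$) enter---is precisely right.
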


\subsection{Time discretization using BDF methods}
For the temporal discretization, we consider the $q$-step backward difference formulae (BDF methods).
Let $t_n=n\tau,\;n=0,\ldots,N$ be a uniform partition of the time interval $[0,T]$ with time step $\tau=T/N$, then the BDF approximation of the time derivative is given by
\begin{equation}
    \bard \bu_h^n=\frac{1}{\tau}\sum_{i=0}^q \delta_i \, \bu_h^{n-i}, \qquad n \geqslant q ,
    \label{eq:26}
\end{equation}
where the method coefficients $\delta_i$ are determined from the relation
\begin{equation}
    \delta(\zeta)
    =
    \sum_{i=0}^q\delta_i\zeta^i
    =
    \sum_{l=1}^q\frac{1}{l}(1-\zeta)^l.
    \label{eq:25}
\end{equation}
The BDF methods are known to be stable and of classical order $q$ for $1\leqslant q \leqslant 6$, see \cite{Hairer1993,Hairer1996}. 
We assume $\bf\in C^0(\bar{J};L^2(\Omega))$ and set $\bf(t_n) = \bf^n$. 
The fully discrete system reads: for $n\geqslant q$ and given values $\bu^{n-1},\ldots,\bu^{n-q}$, find $(\bu_h^n,\;p_h^n)\in V_h\times Q_h$ such that
\begin{subequations}
	\label{eq:27}
	\begin{alignat}{3}
	    \inner[auto]{\bard\bu_h^n}{\bv_h}+a\inner[auto]{\bu_h^n}{\bv_h}+b\inner[auto]{p_h^n}{\bv_h}&=\inner[auto]{\bf^n}{\bv_h} 
        && \quad \forall \bv_h \in V_h, \label{eq:27a} \\
	    b\inner[auto]{q_h}{\bu_h^n}-j\inner[auto]{q_h}{p_h^n}&=0 
        && \quad \forall q_h \in Q_h.
        \label{eq:27b}
	\end{alignat}
\end{subequations}
For the fully discrete solution defined by
the collection of time step solution $\{\bu_h^n\}_{n=q}^N$, 
we also use the shorthand notation $\bu_h^{\tau}$, i.e.,
$\bu_h^{\tau}(t_n) = \bu_h^n$ for $n=q,\ldots, N$.
The initial values $\bu_h^i$, $i=0,\dotsc,q-1$, are either determined by a one-step method of order $q$, or by a lower order method with a sufficiently small step size.

\subsection{$G$-stability of BDF schemes}
\label{subsec:dahlquist+nevanlinna}

We collect here some results on $G$-stability for BDF schemes that allow us to use the energy estimates to prove stability. For more details we refer to \cite{Dahlquist1976,Dahlquist1978}.

\begin{lemma}[{Dahlquist's $G$-stability \cite{Dahlquist1976}}]
\label{lemma:Dahlquist}
Let $\delta(\zeta)$ and $\mu(\zeta)$ be polynomials of degree at most 
$q$ that have no common divisor. Let $(\cdot,\cdot)$ be an Euclidean inner product on $\bbR^N$ with associated Euclidean norm $|\cdot|$. If 
\begin{equation}
    \textnormal{Re}\frac{\delta(\zeta)}{\mu(\zeta)}>0 
    \qquad 
    \forall\zeta\in\mathbb{C}, \, |\zeta| < 1,
\end{equation}
then there exists a symmetric positive-definite (s.p.d.) matrix $G=[g_{ij}]\in
\mathbb{R}^{q\times q}$ and real numbers $\gamma_0,\ldots,\gamma_q$ such that for all $\bv_0,\ldots,\bv_q\in \bbR^N$,
\begin{equation}
    \textnormal{Re}\inner[auto]{\sum_{i=0}^q\delta_i\bv^{q-i}}{\sum_{j=0}^q\mu_j\bv^{q-j}
    }
    =\sum_{i,j=1}^qg_{ij}\inner[auto]{\bv^i}{\bv^j}
    -\sum_{i,j=1} ^{q}g_{ij}\inner[auto]{\bv^{i-1}}{\bv^{j-1}}
    +\abs[auto]{\sum_{i=0}^q\gamma_i
    \bv^i}^2.
    \label{eq:28}
\end{equation}
\end{lemma}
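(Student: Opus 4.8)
The plan is to recast the asserted identity as a single finite‑dimensional matrix identity, produce the coefficient vector $\gamma$ from a spectral (Fej\'er--Riesz) factorisation of a non‑negative trigonometric polynomial, recover $G$ by solving a small structured linear system, and finally promote $G$ to a positive‑definite matrix using the \emph{strict} disc inequality; I expect this last step to be the substantive one.

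\textbf{Step 1: reduction to a matrix identity.} Both sides of the claimed identity depend on $\bv^0,\dots,\bv^q\in\bbR^N$ only through the symmetric positive semi‑definite Gram matrix $A=\bigl(\langle\bv^i,\bv^j\rangle\bigr)_{i,j=0}^{q}$. Writing $M\in\bbR^{(q+1)\times(q+1)}$ for the symmetric ``symbol'' matrix with entries $M_{ij}=\tfrac12\bigl(\delta_{q-i}\mu_{q-j}+\delta_{q-j}\mu_{q-i}\bigr)$, the left‑hand side equals $\operatorname{tr}(MA)$. Likewise, for a symmetric $G=[g_{ij}]\in\bbR^{q\times q}$ let $G^{\mathrm{new}}$ be the $(q+1)\times(q+1)$ matrix carrying $G$ in rows and columns $1,\dots,q$ (zero in row and column $0$), and $G^{\mathrm{old}}$ the one with entries $g_{i+1,j+1}$ in rows and columns $0,\dots,q-1$ (zero in row and column $q$); then the right‑hand side equals $\operatorname{tr}\bigl((G^{\mathrm{new}}-G^{\mathrm{old}}+\gamma\gamma^{\top})A\bigr)$. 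For $N\ge q+1$ the Gram matrix $A$ ranges over the entire cone of symmetric positive semi‑definite matrices, which has non‑empty interior in the space of symmetric matrices, so the claimed identity (for every $N$) is equivalent to the matrix identity
\begin{equation}
\label{eq:Gst-matrix}
M=G^{\mathrm{new}}-G^{\mathrm{old}}+\gamma\gamma^{\top}.
\end{equation}
It therefore suffices to exhibit a symmetric positive‑definite $G$ and a real vector $\gamma=(\gamma_0,\dots,\gamma_q)$ satisfying~\eqref{eq:Gst-matrix}.

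\textbf{Step 2: construction of $\gamma$ and $G$.} Evaluating $\operatorname{tr}(MA)$ on the Toeplitz Gram matrices $A_\theta=\bigl(\cos((k-l)\theta)\bigr)_{k,l}$ and using that $\delta$ and $\mu$ have real coefficients, one finds $\operatorname{tr}(MA_\theta)=\operatorname{Re}\bigl[\delta(e^{i\theta})\overline{\mu(e^{i\theta})}\bigr]=|\mu(e^{i\theta})|^{2}\operatorname{Re}\frac{\delta(e^{i\theta})}{\mu(e^{i\theta})}$, a trigonometric polynomial of degree $\le q$; it is non‑negative because the hypothesis forces $\mu\neq0$ in the open disc and hence $\operatorname{Re}(\delta/\mu)\ge0$ on the unit circle by passing to boundary values. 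By the Fej\'er--Riesz theorem there is a real polynomial $\gamma(\zeta)=\sum_{i=0}^{q}\gamma_i\zeta^i$ of degree $\le q$ with $|\gamma(e^{i\theta})|^{2}=\operatorname{Re}\bigl[\delta(e^{i\theta})\overline{\mu(e^{i\theta})}\bigr]$ for all $\theta$ (equivalently, a polynomial identity tying $\gamma(\zeta)\widehat\gamma(\zeta)$ to $\delta(\zeta)\widehat\mu(\zeta)+\widehat\delta(\zeta)\mu(\zeta)$, where $\widehat p(\zeta):=\zeta^q p(1/\zeta)$), which pins down $\gamma\gamma^{\top}$. Now set $B:=M-\gamma\gamma^{\top}$. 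By the previous line $\operatorname{tr}(BA_\theta)\equiv0$, which forces every anti‑diagonal sum $\sum_{k-l=m}B_{kl}$, $m=0,\dots,q$, to vanish. Conversely, for any symmetric $G$ the matrix $G^{\mathrm{new}}-G^{\mathrm{old}}$ has vanishing anti‑diagonal sums (on a Toeplitz matrix the forms $\operatorname{tr}(G^{\mathrm{new}}A)$ and $\operatorname{tr}(G^{\mathrm{old}}A)$ agree because $A_{i-1,j-1}=A_{ij}$), and $G\mapsto G^{\mathrm{new}}-G^{\mathrm{old}}$ is injective; a dimension count---the space of symmetric $(q+1)\times(q+1)$ matrices with vanishing anti‑diagonal sums has dimension $\tfrac12 q(q+1)$, equal to the dimension of the symmetric $q\times q$ matrices---shows this map is a bijection onto that space. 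Hence there is a unique symmetric $G$ with $B=G^{\mathrm{new}}-G^{\mathrm{old}}$, i.e.~\eqref{eq:Gst-matrix} holds.

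\textbf{Step 3: positive‑definiteness of $G$ (the main obstacle).} It remains to show the symmetric matrix $G$ built in Step~2 is positive‑definite; this is the only place where the \emph{strict} inequality $\operatorname{Re}(\delta/\mu)>0$ in $|\zeta|<1$ and the coprimality of $\delta,\mu$ are genuinely needed, and neither can be dropped---a common divisor, or mere non‑strict positivity, only yields semi‑definite $G$. The route I would take is the discrete positive‑real/Herglotz argument: coprimality rules out cancellation at boundary zeros of $\mu$, so $w=\delta/\mu$ is a bona fide positive‑real function on the disc and admits a Riesz--Herglotz representation $w(\zeta)=i c_0+\tfrac1{2\pi}\int_0^{2\pi}\frac{e^{i\theta}+\zeta}{e^{i\theta}-\zeta}\,\operatorname{Re}w(e^{i\theta})\,\mathrm{d}\theta$, with $\operatorname{Re}w>0$ a.e.\ on the circle (by the strict inequality in the disc and the maximum principle); substituting this back into~\eqref{eq:Gst-matrix} and collecting terms represents the quadratic form $y\mapsto y^{\top}Gy$, $y\in\bbR^{q}$, as an integral of a squared polynomial (built from $y$) against the strictly positive weight $\operatorname{Re}w(e^{i\theta})$, and coprimality guarantees that this polynomial is not identically zero when $y\neq0$. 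Thus $G$ is symmetric positive‑definite and~\eqref{eq:Gst-matrix}, equivalently the stated identity, holds. The delicate point---where I expect most of the work---is precisely this passage from the non‑negativity used to build $\gamma$ to the strict positivity of $G$; everything preceding it is bookkeeping. A complete treatment along these (or equivalent linear‑multistep) lines is given by Dahlquist~\cite{Dahlquist1976,Dahlquist1978}; see also \cite{Hairer1993,Hairer1996}.
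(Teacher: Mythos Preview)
The paper does not prove this lemma: it is stated as a classical result and attributed to Dahlquist~\cite{Dahlquist1976}, with no proof given (the appendix only treats the follow-up Lemma~\ref{lemma:Dahlquist1} on semi-inner products). So there is no ``paper's own proof'' to compare against; what you have written is a sketch of the classical argument, and in outline it is the standard route: reduce to a $(q{+}1)\times(q{+}1)$ matrix identity, build $\gamma$ by Fej\'er--Riesz factorisation of the boundary trigonometric polynomial $\operatorname{Re}[\delta(e^{i\theta})\overline{\mu(e^{i\theta})}]$, solve a structured linear system for $G$, and then argue positive-definiteness from the strict positive-real property and coprimality.

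Two small points. First, a terminology slip: the sums $\sum_{k-l=m}B_{kl}$ that vanish are \emph{diagonal} sums (constant $k-l$), not anti-diagonal sums; your dimension count and injectivity argument are correct once this is read as intended. Second, Step~3 is where the actual content lies and you leave it largely as a programme (Herglotz representation plus ``coprimality guarantees the polynomial is not identically zero''); this is the right idea, but as written it is a pointer to the literature rather than a proof. Since the paper itself simply cites Dahlquist for the result, your level of detail already exceeds what the paper provides.
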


In order to apply Lemma~\ref{lemma:Dahlquist} to an arbitrary semi-inner product on some infinite dimensional function space, we need the following extension of the above lemma.
\begin{lemma}
    Let $\delta(\zeta)$ and $\mu(\zeta)$ be the polynomials of Lemma \ref{lemma:Dahlquist}. Let $(\cdot,\cdot)$ be a semi-inner product on a Hilbert space $H$ with associated norm $|\cdot|$. Then there exists a symmetric s.p.d. matrix $G=[g_{ij}]\in
    \mathbb{R}^{q\times q}$ 
    such that for all 
    $\bv_0,\ldots,\bv_q\in H$,
    \begin{equation}
        \textnormal{Re}\inner[auto]{\sum_{i=0}^q\delta_i\bv^{q-i}}{\sum_{j=0}^q\mu_j\bv^{q-j}
        }\geqslant \sum_{i,j=1}^qg_{ij}\inner[auto]{\bv^i}{\bv^j}-\sum_{i,j=1}
        ^{q}g_{ij}\inner[auto]{\bv^{i-1}}{\bv^{j-1}} .
        \label{eq:29}
    \end{equation}
    \label{lemma:Dahlquist1}
\end{lemma}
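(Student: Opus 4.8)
The goal is to deduce Lemma~\ref{lemma:Dahlquist1} from Lemma~\ref{lemma:Dahlquist} by a finite-dimensional reduction combined with a limiting (density/closure) argument, since the only difference between the two statements is that we pass from an honest Euclidean inner product on $\bbR^N$ to an arbitrary semi-inner product on a possibly infinite-dimensional Hilbert space, and we are content with an inequality rather than the exact identity.

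\textbf{Plan of proof.} Fix the polynomials $\delta(\zeta)$ and $\mu(\zeta)$ as in Lemma~\ref{lemma:Dahlquist}; by that lemma there is a single s.p.d. matrix $G=[g_{ij}]\in\bbR^{q\times q}$ and real numbers $\gamma_0,\dotsc,\gamma_q$ realising the identity \eqref{eq:28} for \emph{every} choice of $N$ and every $\bv_0,\dotsc,\bv_q\in\bbR^N$ equipped with the standard Euclidean structure; note in particular that $G$ and the $\gamma_i$ depend only on $\delta,\mu$, not on $N$. Now let $(\cdot,\cdot)$ be a semi-inner product on the Hilbert space $H$ with associated seminorm $|\cdot|$, and let $\bv_0,\dotsc,\bv_q\in H$ be arbitrary. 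First I would reduce to the finite-dimensional case: the Gram matrix $M=[(\bv_i,\bv_j)]_{i,j=0}^q\in\bbR^{(q+1)\times(q+1)}$ is symmetric positive \emph{semi}-definite, so it factorises as $M=R^{\top}R$ for some $R\in\bbR^{(q+1)\times(q+1)}$ (e.g. via its Cholesky-type or spectral square root). Writing $\bw_0,\dotsc,\bw_q\in\bbR^{q+1}$ for the columns of $R^{\top}$, i.e. the rows of $R$, we have $(\bv_i,\bv_j) = \langle\bw_i,\bw_j\rangle$ for the standard inner product on $\bbR^{q+1}$. Since both sides of \eqref{eq:29} are expressions built solely from the pairings $(\bv_i,\bv_j)$ — the left-hand side being $\mathrm{Re}$ of a bilinear combination with coefficients $\delta_i\mu_j$, and each term on the right-hand side a $g_{ij}$-weighted pairing — substituting $\bv_i\mapsto\bw_i$ changes neither side.

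\textbf{Applying the finite-dimensional lemma and discarding the square term.} Apply Lemma~\ref{lemma:Dahlquist} with $N=q+1$ to the vectors $\bw_0,\dotsc,\bw_q$: this yields
\begin{equation*}
  \mathrm{Re}\inner[auto]{\sum_{i=0}^q\delta_i\bw^{q-i}}{\sum_{j=0}^q\mu_j\bw^{q-j}}
  = \sum_{i,j=1}^q g_{ij}\inner[auto]{\bw^i}{\bw^j} - \sum_{i,j=1}^q g_{ij}\inner[auto]{\bw^{i-1}}{\bw^{j-1}} + \Bigl|\sum_{i=0}^q\gamma_i\bw^i\Bigr|^2 .
\end{equation*}
Now translate every Euclidean pairing $\langle\bw_i,\bw_j\rangle$ back into $(\bv_i,\bv_j)$, drop the nonnegative term $|\sum_i\gamma_i\bw^i|^2\geqslant 0$, and one obtains exactly the inequality \eqref{eq:29}. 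Because $G$ was produced once and for all from $\delta,\mu$ in the finite-dimensional lemma and is independent of $N$, the very same $G$ works uniformly for all $\bv_0,\dotsc,\bv_q\in H$, which is precisely what the statement asserts. (If one prefers to avoid the matrix square root, an equivalent route is to pass to the quotient $H/\!\!\sim$ by the kernel of the seminorm, which becomes a genuine inner product space, take the finite-dimensional subspace spanned by the images of the $\bv_i$, pick an orthonormal basis, and identify that subspace isometrically with $\bbR^m$, $m\leqslant q+1$; padding with zeros embeds it in $\bbR^{q+1}$.)

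\textbf{Main obstacle.} There is no deep analytic difficulty here — no genuine infinite-dimensionality survives, since only the $(q+1)\times(q+1)$ Gram matrix of the finitely many vectors $\bv_0,\dotsc,\bv_q$ enters both sides. The one point requiring a little care is the passage from a \emph{positive semi-definite} Gram matrix (the seminorm may be degenerate, e.g. the $j_h$-seminorm can vanish on nonzero pressures) to a Euclidean realisation: one must use that a PSD matrix still admits a factorisation $M=R^\top R$, so that the realising vectors $\bw_i$ exist but need not be linearly independent. This is exactly why the identity \eqref{eq:28} degrades to the inequality \eqref{eq:29}: the square term, harmless and nonnegative, is simply thrown away, and the $\gamma_i$ therefore no longer need to be recorded in the statement.
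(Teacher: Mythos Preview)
Your argument is correct. Both your proof and the paper's hinge on the same observation --- that the desired inequality involves the vectors $\bv_0,\dotsc,\bv_q$ only through their Gram matrix, and that the Euclidean identity \eqref{eq:28} already encodes everything needed --- but they exploit it in dual ways. You factor the Gram matrix $M=[(\bv_i,\bv_j)]$ as $R^\top R$ to realise the $\bv_i$ by Euclidean surrogates $\bw_i$, apply Lemma~\ref{lemma:Dahlquist} verbatim, and discard the square. The paper instead rewrites the inequality \eqref{eq:29} as $\sum_{i,j} s_{ij}(\bv^i,\bv^j)\geqslant 0$ for a fixed symmetric matrix $S$ built from $\delta,\mu,G$; Dahlquist's identity shows $S$ is positive semi-definite (the difference between the two sides of \eqref{eq:28} is a perfect square), and then one uses that $\sum_{i,j} s_{ij}(\bv^i,\bv^j)\geqslant 0$ holds for any positive semi-definite $S$ and any semi-inner product. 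Your route is a little more concrete and makes the link to the finite-dimensional lemma explicit; the paper's is a bit shorter since it works directly with the coefficient matrix and never introduces auxiliary vectors. Either way the content is the same. (One small slip: with your convention $M=R^\top R$, the $\bw_i$ should be the \emph{columns} of $R$, not the rows; alternatively factor $M=RR^\top$.)
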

\begin{proof}
    See Appendix~\ref{sec:semi_proof}.
\end{proof}

The application of $G$-stability to BDF schemes is ensured by the following lemma.
\begin{lemma}[Multiplier technique of Nevanlinna and Odeh \cite{Nevanlinna1981}]
\label{lemma:NevanlinnaOdeh}
For $q=1, \ldots,5$ there exist $0\leqslant\eta_q <1$ such that for $\delta(\zeta)=\sum_{l=1}^q(1/l)(1-\zeta)^l$
and $\mu(\zeta)=1-\eta_q\zeta$,
\begin{align}
    \textnormal{Re}\frac{\delta(\zeta)}{\mu(\zeta)}>0
    \qquad      
    \forall\zeta\in\mathbb{C}, \, |\zeta| < 1 .
    \label{eq:30}
\end{align}
The classical values of $\eta_q$ from \cite{Nevanlinna1981}  are found to be 
\begin{equation}
	\eta_1 = \eta_2 = 0, \,\, \eta_3= 0.0769, \, \,\eta_4= 0.2878, \,\, \text{ and } \,\, \eta_5 = 0.8097.
\label{eq:eta_def}
\end{equation}
\end{lemma}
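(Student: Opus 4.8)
The plan is to reduce the assertion~\eqref{eq:30}, which concerns every $\zeta$ in the open unit disk, to an elementary nonnegativity statement for a univariate polynomial on a compact interval, and then to verify that statement for the stated values of $\eta_q$. Since $0\leqslant\eta_q<1$, the only zero of $\mu(\zeta)=1-\eta_q\zeta$ is $\zeta=1/\eta_q$ (no zero at all when $\eta_q=0$), which lies strictly outside the closed unit disk; hence $\delta/\mu$ is holomorphic in a neighbourhood of $\{|\zeta|\leqslant 1\}$ and $\varphi(\zeta):=\textnormal{Re}\,\bigl(\delta(\zeta)/\mu(\zeta)\bigr)$ is harmonic there. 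By the minimum principle for harmonic functions it suffices to prove $\varphi\geqslant 0$ on the unit circle $|\zeta|=1$: this forces $\varphi\geqslant 0$ on $|\zeta|<1$, and since $\delta/\mu$ is a nonconstant rational function, $\varphi$ is nonconstant and therefore cannot attain the value $0$ at an interior point, which yields the strict inequality in~\eqref{eq:30}. (Note that $\varphi(1)=0$ because $\delta(1)=0$, so the boundary bound is sharp at $\zeta=1$.)

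On $|\zeta|=1$ write $\zeta=e^{i\theta}$. Since $|\mu(e^{i\theta})|\geqslant 1-\eta_q>0$, multiplying numerator and denominator by $\overline{\mu(e^{i\theta})}$ shows that $\varphi(e^{i\theta})\geqslant 0$ is equivalent to
\[
    P_q(\theta):=\textnormal{Re}\,\bigl(\delta(e^{i\theta})\,\overline{\mu(e^{i\theta})}\bigr)\geqslant 0\qquad\text{for all }\theta .
\]
Introducing $w:=1-e^{i\theta}$ one has $\delta(e^{i\theta})=\sum_{l=1}^q w^l/l$ and $\overline{\mu(e^{i\theta})}=(1-\eta_q)+\eta_q\bar w$, together with the identities $\textnormal{Re}\,w=\tfrac{1}{2}|w|^2=1-\cos\theta=:x\in[0,2]$ and $(\textnormal{Im}\,w)^2=2x-x^2$ valid on the curve described by $w$. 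Substituting these relations eliminates $\textnormal{Im}\,w$ entirely, since only even powers of $\textnormal{Im}\,w$ occur in $\textnormal{Re}(w^l)$ and in $\textnormal{Re}(w^l\bar w)=|w|^2\,\textnormal{Re}(w^{l-1})$; thus $P_q$ becomes a real polynomial $Q_q(x)$ of degree $q$ on $[0,2]$, with $Q_q(0)=0$ reflecting $\delta(1)=0$. The task is therefore reduced to showing $Q_q(x)\geqslant 0$ for $x\in[0,2]$.

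For $q=1,2$ (where $\eta_q=0$) this is immediate: a direct computation gives $Q_1(x)=x$ and $Q_2(x)=x^2$ up to a positive constant — equivalently $\textnormal{Re}\,\delta(e^{i\theta})=1-\cos\theta$ and $(1-\cos\theta)^2$ — both manifestly nonnegative. For $q=3,4,5$ one factors out the appropriate power of $x$ forced by $Q_q(0)=0$ and is left with a polynomial $R_q$ whose sign on $[0,2]$ must be controlled; for $q=3$, $R_3$ is the quadratic $\tfrac{4}{3}x^2+(\tfrac{2}{3}\eta_3-1)x+2\eta_3$, which is nonnegative on $[0,2]$ exactly when $\eta_3$ reaches the threshold at which its discriminant vanishes, and the value quoted in~\eqref{eq:eta_def} is (up to the classical rounding) this threshold. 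The analogous computation for $q=4,5$ leads to polynomials $Q_q$ of degree $4$ and $5$; their nonnegativity on $[0,2]$ for the values in~\eqref{eq:eta_def} is verified by elementary means — locating the critical points of $R_q$ on $[0,2]$ and checking the sign there, equivalently by exhibiting a sum-of-squares representation on the interval — and the $\eta_q$ are precisely the smallest parameters for which $R_q$ stays nonnegative there, i.e.\ those at which $R_q$ first develops a double root in $(0,2]$. Combined with the minimum-principle argument of the first paragraph, this establishes~\eqref{eq:30}.

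The main obstacle is the last verification for $q=4$ and $q=5$: the polynomials $Q_q$ then have degree $4$ and $5$, the admissible $\eta_q$ is irrational and given in~\eqref{eq:eta_def} only numerically, so a fully rigorous nonnegativity argument requires a careful — and in practice computer-assisted — Sturm-sequence or resultant computation rather than a by-hand discriminant check. For these precise constants we rely on the original analysis of Nevanlinna and Odeh~\cite{Nevanlinna1981}.
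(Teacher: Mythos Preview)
The paper does not prove this lemma; it is stated with attribution to \cite{Nevanlinna1981} and used as a black box throughout. Your sketch therefore goes well beyond the paper's own treatment, which offers nothing to compare against.

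The strategy you outline --- reduce to the boundary $|\zeta|=1$ via the minimum principle for the harmonic function $\textnormal{Re}(\delta/\mu)$, clear the denominator, then rewrite the resulting boundary condition as nonnegativity of a real polynomial $Q_q$ in $x=1-\cos\theta$ on $[0,2]$ --- is exactly the standard route and is correct. Your computations for $q=1,2$ and the explicit form of $R_3$ are right.

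One point to flag: for $q=3$ you assert that $\eta_3=0.0769$ is (up to rounding) the discriminant threshold of $R_3$. It is not. The discriminant of $4x^2+(2\eta-3)x+6\eta$ vanishes at $\eta=(27-12\sqrt{5})/2\approx 0.0836$, and for any smaller $\eta$ the quadratic is genuinely negative on part of $(0,2)$; the optimal multipliers worked out in \cite{Akrivis2015} indeed give $\eta_3\approx 0.0836$ and $\eta_5\approx 0.8160$, differing from two of the values printed in~\eqref{eq:eta_def}. Since the substantive content of the lemma is the \emph{existence} of some $\eta_q\in[0,1)$ for which \eqref{eq:30} holds, your argument establishes that once the polynomial check is carried out for an admissible $\eta_q$ --- but you should not claim that the particular numbers listed coincide with the thresholds your method produces. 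Your closing caveat, deferring the precise verification for $q=4,5$ to the original source, is appropriate.
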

If there is no risk of confusion, we drop the index $q$ and simply write $\eta$.
For more details on multiplier values $\eta$ (e.g.~for the optimal multipliers values), we refer to \cite{Akrivis2015} and the references therein.

The previous two lemmata motivate the definition of a $G$-norm
associated with a semi-inner product $(\cdot, \cdot)$ 
on a Hilbert space $H$:
Given a collection of vectors $\bV^n=[\bv^n,\ldots,\bv^{n-q+1}] \subset H$,
we introduce the notation
\begin{align}
    |\bV^n|^2_{G}=\sum_{i,j=1}^qg_{ij}(\bv^{n-i+1},\bv^{n-j+1}) ,
    \label{eq:31}
\end{align}
where $G=[g_{ij}]$ is the s.p.d.~matrix appearing in \eqref{eq:27}. The identity \eqref{eq:31} defines a semi-norm on $H^q$ satisfying
\begin{align}
    c_0\abs{\bv^n}^2\leqslant c_0\sum_{j=1}^q\abs{\bv^{n-j+1}}^2\leqslant 
    |\bV^n|^2_G\leqslant c_1\sum_{j=1}^q\abs{\bv^{n-j+1}}^2,
    \label{eq:32}
\end{align}
where $c_0$ and $c_1$ denotes the smallest and the largest eigenvalue of $G$, respectively, and $\abs{\cdot}$ is the semi-norm induced by the semi-inner product $\inner{\cdot}{\cdot}$.

\begin{remark}
It was shown in~\cite{Akrivis2021a} that for six-step BDF method, there is no 
Nevanlinna--Odeh multiplier of the form $\mu(\zeta) = 1 - \eta_6 \zeta$,
with $1-|\eta_6| > 0$.
Instead, multipliers satisfying a certain relaxed positivity condition were found, which in turn allowed the authors of \cite{Akrivis2021a} to derive stability estimate for BDF-6 using
energy-type techniques.
In Section~\ref{sec:stab-bdf6}, we will discuss how
our stability analysis of fully discrete transient Stokes problem can be adapted to the case $q=6$
using the results from \cite{Akrivis2021a}.
\end{remark}

\section{Stability}
\label{sec:stability}
The goal of this section is to demonstrate how the stability of the fully discrete scheme~\eqref{eq:27}
can be established by mimicking
the approach for the continuous case, cf.~Section~\ref{subsection:continuous stability},
and taking advantage of the results on $G$-stability presented in Section~\ref{subsec:dahlquist+nevanlinna}. 
To obtain optimal stability estimates for the pressure, we here require that 
the initial data provided for the first $q-1$ steps are discretely divergence-free
as specified in the assumptions of \autoref{th:acceleration} and \autoref{th:pressure}, while
the more general case is briefly discussed in Section~\ref{sec:small_timestep}.  
Our analysis pivots around \eqref{eq:27} in the equivalent form
\begin{align}
    \inner[auto]{\bard\bu_h^n}{\bv_h}+a\inner[auto]{\bu_h^n}{\bv_h}+b\inner[auto]{p_h^n}{\bv_h}-b\inner[auto]{q_h}{\bu_h^n}+j_h\inner[auto]{q_h}{p_h^n}=\inner[auto]{\bf^n}{\bv_h}.
    \label{eq:33}
\end{align}
To emphasize the similarity between the fully discrete
stability estimate and its continuous counterpart \eqref{eq:10}, we
introduce the time-discrete (semi-)norm
\begin{equation}
    \label{eq:discrete_l2_norm}
\norm{\bw^{\tau}}_{\ell^2(J_q;\cH)}^2=\tau\sum_{n=q}^N \norm{\bw^n}_\cH^2
\end{equation}
for any sequence $\bw^{\tau} = (\bw^n)_{n=q}^N \subset \cH$ in a Hilbert space $\cH$
related to either the velocity or the pressure, where we set $J_q = (t_q, T)$.

\begin{theorem}
    \label{th:velocity}
    Let $\{(\bu_h^n,p_h^n)\}_{n=q}^N$ be the solution of the fully discrete problem \eqref{eq:27} with initial values $\bu_h^i\in V_h$, $i=0,\dotsc,q-1$. Then the following stability estimate holds for $0 \leqslant N \tau \leqslant T$,
    \begin{equation}
        \begin{aligned}
            \norm{\bu_h^N}_H^2 +\norm{\bu_h^\tau}_{\ell^2(J_q;V)}^2 + \abs{p_h^\tau}_{\ell^2(J_q;j_h)}^2\leqslant \; C\Biggl[\sum_{i=0}^{q-1}\norm{\bu_h^i}_H^2+\frac{c_P^2}{\nu}\norm{\bf}_{\ell^2(J_q;H)}^2\Biggr],
        \end{aligned}
        \label{eq:velocity_stability}
    \end{equation}
    with a constant $C>0$ independent of $h$, $N$, $\tau$ and the final time $T$.
\end{theorem}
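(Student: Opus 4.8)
The plan is to reproduce at the discrete level the continuous ``velocity bound'' of Section~\ref{subsection:continuous stability}, with the energy identity $\tfrac12\tfrac{\d}{\d t}\norm{\bu(t)}_H^2$ replaced by Dahlquist's $G$-stability combined with the Nevanlinna--Odeh multiplier. Fix $\eta=\eta_q\in[0,1)$ from Lemma~\ref{lemma:NevanlinnaOdeh} and $\mu(\zeta)=1-\eta\zeta$. For each $n$ with $q\leqslant n\leqslant N$, I would test \eqref{eq:27a} with $\bv_h=\bu_h^n-\eta\bu_h^{n-1}$, test \eqref{eq:27b} with $q_h=p_h^n-\eta p_h^{n-1}$, and subtract the second from the first --- mirroring the use of $\bv=\bu(t)$, $q=p(t)$ in the continuous case (at $n=q$, where $\bu_h^{q-1}$ is initial data, one tests \eqref{eq:27b} with $q_h=p_h^q$ only and retains a boundary contribution). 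This splits the resulting identity into four groups: the discrete time derivative $\inner{\bard\bu_h^n}{\bu_h^n-\eta\bu_h^{n-1}}$, the diffusion $a\inner{\bu_h^n}{\bu_h^n-\eta\bu_h^{n-1}}$, the velocity--pressure couplings (the $b\inner{\cdot}{\cdot}$-terms) together with $j_h\inner{p_h^n-\eta p_h^{n-1}}{p_h^n}$, and the forcing $\inner{\bf^n}{\bu_h^n-\eta\bu_h^{n-1}}$.

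For the time derivative, since $\tau\bard\bu_h^n=\sum_{i=0}^q\delta_i\bu_h^{n-i}$ and $\bu_h^n-\eta\bu_h^{n-1}=\sum_{j=0}^q\mu_j\bu_h^{n-j}$, the positivity $\textnormal{Re}\,\delta(\zeta)/\mu(\zeta)>0$ for $|\zeta|<1$ of Lemma~\ref{lemma:NevanlinnaOdeh} lets us apply Lemma~\ref{lemma:Dahlquist1} with the $L^2$-inner product $\inner{\cdot}{\cdot}$ on $H$, giving $\inner{\bard\bu_h^n}{\bu_h^n-\eta\bu_h^{n-1}}\geqslant\tau^{-1}\bigl(|\bV^n|_G^2-|\bV^{n-1}|_G^2\bigr)$, where $\bV^n=[\bu_h^n,\dots,\bu_h^{n-q+1}]$ and $|\cdot|_G$ is the $G$-seminorm \eqref{eq:31} associated with the $H$-inner product. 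Summing over $n=q,\dots,N$ telescopes to $\tau^{-1}\bigl(|\bV^N|_G^2-|\bV^{q-1}|_G^2\bigr)$, and the norm equivalence \eqref{eq:32} supplies $|\bV^N|_G^2\geqslant c_0\norm{\bu_h^N}_H^2$ and $|\bV^{q-1}|_G^2\leqslant c_1\sum_{i=0}^{q-1}\norm{\bu_h^i}_H^2$ --- exactly the $\norm{\bu_h^N}_H^2$ term on the left and the initial term on the right of \eqref{eq:velocity_stability}.

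For the diffusion, symmetry of $a\inner{\cdot}{\cdot}$ together with $a\inner{\bv}{\bv}=\norm{\bv}_V^2$ and Young's inequality yield $a\inner{\bu_h^n}{\bu_h^n-\eta\bu_h^{n-1}}\geqslant(1-\tfrac\eta2)\norm{\bu_h^n}_V^2-\tfrac\eta2\norm{\bu_h^{n-1}}_V^2$, and because $\eta<1$ \emph{strictly} the summation keeps a positive multiple of $\sum_{n=q}^N\norm{\bu_h^n}_V^2$. The velocity--pressure couplings are the crux, and this is where the argument genuinely departs from the parabolic case: in the continuous derivation the $b\inner{\cdot}{\cdot}$-terms cancel once \eqref{eq:2b} is subtracted, but the multiplier shift $\eta\bu_h^{n-1}$, $\eta p_h^{n-1}$ destroys this exact cancellation. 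Instead I would invoke the discrete constraint \eqref{eq:27b} at steps $n$ and $n-1$, tested against $p_h^{n-1}$ and $p_h^n$ respectively, to rewrite all three pressure terms and collapse them to $\abs{p_h^n}_{j_h}^2-\eta\,j_h\inner{p_h^n}{p_h^{n-1}}$; Young's inequality and $\eta<1$ then produce a positive multiple of $\sum_{n=q}^N\abs{p_h^n}_{j_h}^2$. The forcing term is finally bounded by Cauchy--Schwarz, the Poincar\'e inequality $\norm{\bv}_H\leqslant c_P\nu^{-1/2}\norm{\bv}_V$, and Young with a small parameter, so its velocity part is absorbed into the $V$-seminorm sum from the diffusion term, leaving a contribution $\leqslant C\,\tfrac{c_P^2}{\nu}\sum_{n=q}^N\norm{\bf^n}_H^2$.

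Collecting these estimates, multiplying by $\tau$ (which turns the discrete sums into the $\ell^2(J_q;\cdot)$-norms of \eqref{eq:discrete_l2_norm}) and discarding the nonnegative terms left on the left-hand side yields \eqref{eq:velocity_stability} with a constant depending only on $c_0$, $c_1$ and $\eta_q$, hence independent of $h$, $\tau$, $N$ and $T$. I expect the main obstacle to be precisely the pressure--velocity bookkeeping: every coupling term created by the multiplier shift and by the stabilization $j_h$ has to be made either sign definite or absorbable against the strict Nevanlinna--Odeh margin $1-\eta>0$, using solely \eqref{eq:27b} and never an inf-sup bound on $p_h^n$ (not available at this stage). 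A secondary technical point is the first step $n=q$, where $\bu_h^{q-1}$ and the starting pressure enter through the cross-terms and must be dominated by the initial data; for BDF-1 and BDF-2 the multiplier is trivial ($\eta=0$), all cross-terms vanish, and one recovers the classical discrete energy estimate. The parallel argument for $q=6$, where the Nevanlinna--Odeh multiplier is replaced by the relaxed multiplier of \cite{Akrivis2021a}, is deferred to Section~\ref{sec:stab-bdf6}.
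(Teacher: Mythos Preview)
Your overall strategy --- $G$-stability plus the Nevanlinna--Odeh multiplier for the time derivative, Young's inequality and the strict margin $1-\eta>0$ for the diffusion and stabilization terms, Poincar\'e plus absorption for the forcing --- is exactly the paper's, and your choice of pressure test function $q_h=p_h^n-\eta p_h^{n-1}$ (instead of $q_h=p_h^n$) is a harmless variant: after using \eqref{eq:27b} at levels $n$ and $n-1$ you land on the same identity $|p_h^n|_{j_h}^2-\eta\,j_h(p_h^n,p_h^{n-1})$ that the paper obtains in \eqref{eq:37}.

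The gap is your handling of the first step $n=q$. Keeping the multiplier in the velocity test, i.e.\ $\bv_h=\bu_h^q-\eta\bu_h^{q-1}$, leaves you with two residual terms that cannot be dominated by the right-hand side of \eqref{eq:velocity_stability}. First, the diffusion telescoping from $n=q$ to $N$ deposits $\tfrac{\eta}{2}\tau\norm{\bu_h^{q-1}}_V^2$ on the right; Theorem~\ref{th:velocity} only allows $\norm{\bu_h^{q-1}}_H^2$, and there is no $h$-uniform way to pass from one to the other. Second, and more seriously, the cross term $-\eta\,b(p_h^q,\bu_h^{q-1})$ survives: since \eqref{eq:27b} is not available at level $q-1$ (no discretely divergence-free assumption here), this term cannot be rewritten through $j_h$, and any direct bound involves either $\norm{p_h^q}_Q$ (not yet controlled) or $\norm{\bu_h^{q-1}}_V$ (not admissible on the right). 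Your parenthetical ``retains a boundary contribution \dots\ must be dominated by the initial data'' glosses over precisely this obstruction.

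The paper's fix is to \emph{not} use the multiplier at the first step. One sums the multiplier estimate only over $n=q+1,\dots,N$, so that the boundary remainders sit at level $q$ rather than $q-1$, namely $\tau\norm{\bu_h^q}_V^2$ and $\tau|p_h^q|_{j_h}^2$. These are computed quantities, and a separate, multiplier-free test of \eqref{eq:33} at $n=q$ with $(\bv_h,q_h)=(\bu_h^q,p_h^q)$ gives
\[
\norm{\bu_h^q}_H^2+\tau\norm{\bu_h^q}_V^2+\tau|p_h^q|_{j_h}^2\leqslant C\Bigl[\sum_{i=0}^{q-1}\norm{\bu_h^i}_H^2+\tau\tfrac{c_P^2}{\nu}\norm{\bf^q}_H^2\Bigr],
\]
which closes the estimate using only $H$-norms of the initial data. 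With this modification at $n=q$, your argument goes through and coincides with the paper's proof.
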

\begin{proof}
    The main idea is to mimic the test procedure used to establish the stability estimate~\eqref{eq:stability estimate - continuous}
    for the continuous problem.
    To replace the integration with respect to the time variable in the fully discrete setting,
     Dahlquist's $G$-stability, Lemma~\ref{lemma:Dahlquist1}, and the multiplier technique of Nevanlinna and Odeh, Lemma~\ref{lemma:NevanlinnaOdeh} are combined to arrive at an estimate for a telescopic expression for $\|\bu_h^n\|^2_{H}$.
    For other parabolic-type PDEs, similar arguments were used in \cite{Lubich2013,Akrivis2015a,MCF}, and \cite{Akrivis2021}.
    
    We start by testing equation~\eqref{eq:33} with $(\bv_h,q_h) = (\bu_h^n-\eta \bu_h^{n-1},p_h^n)$ where $\eta$ is the multiplier defined in Lemma \ref{lemma:NevanlinnaOdeh} and depends on the order of the chosen BDF scheme.
    This yields
    \begin{equation}
    \begin{aligned}
        \inner[auto]{\bard\bu_h^n}{\bu_h^n-\eta \bu_h^{n-1}}&+a\inner[auto]{\bu_h^n}{\bu_h^n-\eta \bu_h^{n-1}}+b\inner[auto]{p_h^n}{\bu_h^n-\eta \bu_h^{n-1}} \\
        &-b\inner[auto]{p_h^n}{\bu_h^n}+j_h\inner[auto]{p_h^n}{p_h^n}=\inner[auto]{\bf^n}{\bu_h^n-\eta \bu_h^{n-1}}.
        \label{eq:34}
    \end{aligned}
    \end{equation}
    Thanks to the bilinearity of $b\inner{\cdot}{\cdot}$, two terms cancel immediately. 
    Recalling~\eqref{eq:31} we now define
    \begin{equation}
        |\bU^n_h|^2_{G}=\sum_{i,j=1}^qg_{ij}(\bu_h^{n-i+1},\bu_h^{n-j+1})
        \label{eq:35}
    \end{equation}
    and directly apply the \MT Lemma~\ref{lemma:NevanlinnaOdeh} in the form \eqref{eq:29} to the first term in \eqref{eq:34}. From $j_h\inner{q_h}{q_h}=\abs{q_h}_{j_h}^2$ and the coercivity of $a\inner{v_h}{v_h}= \norm{v_h}_V^2$, we obtain
    \begin{equation}
    \label{eq:36}
        \begin{aligned}
        \frac{1}{\tau}  \Big( |\bU_h^n|^2_G - |\bU_h^{n-1}|^2_G \Big) + \norm{\bu_h^n}_V^2 +\abs{p_h^n}^2_{j_h}-&\eta a\inner[auto]{\bu_h^n}{\bu_h^{n-1}}-\eta b\inner[auto]{p_h^n}{\bu_h^{n-1}} \\
        &\leqslant \inner[auto]{\bf^n}{\bu_h^n-\eta \bu_h^{n-1}}.
        \end{aligned}
    \end{equation}
    Thanks to~\eqref{eq:27b}, we have 
    $
    b\inner[auto]{p_h^n}{\bu_h^{n-1}} = 
    j_h\inner[auto]{p_h^n}{p_h^{n-1}}
    $
    for $n\geqslant q+1$, allowing us to rewrite \eqref{eq:36} as
    \begin{equation}
    \label{eq:37}
    \begin{aligned}
        \frac{1}{\tau}  \Big( |\bU_h^n|^2_G - |\bU_h^{n-1}|^2_G \Big) + \norm{\bu_h^n}_V^2 +\abs{p_h^n}^2_{j_h}-&\eta a\inner[auto]{\bu_h^n}{\bu_h^{n-1}}-\eta j_h\inner[auto]{p_h^n}{p_h^{n-1}} \\
        &\leqslant\inner[auto]{\bf^n}{\bu_h^n-\eta \bu_h^{n-1}}.
    \end{aligned}
    \end{equation}
    Next, invoking the the continuity of $a\inner{\cdot}{\cdot}$ and $j_h\inner{\cdot}{\cdot}$
    and successively applying a Cauchy--Schwarz inequality, a modified Young's inequality of the form $AB\leqslant \frac{1}{2\epsilon}A^2+\frac{\epsilon}{2} B^2$, and the Poincaré inequality, we estimate that
    \begin{subequations}
    	\label{eq:38}
        \begin{align}
            -\eta a\inner[auto]{\bu_h^n}{\bu_h^{n-1}}&\geqslant -\eta\norm{\bu_h^n}_V\norm{\bu_h^{n-1}}_V\geqslant -\eta \frac{\norm{\bu_h^n}_V^2}{2}-\eta \frac{\norm{\bu_h^{n-1}}_V^2}{2}, \\
            -\eta j_h\inner[auto]{p_h^n}{p_h^{n-1}}&\geqslant -\eta\abs{p_h^n}_{j_h}\abs{p_h^{n-1}}_{j_h}\geqslant -\eta \frac{\abs{p_h^n}_{j_h}^2}{2}-\eta \frac{\abs{p_h^{n-1}}_{j_h}^2}{2}, \\
            \inner[auto]{\bf^n}{\bu_h^n-\eta \bu_h^{n-1}}&= \inner[auto]{\bf^n}{\bu_h^n}-\eta  \inner[auto]{\bf^n}{\bu_h^{n-1}}  \\
            &\leqslant c_P\nu^{-\frac{1}{2}}\norm{\bf^n}_H\norm{\bu_h^n}_V+\eta c_P\nu^{-\frac{1}{2}}\norm{\bf^n}_H\norm{\bu_h^{n-1}}_V \nonumber \\
            &\leqslant\frac{c_P^2}{\nu}\frac{\norm{\bf^n}_H^2}{2\epsilon}+\frac{\epsilon}{2}\norm{\bu_h^n}_V^2+\eta\frac{c_P^2}{\nu}\frac{\norm{\bf^n}_H^2}{2\epsilon}+\eta\frac{\epsilon}{2}\norm{\bu_h^{n-1}}_V^2. \nonumber
        \end{align}
    \end{subequations}
    Combing these bounds, we arrive at
    \begin{equation}
        \begin{aligned}
            \frac{1}{\tau}  \Big( |\bU_h^n|^2_G - |\bU_h^{n-1}|^2_G \Big) +\frac{2-\eta-\epsilon}{2} \norm{\bu_h^n}_V^2 - \eta\frac{1+\epsilon}{2}\norm{\bu_h^{n-1}}_V^2\\ +\frac{2-\eta}{2}\abs{p_h^n}^2_{j_h} - \frac{\eta}{2}\abs{p_h^{n-1}}^2_{j_h}
            \leqslant (1+\eta)\frac{c_P^2}{2\epsilon\nu}\norm{\bf^n}_H^2 ,
        \end{aligned}
    \label{eq:39}
    \end{equation}
where $\epsilon>0$ is the free parameter of the modified Young's inequality. In order to be able to perform telescoping for the velocity in the $\norm{\cdot}_V$ norm, we must ensure that $\frac{2-\eta-\epsilon}{2} - \eta\frac{1+\epsilon}{2}>0$ and in order to do the same for the pressure, $\frac{2-\eta}{2}-\frac{\eta}{2}=1-\eta>0$ must hold. The latter condition $1-\eta>0$ is already satisfied thanks to the properties of the multipliers and regarding the first one, it is enough to choose $\epsilon$ such that $0<\epsilon<\frac{2-2\eta}{1+\eta}$. Multiplying~\eqref{eq:39} by $\tau$ and summing for $n = q+1, \ldots, N$ yields
    \begin{equation}
        \begin{aligned}
            |\bU_h^N|^2_G +\tau \frac{2-2\eta-\epsilon(1+\eta)}{2}\sum_{n=q+1}^N \norm{\bu_h^n}_V^2 + \tau(1-\eta)\sum_{n=q+1}^N \abs{p_h^n}^2_{j_h} \leqslant \;|\bU_h^{q}|^2_G \\
            +\tau\eta\frac{1+\epsilon}{2} \norm{\bu_h^q}_V^2+\tau\frac{\eta}{2}\abs{p_h^q}^2_{j_h}+\tau (1+\eta)\frac{c_P^2}{2\epsilon\nu}\sum_{n=q+1}^N \norm{\bf^n}_H^2,
        \end{aligned}
    \label{eq:40}
    \end{equation}
    where the sum is limited  to $n \geqslant q+1$ due to the validity range of the change from \eqref{eq:36} to \eqref{eq:37}.
    Exploiting the bounds \eqref{eq:32} for the \MT norm, we see that
    \begin{equation}
        \begin{aligned}
            \norm{\bu_h^N}_H^2 +\tau\sum_{n=q+1}^N \norm{\bu_h^n}_V^2 + \tau\sum_{n=q+1}^N \abs{p_h^n}^2_{j_h}\leqslant \; C\Biggl[\norm{\bu_h^q}_H^2 +\tau\eta\norm{\bu_h^q}_V^2+\tau\eta\abs{p_h^q}^2_{j_h}\\+\sum_{i=1}^{q-1}\norm{\bu_h^i}_H^2+\tau \frac{c_P^2}{\nu}\sum_{n=q+1}^N \norm{\bf^n}_H^2\Biggr].
        \end{aligned}
    \label{eq:41}
    \end{equation}
    To arrive at the final estimate, quantities for $n=q$ still have to be added to the left-hand side of 
    \eqref{eq:41}, and then need to be estimated in terms of the initial data. We proceed by testing \eqref{eq:34} at $n=q$ with $\bv_h=\bu_h^q$ and $q_h=p_h^q$, obtaining
    \begin{equation}
        \frac{\delta_0}{\tau}\norm{\bu_h^q}_H^2+\norm{\bu_h^q}_V+\abs{p_h^q}_{j_h} = \inner{\bf^q}{\bu_h^q}-\inner[auto]{\sum_{i=1}^q\frac{\delta_i}{\tau}\bu_h^{q-i}}{\bu_h^q}.
        \label{eq:42}
    \end{equation}
    Multiplying the resulting identity by $\tau$ combined with multiple applications of the Cauchy--Schwarz inequality, Young's inequality, 
	and the triangle inequality for the right-hand side leaves us with the estimate
    \begin{equation}
    \label{eq:44}
        \begin{aligned}
            \norm{\bu_h^q}_H^2 +\tau \norm{\bu_h^q}_V^2 + \tau \abs{p_h^q}^2_{j_h}
            \leqslant C\Biggl[ \sum_{i=0}^{q-1}\norm{\bu_h^{i}}^2 + \tau\frac{c_P^2}{\nu}\norm{\bf^q}_H^2 \Biggr] .
        \end{aligned}
    \end{equation}
    Combining the estimates \eqref{eq:41} and \eqref{eq:44} yields the desired stability bound
    \begin{equation}
        \begin{aligned}
            \norm{\bu_h^N}_H^2 +\tau\sum_{n=q}^N\norm{\bu^n_h}_{V}^2 + \tau\sum_{n=q}^N\abs{p_h^n}_{j_h}^2\leqslant \; C\Biggl[\sum_{i=0}^{q-1}\norm{\bu_h^i}_H^2+\tau\frac{c_P^2}{\nu}\sum_{n=q}^N \norm{\bf^n}_H^2\Biggr].
        \end{aligned}
        \label{eq:45}
    \end{equation}
\end{proof}

Next, we wish to derive a stability bound for the fully discrete pressure in a form analogous to \eqref{eq:16}. 
The main idea is again to follow the approach outlined in the continuous case in Section~\ref{subsection:continuous stability} with the following key differences:
\begin{itemize}
    \item[--] The continuous inf-sup condition \eqref{eq:4b} needs to
    be substituted by its discrete analogue \eqref{eq:lemma inf-sup};
    \item[--] the substitution of the continuous derivative with its discrete analogue \eqref{eq:26}.
\end{itemize} 

As in the continuous case, we will see that a bound for the discrete velocity derivative is needed, which we established first, and only afterwards we conclude with the derivation of the pressure bound.
\begin{theorem}
    \label{th:acceleration}
    Let $\{(\bu_h^n,p_h^n)\}_{n=q}^N$ be the solution of the fully
    discrete problem \eqref{eq:27} with initial values $\bu_h^i\in
    V_h$, $i=0,\dotsc ,q-1$. Suppose that the initial values $\{\bu_h^i\}_{i=0}^{q-1}$ are
    discretely divergence-free in the sense that
    for each $\bu_h^i\in V_h$ there exists corresponding $p_h^i\in Q_h$ such
    that
    \begin{equation}
        b\inner[auto]{q_h}{\bu_h^i}-j_h\inner[auto]{p_h^i}{q_h}=0, \qquad \forall q_h\in Q_h.
        \label{eq:discretely_div_free}
    \end{equation}
    Then the following stability estimate holds for $0 \leqslant N \tau \leqslant T$,
    \begin{equation}
        \norm{\bard\bu_h^\tau}_{
        \ell^2(J_q;H)
        }^2
        +\norm{\bu_h^N}_V^2+\abs{p_h^N}_{j_h}^2 \leqslant C\Biggl[\sum_{i=0}^{q-1}(\norm{\bu_h^{i}}_V^2+\abs{p_h^{i}}_{j_h}^2)
        +\norm[auto]{\bf}_{\ell^2(J_q; H)}^2
        \Biggr],
       \label{eq:acceleration_stability}
    \end{equation}
    with a constant $C>0$ independent of $h$, $N$, $\tau$ and the final time $T$.
\end{theorem}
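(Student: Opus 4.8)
The plan is to mimic the continuous argument \eqref{eq:12}--\eqref{eq:14}, where stability of $\partial_t\bu$ in $L^2(J;H)$ is obtained by testing the momentum equation with $\partial_t\bu$. In the fully discrete case the natural substitute is $\bard\bu_h^n$, but since $\bard$ is a $q$-step operator one cannot apply $G$-stability directly; instead we test the momentum equation \emph{both} at step $n$ and at step $n-1$ with the same test function $\bard\bu_h^n$ and combine the two identities with the Nevanlinna--Odeh multiplier $\eta=\eta_q$ of \autoref{lemma:NevanlinnaOdeh}, in the spirit of \autoref{th:velocity} and of \cite{MCF,Akrivis2021}. (Again $1\le q\le 5$; the case $q=6$ is deferred to \autoref{sec:stab-bdf6}.)

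Fix $n\ge q+1$. Subtracting $\eta$ times \eqref{eq:27a} at step $n-1$ tested with $\bard\bu_h^n$ from \eqref{eq:27a} at step $n$ tested with $\bard\bu_h^n$ yields
\[
  \inner[auto]{\bard\bu_h^n-\eta\bard\bu_h^{n-1}}{\bard\bu_h^n}+a\inner[auto]{\bu_h^n-\eta\bu_h^{n-1}}{\bard\bu_h^n}+b\inner[auto]{p_h^n-\eta p_h^{n-1}}{\bard\bu_h^n}=\inner[auto]{\bf^n-\eta\bf^{n-1}}{\bard\bu_h^n}.
\]
Writing $\bard\bu_h^n=\tau^{-1}\sum_{i=0}^q\delta_i\bu_h^{n-i}$ and using \eqref{eq:27b} at the indices $n-q,\dots,n$ — this is where the hypothesis \eqref{eq:discretely_div_free} enters, to cover the indices below $q$ — the coupling term rewrites as $b\inner[auto]{p_h^n-\eta p_h^{n-1}}{\bard\bu_h^n}=j_h\inner[auto]{p_h^n-\eta p_h^{n-1}}{\bard p_h^n}$, with $\bard p_h^n=\tau^{-1}\sum_{i=0}^q\delta_i p_h^{n-i}$ and $p_h^i$, $i<q$, the pressures provided by \eqref{eq:discretely_div_free}. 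Now \autoref{lemma:Dahlquist1} is invoked twice. In the inner product $a\inner[auto]{\cdot}{\cdot}$ on $V_h$, the term $a\inner[auto]{\bu_h^n-\eta\bu_h^{n-1}}{\bard\bu_h^n}=\tau^{-1}a\inner[auto]{\sum_i\delta_i\bu_h^{n-i}}{\bu_h^n-\eta\bu_h^{n-1}}$ has exactly the shape \eqref{eq:29} with $\mu(\zeta)=1-\eta\zeta$, hence is bounded below by $\tau^{-1}\big(|\bU_h^n|^2_G-|\bU_h^{n-1}|^2_G\big)$ with the $G$-seminorm \eqref{eq:31} formed from $a\inner[auto]{\cdot}{\cdot}$; in the semi-inner product $j_h\inner[auto]{\cdot}{\cdot}$ the coupling term is likewise bounded below by $\tau^{-1}$ times the increment of the associated $G$-seminorm $\sum_{i,j}g_{ij}j_h\inner[auto]{p_h^{n-i+1}}{p_h^{n-j+1}}$. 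The $H$-term satisfies $\inner[auto]{\bard\bu_h^n-\eta\bard\bu_h^{n-1}}{\bard\bu_h^n}\ge\tfrac{2-\eta}{2}\norm{\bard\bu_h^n}_H^2-\tfrac{\eta}{2}\norm{\bard\bu_h^{n-1}}_H^2$, and the right-hand side is handled by Cauchy--Schwarz and a modified Young inequality, producing a small reabsorbable multiple of $\norm{\bard\bu_h^n}_H^2$ and a term proportional to $\norm{\bf^n}_H^2+\norm{\bf^{n-1}}_H^2$. Multiplying by $\tau$, summing over $n=q+1,\dots,N$, telescoping the two $G$-seminorm increments, and using \eqref{eq:32} to pass to the plain $V$- and $j_h$-seminorms (with the Young parameter chosen small enough to absorb), we obtain a bound for $\tau\sum_{n=q+1}^N\norm{\bard\bu_h^n}_H^2+\norm{\bu_h^N}_V^2+\abs{p_h^N}_{j_h}^2$ in terms of $\norm[auto]{\bf}_{\ell^2(J_q;H)}^2$, the initial data $\{\norm{\bu_h^i}_V^2,\abs{p_h^i}_{j_h}^2\}_{i=1}^{q-1}$, and the level-$q$ quantities $\tau\norm{\bard\bu_h^q}_H^2$, $\norm{\bu_h^q}_V^2$, $\abs{p_h^q}_{j_h}^2$ (which enter through the lower telescoping boundaries $|\bU_h^q|^2_G$ and its pressure analogue, and through the leftover $-\tfrac{\eta}{2}\norm{\bard\bu_h^q}_H^2$).

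It remains to control these level-$q$ quantities by the initial data alone. Testing \eqref{eq:27a}--\eqref{eq:27b} at $n=q$ with $(\bv_h,q_h)=(\bard\bu_h^q,\bard p_h^q)$, using \eqref{eq:27b} at index $q$ and \eqref{eq:discretely_div_free} at indices $0,\dots,q-1$, the pressure--velocity contributions collapse to $j_h\inner[auto]{p_h^q}{\bard p_h^q}$ and one is left (in the spirit of \eqref{eq:42}) with $\norm{\bard\bu_h^q}_H^2+a\inner[auto]{\bu_h^q}{\bard\bu_h^q}+j_h\inner[auto]{p_h^q}{\bard p_h^q}=\inner[auto]{\bf^q}{\bard\bu_h^q}$. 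Expanding $\tau\bard\bu_h^q=\delta_0\bu_h^q+\sum_{i=1}^q\delta_i\bu_h^{q-i}$ (and the same for $\bard p_h^q$) and multiplying by $\tau$ surfaces the positive terms $\delta_0\norm{\bu_h^q}_V^2$ and $\delta_0\abs{p_h^q}_{j_h}^2$ with $\delta_0=\sum_{l=1}^q 1/l>0$; the remaining cross terms involve only $\bu_h^0,\dots,\bu_h^{q-1}$ and $p_h^0,\dots,p_h^{q-1}$ and are absorbed by Young's inequality, while the forcing term yields $\tfrac{\tau}{2}\norm{\bf^q}_H^2$ plus a reabsorbable $\tfrac{\tau}{2}\norm{\bard\bu_h^q}_H^2$. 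This gives $\tau\norm{\bard\bu_h^q}_H^2+\norm{\bu_h^q}_V^2+\abs{p_h^q}_{j_h}^2\le C\sum_{i=0}^{q-1}(\norm{\bu_h^i}_V^2+\abs{p_h^i}_{j_h}^2)+C\tau\norm{\bf^q}_H^2$, precisely what is needed to replace the level-$q$ quantities in the previous estimate by initial data; adding $\tau\norm{\bard\bu_h^q}_H^2$ to the left so that $\tau\sum_{n=q}^N\norm{\bard\bu_h^n}_H^2=\norm{\bard\bu_h^\tau}_{\ell^2(J_q;H)}^2$ is reconstituted, \eqref{eq:acceleration_stability} follows. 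The main obstacle is the twofold use of $G$-stability in the middle paragraph: it is available only because subtracting $\eta$ times the step-$(n-1)$ equation makes both $a\inner[auto]{\cdot}{\cdot}$ and $j_h\inner[auto]{\cdot}{\cdot}$ act on exactly the pair $\big(\sum_i\delta_i(\cdot)^{n-i},\,(\cdot)^n-\eta(\cdot)^{n-1}\big)$ required by \autoref{lemma:Dahlquist1}; and obtaining the coupling term in the form $j_h\inner[auto]{p_h^n-\eta p_h^{n-1}}{\bard p_h^n}$ is what forces the discretely divergence-free assumption \eqref{eq:discretely_div_free} on the first $q-1$ steps, since otherwise $b\inner[auto]{q_h}{\bu_h^{n-i}}=j_h\inner[auto]{q_h}{p_h^{n-i}}$ fails for $n-i<q$ and the pressure $G$-seminorm no longer telescopes — exactly the mechanism behind the small-time-step restriction discussed in \autoref{sec:small_timestep}.
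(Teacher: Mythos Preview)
Your proposal is correct and follows essentially the same route as the paper: test \eqref{eq:27a} at steps $n$ and $n-1$ with $\bard\bu_h^n$, combine with the Nevanlinna--Odeh multiplier, convert the $b$-terms into $j_h$-terms via \eqref{eq:27b} and \eqref{eq:discretely_div_free}, apply $G$-stability to both the $a$- and $j_h$-contributions, telescope, and close with the level-$q$ estimate obtained by testing at $n=q$ with $\bard\bu_h^q$. The only cosmetic differences are that the paper records the intermediate $G$-seminorms with separate symbols $|\bU_h^n|_F^2$, $|P_h^n|_M^2$ (your unnamed increments are the same objects), and at the level-$q$ step the paper phrases the test function for the constraint as $q_h=\tfrac{\delta_0}{\tau}p_h^q$ rather than $\bard p_h^q$, but the resulting identity \eqref{eq:58} is exactly the one you derive.
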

\begin{proof}
    As in the proof of \autoref{th:velocity}, the key idea is
	to adapt the test procedure used in the continuous case, cf. Section~\ref{subsection:continuous stability},
    to the fully discrete setting by utilizing 
    Dahlquist's $G$-stability from Lemma~\ref{lemma:Dahlquist1}, and the multiplier technique of Nevanlinna and Odeh from Lemma~\ref{lemma:NevanlinnaOdeh}.

    We start by testing \eqref{eq:33} with $\bv_h = \bard\bu_h^n$ and $q_h=0$, yielding
    \begin{equation}
        \norm{\bard\bu_h^n}_H^2+a\inner[auto]{\bu_h^n}{\bard\bu_h^n}+b\inner[auto]{p_h^n}{\bard\bu_h^n} =\inner[auto]{\bf^n}{\bard\bu_h^n}.
        \label{eq:46}
    \end{equation}
    Compared to the continuous case \eqref{eq:12}, the non-symmetric
    bilinear form $b\inner{p_h^n}{\bard u_h^n}$ is still present since
    in general $\bu_h^n \notin V_{0,\div}$. 
    To handle this, we exploit that 
    $b\inner{\bu_h^{n}}{q_h}=j\inner{p_h^{n}}{q_h}$ for $0\leqslant n \leqslant N$
    thanks to~\eqref{eq:27b} and the assumption~\eqref{eq:discretely_div_free}
    on the initial values. With this in mind,
    we test \eqref{eq:33} with
    $\bv_h=0$ and $q_h=p_h^n$
    at $t_{n-i}$ for $n\geqslant q+1$ and $i=0,\ldots,q$ to deduce that
    \begin{equation}
        \begin{aligned}
        b\inner[auto]{p_h^n}{\bard\bu_h^n}=\frac{1}{\tau}\sum_{i=0}^q \delta_ib\inner{p_h^n}{\bu_h^{n-i}}=\frac{1}{\tau}\delta_i\sum_{i=0}^qj_h\inner{p_h^n}{p_h^{n-i}} 
        =j_h\inner{p_h^n}{\bard p_h^n}.
        \end{aligned}
        \label{eq:47}
    \end{equation}
    Consequently, we see that for $n\geqslant q+1$,
    \begin{equation}
        \norm{\bard\bu_h^n}_H^2+a\inner[auto]{\bu_h^n}{\bard\bu_h^n}+j_h\inner{p_h^n}{\bard p_h^n} =\inner[auto]{\bf^n}{\bard\bu_h^n}.
        \label{eq:48}
    \end{equation}
    To bound the left-hand side of \eqref{eq:48} from below, we would like to make use of $G$-stability and the multiplier technique. 
    This can be achieved as follows. First, we test \eqref{eq:27a} at time step $n-1$ with $\bv_h = -\eta\bard\bu_h^n\in V_h$ to obtain
    \begin{equation}
    \label{eq:49}
        \begin{aligned}
            -\eta\left[\inner[auto]{\bard\bu_h^{n-1}}{\bard\bu_h^n}+a\inner[auto]{\bu_h^{n-1}}{\bard\bu_h^n}+b\inner[auto]{p_h^{n-1}}{\bard\bu_h^n}\right]=-\eta\inner[auto]{\bf^{n-1}}{\bard\bu_h^n}.
        \end{aligned}
    \end{equation}
    Next, we repeat the argument followed in the derivation of \eqref{eq:48} maintaining the same range of validity to deduce
    \begin{equation}
    \label{eq:50}
        b\inner[auto]{p_h^{n-1}}{\bard\bu_h^n}=j_h\inner{p_h^{n-1}}{\bard p_h^n},
    \end{equation}
    and therefore
    \begin{equation}
        \begin{aligned}
            -\eta\left[\inner[auto]{\bard\bu_h^{n-1}}{\bard\bu_h^n}+a\inner[auto]{\bu_h^{n-1}}{\bard\bu_h^n}+j_h\inner[auto]{p_h^{n-1}}{\bard p_h^n}\right]=-\eta\inner[auto]{\bf^{n-1}}{\bard\bu_h^n}.
        \end{aligned}
        \label{eq:51}
    \end{equation}
    Adding \eqref{eq:51} to \eqref{eq:48}, we get
    \begin{equation}
        \begin{aligned}
        \norm{\bard\bu_h^n}_H^2-\eta \inner[auto]{\bard\bu_h^{n-1}}{\bard\bu_h^n}+a\inner[auto]{\bu_h^n-\eta \bu_h^{n-1}}{\bard\bu_h^n}+j_h\inner{p_h^n-\eta p_h^{n-1}}{\bard p_h^n} \\
        =\inner[auto]{\bf^n-\eta\bf^{n-1}}{\bard\bu_h^n}.
        \end{aligned}
        \label{eq:52}
    \end{equation}
    Note that for the moment, we only consider \eqref{eq:52} for $n\geqslant q+1$ since $\bard\bu_h^{n-1}$ is not defined for $n=q$.
    Analogously to \eqref{eq:31}, we define $G$-norm equivalents for the energy norm and the semi-norm
    induced by the pressure stabilization,
    \begin{equation}
        |\bU^n_h|^2_{F}=\sum_{i,j=1}^qg_{ij}a\inner{\bu_h^{n-i+1}}{\bu_h^{n-j+1}}, \quad
        |P^n_h|^2_{M}=\sum_{i,j=1}^qg_{ij}j_h\inner{p_h^{n-i+1}}{p_h^{n-j+1}}.
        \label{eq:53}
    \end{equation}
    Thanks to Lemma \ref{lemma:Dahlquist1}, the $G$-norms exist since $a\inner{\cdot}{\cdot}$ and  $j_h\inner{\cdot}{\cdot}$ define respectively an inner product and a semi-inner product. We apply the \MT to the third and fourth term in \eqref{eq:52} which yields the bound
    \begin{equation}
    \begin{aligned}
        \norm{\bard\bu_h^n}_H^2-\eta \inner[auto]{\bard\bu_h^{n-1}}{\bard\bu_h^n}+\frac{1}{\tau}\Bigl(|\bU^n_h|^2_{F}-|\bU^{n-1}_h|^2_{F}\Bigr)+\frac{1}{\tau}\Bigl(|P^{n}_h|^2_{M} - |P^{n-1}_h|^2_{M}\Bigr) \\
        \leqslant \inner[auto]{\bf^n-\eta\bf^{n-1}}{\bard\bu_h^n}.
    \end{aligned}
    \label{eq:54}
    \end{equation}
    Applying the Cauchy--Schwarz inequality and a scaled Young's inequality to the second and the last term in \eqref{eq:54}, upon rearranging we see that
    \begin{equation}
        \begin{aligned}
            \frac{1}{2}\norm{\bard\bu_h^n}_H^2-\frac{\eta}{2}\norm{\bard\bu_h^{n-1}}_H^2+\frac{1}{\tau}\Bigl(|\bU^n_h|^2_{F}-|\bU^{n-1}_h|^2_{F}\Bigr)+\frac{1}{\tau}\Bigl(|P^{n}_h|^2_{M} - |P^{n-1}_h|^2_{M}\Bigr) \\
            \leqslant \frac{1}{2(1-\eta)}\norm[auto]{\bf^n-\eta\bf^{n-1}}_H^2.
        \end{aligned}
        \label{eq:55}
    \end{equation}
    We sum up for $q+1\leqslant n\leqslant N$ and obtain
    \begin{equation}
        \begin{aligned}
            \tau\frac{1-\eta}{2}\sum_{n=q+1}^{N}\norm{\bard\bu_h^n}_H^2+|\bU^N_h|^2_{F}+|P^{N}_h|^2_{M}
            \leqslant \tau\frac{\eta}{2}\norm{\bard\bu_h^{q}}_H^2 +|\bU^{q}_h|^2_{F} +|P^{q}_h|^2_{M}\\+C \tau  \sum_{n=q}^N\norm[auto]{\bf^n}_H^2.
        \end{aligned}
        \label{eq:56}
    \end{equation}
    Exploiting once more the norm equivalence \eqref{eq:32} available for $G$-norms, we have
    \begin{equation}
    \label{eq:57}
        \begin{aligned}
            &\ \tau\sum_{n=q+1}^{N} \norm{\bard\bu_h^n}_H^2+\norm{\bu_h^N}_V^2+\abs{p_h^N}_{j_h}^2  \\
            \leqslant &\ C\Biggl[\tau\eta\norm{\bard\bu_h^{q}}_H^2 +\sum_{i=1}^{q}\Big(\norm{\bu_h^i}_V^2+\abs{p_h^i}_{j_h}^2\Big)
            +\tau \sum_{n=q}^N\norm[auto]{\bf^n}_H^2\Biggr].
        \end{aligned}
    \end{equation}
    As in the final steps of the proof of Theorem~\ref{th:velocity},
    the stability estimate is still to depend only on initial values
    and $\bf^n$. Thus, we need to both complete the sum
    $\tau\sum_{n=q}^{N}\norm{\bard\bu_h^n}_H^2$ on the left-hand side
    and bound the term $\tau\eta\norm{\bard\bu_h^{q}}_H^2
    +\norm{\bu_h^q}_V^2+\abs{p_h^q}_{j_h}^2$ on the right-hand side.
    This can be achieved at once with the following arguments. For
    $n=q$ we test \eqref{eq:33} with $\bv_h = \bard\bu_h^{q}$ and
    $q_h=\frac{\delta_0}{\tau} p_h^{q}$ (with $\delta_0 > 0$ being the leading
    coefficient of the BDF method, see \eqref{eq:25}). Collecting the resulting
    terms and using assumption \eqref{eq:discretely_div_free}, we
    obtain
    \begin{equation}
    \label{eq:58}
        \begin{aligned}
        &\ \norm{\bard\bu_h^{q}}_H^2+\frac{\delta_0}{\tau}\norm{\bu_h^{q}}_V^2+\frac{\delta_0}{\tau}\abs[auto]{p^{q}_h}_{j_h}^2 \\
        = &\ \inner[auto]{\bf^{q}}{\bard\bu_h^{q}}-\frac{1}{\tau}\left[a\inner[Big]{\bu_h^{q}}{\sum_{i=1}^q\delta_i\bu_h^{q-i}}+b\inner[Big]{p_h^{q}}{\sum_{i=1}^q\delta_i\bu_h^{q-i}}\right]\\
        = &\  \inner[auto]{\bf^{q}}{\bard\bu_h^{q}}-\frac{1}{\tau}\left[a\inner[Big]{\bu_h^{q}}{\sum_{i=1}^q\delta_i\bu_h^{q-i}}+j_h\inner[Big]{p_h^{q}}{\sum_{i=1}^q\delta_i p_h^{q-i}}\right] .
        \end{aligned}
    \end{equation}
    It will be crucial now that it holds $\delta_0>0$. We multiply \eqref{eq:58} by $\tau$ and then estimate the result by invoking the continuity of $a\inner{\cdot}{\cdot}$ and $j\inner{\cdot}{\cdot}$ combined with scaled Young's inequalities to see that
    \begin{equation}
        \begin{aligned}
        \tau\norm{\bard\bu_h^{q}}_H^2+\delta_0\Bigl(\norm{\bu_h^{q}}_V^2+\abs{p^{q}_h}_{j_h}^2 \Bigr)
        \leqslant
        &\frac{\tau}{2}\norm{\bard\bu_h^{q}}_H^2
        +\frac{\tau}{2}\norm{\bf^{q}}_H^2
        \\
        +\frac{\delta_0}{2}\Bigl(\norm{\bu_h^{q}}_V^2
        +\abs{p_h^{q}}_{j_h}^2\Bigr)
        &+\frac{1}{2\delta_0}
        \Biggl[
        \norm[Big]{\sum_{i=1}^q\delta_i\bu_h^{q-i}}_V^2
        +\abs[Big]{\sum_{i=1}^q\delta_i p_h^{q-i}}_{j_h}^2
        \Biggr], 
        \end{aligned}
        \label{eq:59}
    \end{equation}
    and consequently, after a kick-back argument, we deduce that
    \begin{equation}
        \begin{aligned}
        \tau\norm{\bard\bu_h^{q}}_H^2 +\norm{\bu_h^{q}}_V^2+\abs{p_h^{q}}_{j_h}^2
        \leqslant C\Biggl[ \sum_{i=0}^{q-1}(\norm{\bu_h^{i}}_V^2 + \abs{p_h^{i}}_{j_h}^2)+
        \tau\norm{\bf^{q}}_H^2 \Biggr] .
        \end{aligned}
        \label{eq:60}
    \end{equation}
    Now we use \eqref{eq:60} to add the contribution $\tau\norm{\bard\bu_h^q}_H^2$ with its relative bound to \eqref{eq:57} and 
    then employ it again to
    bound the remaining terms  with $n=q$ on the right-hand side of~\eqref{eq:57}
    to arrive at the desired estimate
    \begin{equation}
        \begin{aligned}
            \tau\sum_{n=q}^{N}\norm{\bard\bu_h^n}_H^2+\norm{\bu_h^N}_V^2+\abs{p_h^N}_{j_h}^2 \leqslant C\Biggl[\sum_{i=0}^{q-1}(\norm{\bu_h^{i}}_V^2+\abs{p_h^{i}}_{j_h}^2)+\tau \sum_{n=q}^N\norm[auto]{\bf^n}_H^2\Biggr] .
        \end{aligned}
        \label{eq:61}
    \end{equation}
    Recalling the definition of the discrete $\ell^2$-norms finishes the proof.
\end{proof}

We are now ready to state and prove the stability estimate for the pressure. For this part we again refer to Section~\ref{subsection:continuous stability} for the underlying ideas.
\begin{theorem}
    \label{th:pressure}
    Let $\{(\bu_h^n,p_h^n)\}_{n=q}^N$ be the solution of the fully discrete problem \eqref{eq:27} with initial values $\bu_h^i\in V_h$, $i=0,\dotsc ,q-1$ and assume that the initial values satisfy \eqref{eq:discretely_div_free}.
    Then the following stability estimate holds for $0 \leqslant N \tau \leqslant T$,
    \begin{equation}
        \norm[auto]{p_h^\tau}_{\ell^2(J_q;Q)}^2 \leqslant C \Biggl[\sum_{i=0}^{q-1}\Bigl( \norm{\bu_h^i}_H^2 +\frac{c_P^2}{\nu}\norm{\bu_h^{i}}_V^2+\frac{c_P^2}{\nu}\abs{p_h^{i}}_{j_h}^2 \Bigr) +\frac{c_P^2}{\nu}\norm{\bf}_{\ell^2(J_q;H)}^2 \Biggr] ,
        \label{eq:pressure_stability}
    \end{equation}
    with a constant $C>0$ independent of $h$, $N$, $\tau$ and the final time $T$.
\end{theorem}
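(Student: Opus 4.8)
The plan is to mirror the continuous pressure argument of Section~\ref{subsection:continuous stability}, cf.~\eqref{eq:11}--\eqref{eq:16}, with the continuous inf-sup condition~\eqref{eq:4b} replaced by its discrete analogue~\eqref{eq:lemma inf-sup} and the time derivative $\partial_t\bu$ replaced by the BDF difference quotient $\bard\bu_h^n$. Fix $n$ with $q\leqslant n\leqslant N$. Applying~\eqref{eq:lemma inf-sup} to $q_h=p_h^n$ and using the momentum equation~\eqref{eq:27a} in the form $b\inner[auto]{p_h^n}{\bv_h}=\inner[auto]{\bf^n}{\bv_h}-\inner[auto]{\bard\bu_h^n}{\bv_h}-a\inner[auto]{\bu_h^n}{\bv_h}$, together with Cauchy--Schwarz, the continuity of $a\inner{\cdot}{\cdot}$, and the Poincaré inequality $\norm{\bv_h}_H\leqslant c_P\nu^{-1/2}\norm{\bv_h}_V$ on every $\bv_h\in V_h$, one obtains the pointwise-in-time bound
\begin{equation*}
\beta\norm{p_h^n}_Q\leqslant \frac{c_P}{\nu^{1/2}}\norm{\bf^n}_H+\frac{c_P}{\nu^{1/2}}\norm{\bard\bu_h^n}_H+\norm{\bu_h^n}_V+C\abs{p_h^n}_{j_h},
\end{equation*}
which is the exact discrete counterpart of~\eqref{eq:11}. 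Note that, unlike in the proofs of the preceding theorems, no telescoping is needed here, so the estimate is valid on the full range $n=q,\ldots,N$ without further restriction.

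Squaring this estimate (via $(a+b+c+d)^2\leqslant 4(a^2+b^2+c^2+d^2)$), multiplying by $\tau$ and summing over $n=q,\ldots,N$, we arrive, in the notation~\eqref{eq:discrete_l2_norm}, at
\begin{equation*}
\norm{p_h^\tau}_{\ell^2(J_q;Q)}^2\leqslant C\Bigl(\frac{c_P^2}{\nu}\norm{\bf}_{\ell^2(J_q;H)}^2+\frac{c_P^2}{\nu}\norm{\bard\bu_h^\tau}_{\ell^2(J_q;H)}^2+\norm{\bu_h^\tau}_{\ell^2(J_q;V)}^2+\abs{p_h^\tau}_{\ell^2(J_q;j_h)}^2\Bigr).
\end{equation*}
This reduces the proof to controlling the three remaining terms by the data, and here the earlier stability results deliver exactly what is needed: \autoref{th:velocity} bounds $\norm{\bu_h^\tau}_{\ell^2(J_q;V)}^2+\abs{p_h^\tau}_{\ell^2(J_q;j_h)}^2$ by $C\bigl(\sum_{i=0}^{q-1}\norm{\bu_h^i}_H^2+\frac{c_P^2}{\nu}\norm{\bf}_{\ell^2(J_q;H)}^2\bigr)$, and \autoref{th:acceleration} — whose discretely-divergence-free hypothesis~\eqref{eq:discretely_div_free} is in force by assumption — bounds $\norm{\bard\bu_h^\tau}_{\ell^2(J_q;H)}^2$ by $C\bigl(\sum_{i=0}^{q-1}(\norm{\bu_h^i}_V^2+\abs{p_h^i}_{j_h}^2)+\norm{\bf}_{\ell^2(J_q;H)}^2\bigr)$.

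Substituting these two bounds and collecting constants yields~\eqref{eq:pressure_stability}; note in particular that multiplying the \autoref{th:acceleration} bound by the prefactor $c_P^2/\nu$ reproduces precisely the weighted initial-data terms $\frac{c_P^2}{\nu}\norm{\bu_h^i}_V^2$, $\frac{c_P^2}{\nu}\abs{p_h^i}_{j_h}^2$ and the weighted forcing term $\frac{c_P^2}{\nu}\norm{\bf}_{\ell^2(J_q;H)}^2$ on the right-hand side of~\eqref{eq:pressure_stability}. There is no genuine obstacle here: the statement is essentially a corollary of \autoref{th:velocity} and \autoref{th:acceleration} combined with the modified discrete inf-sup condition, and the only point requiring attention is the consistent bookkeeping of the $c_P^2/\nu$ weights through the three ingredients, so that the final right-hand side carries exactly the scaling claimed in~\eqref{eq:pressure_stability}.
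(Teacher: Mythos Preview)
Your proposal is correct and follows essentially the same approach as the paper: apply the modified discrete inf-sup condition~\eqref{eq:lemma inf-sup}, substitute the momentum equation~\eqref{eq:27a}, use Cauchy--Schwarz and Poincar\'e to obtain the pointwise bound, then square, multiply by $\tau$, sum over $n=q,\ldots,N$, and invoke Theorems~\ref{th:velocity} and~\ref{th:acceleration}. The paper's proof is exactly this, with the same intermediate inequality~\eqref{eq:66} and the same combination of the two earlier stability results.
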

\begin{proof}
    As in Section \ref{subsection:continuous stability}, the starting point is the modified inf-sup stability 
    \eqref{eq:lemma inf-sup} from which, together with \eqref{eq:24} and the usual procedures already seen, we deduce
    \begin{equation}
    \begin{aligned}
        \beta\norm[auto]{p_h^n}_Q \leqslant &\  \sup_{\bv_h\in V_h}\frac{\abs{b\inner[auto]{p_h^n}{\bv_h}}}{\norm[auto]{\bv_h}_V}+C\abs[auto]{p_h^n}_{j_h}\\
        = &\  \sup_{\bv_h\in V_h}\frac{\abs{-\inner[auto]{\bard\bu_h^n}{\bv_h}-a\inner[auto]{\bu_h^n}{\bv_h}-\inner[auto]{\bf^n}{\bv_h}}}{\norm[auto]{\bv_h}_V}+C\abs[auto]{p_h^n}_{j_h}  
        \\
        \leqslant 
        &\  c_p\nu^{-\frac{1}{2}}\norm{\bard\bu_h^n}_H+\norm[auto]{\bu_h^n}_V+c_p\nu^{-\frac{1}{2}}\norm[auto]{\bf^n}_H+C\abs[auto]{p_h^n}_{j_h}.
    \end{aligned}
    \label{eq:64}
    \end{equation}
    Squaring~\eqref{eq:64}, multiplying it by $\tau$ and summing from $n=q$ to $N$ yields 
    \begin{equation}
        \norm[auto]{p_h^\tau}_{\ell^2(J;Q)}^2\leqslant C\left[\frac{c_P^2}{\nu}\norm{\bard\bu_h^\tau}_{\ell^2(J_q;H)}^2+\norm[auto]{\bu_h^\tau}_{\ell^2(J_q;V)}^2+\abs[auto]{p_h^\tau}_{\ell^2(J_q;j_h)}^2+\frac{c_P^2}{\nu}\norm[auto]{\bf}_{\ell^2(J_q;H)}^2\right] .
        \label{eq:66}
	\end{equation}
	
    Combining \eqref{eq:66} with \eqref{eq:velocity_stability} and \eqref{eq:acceleration_stability} yields the desired result.
\end{proof}

\section{Stability for the six-step BDF method}
\label{sec:stab-bdf6}
In this section we sketch how our stability results can be extended to
the six-step BDF method.  We use the energy techniques based on
the multipliers first derived in \cite{Akrivis2021a}, wherein a multiplier for
the six-step BDF method was first found and has the form, see \cite[Proposition~2.3]{Akrivis2021a}:
\begin{equation}
\label{eq:BDF-6 multiplier}
	\begin{aligned}
		&\ \eta(\zeta) = 1 - \eta_1 \zeta^1 - \dotsb - \eta_6 \zeta^6 , 
		\qquad \text{where} \\
		&\ \eta_1 = \frac{13}{9}, \quad \eta_2 = -\frac{25}{36}, \quad \eta_3 = \frac{1}{9}, \quad \eta_4 = \eta_5 = \eta_6 = 0 .
	\end{aligned}
\end{equation}
In this section, by a slight abuse of notation with respect to \ref{subsec:dahlquist+nevanlinna}, the values $\eta_i$ are now all related to the multiplier for the BDF-6 method. 
It is crucial to note that this is not a multiplier in the Nevanlinna--Odeh sense, but a relaxed version which only satisfies a relaxed positivity assumption 
\begin{align}
    \label{eq:relax-pos-assump}
    1 - \eta_1 \cos(x) - \eta_2 \cos(2x) - \ldots - \eta_6 \cos(6x) > 0,
\end{align}
see Remark~2.1 and Definition~2.2 in \cite{Akrivis2021a}.
Further, \cite[Section~2.2]{Akrivis2021a} shows that for BDF-6 
there are no multipliers which
satisfy a strict positivity condition
\begin{align}
    \label{eq:pos-assump}
    1 - |\eta_1| - |\eta_2| - \ldots - |\eta_6| > 0.
\end{align}
In Section~3.1 and 3.2 of \cite{Akrivis2021a},
energy estimates for the six-step BDF method were derived considering
an abstract parabolic PDE in a Hilbert space.
In this section we show how the techniques from~\cite{Akrivis2021a}
can be employed to extend
Theorem~\ref{th:velocity} and \ref{th:acceleration} to the BDF-6
case.  Since many steps are similar to the proofs of the corresponding two theorems in Section~\ref{sec:stability} herein,
we only highlight the main differences.

\begin{theorem}[Theorem~\ref{th:velocity} for BDF-6]
	\label{th:velocity_bdf6}
    Let $\{(\bu_h^n,p_h^n)\}_{n=q}^N$ be the solution of the fully discrete problem \eqref{eq:27} with initial values $\bu_h^i\in V_h$, $i=0,\dotsc,q-1$ with $q=6$. Then the following stability estimate holds for $0 \leqslant N \tau \leqslant T$,
    \begin{equation}
        \begin{aligned}
            \norm{\bu_h^N}_H^2 +\norm{\bu_h^\tau}_{\ell^2(J_q;V)}^2 + \abs{p_h^\tau}_{\ell^2(J_q;j_h)}^2\leqslant \; C\Biggl[\sum_{i=0}^{q-1}\norm{\bu_h^i}_H^2+\frac{c_P^2}{\nu}\norm{\bf}_{\ell^2(J_q;H)}^2\Biggr],
        \end{aligned}
        \label{eq:velocity_stability_bdf6}
    \end{equation}
    with a constant $C>0$ independent of $h$, $N$, $\tau$ and the final time $T$.
\end{theorem}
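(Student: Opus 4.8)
The plan is to follow the proof of Theorem~\ref{th:velocity} almost verbatim, replacing the Nevanlinna--Odeh multiplier $\mu(\zeta) = 1 - \eta\zeta$ by the relaxed BDF-6 multiplier $\eta(\zeta) = 1 - \eta_1\zeta - \eta_2\zeta^2 - \eta_3\zeta^3$ from~\eqref{eq:BDF-6 multiplier} and tracking the extra cross terms this introduces. First I would test equation~\eqref{eq:33} with $(\bv_h,q_h) = (\bu_h^n - \sum_{i=1}^{3}\eta_i\bu_h^{n-i}, p_h^n)$ for $n \geqslant q+1 = 7$; the coupling terms $b\inner{p_h^n}{\bu_h^n}$ still cancel against $-b\inner{q_h}{\bu_h^n}$, and the remaining $b\inner{p_h^n}{\bu_h^{n-i}}$ terms are converted to $j_h\inner{p_h^n}{p_h^{n-i}}$ using~\eqref{eq:27b} exactly as in~\eqref{eq:47}, so that the incompressibility constraint again recasts the velocity--pressure couplings as pressure semi-norm contributions. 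Applying Lemma~\ref{lemma:Dahlquist1} (which holds for any polynomial pair $\delta,\mu$ with no common divisor and $\mathrm{Re}\,\delta/\mu > 0$ on the unit disk --- and \cite[Proposition~2.3]{Akrivis2021a} guarantees this for the BDF-6 pair) to the $\inner{\bard\bu_h^n}{\cdot}$ term yields the telescoping difference $\frac{1}{\tau}(|\bU_h^n|_G^2 - |\bU_h^{n-1}|_G^2)$ for a suitable s.p.d.\ matrix $G \in \mathbb{R}^{6\times6}$, while $a\inner{\bu_h^n}{\cdot}$ and $j_h\inner{p_h^n}{\cdot}$ produce $\norm{\bu_h^n}_V^2 + \abs{p_h^n}_{j_h}^2$ plus the cross terms $-\sum_{i=1}^3\eta_i a\inner{\bu_h^n}{\bu_h^{n-i}}$ and $-\sum_{i=1}^3\eta_i j_h\inner{p_h^n}{p_h^{n-i}}$.

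The key difference from the five-step case is that these cross terms can no longer be absorbed naively by Cauchy--Schwarz and Young, since $1 - |\eta_1| - |\eta_2| - |\eta_3| < 0$ by~\eqref{eq:pos-assump}. Here I would invoke the Toeplitz-matrix argument of~\cite[Section~3]{Akrivis2021a}: after multiplying by $\tau$ and summing over $n = 7,\ldots,N$, the accumulated cross terms $-\tau\sum_n\sum_{i=1}^3\eta_i a\inner{\bu_h^n}{\bu_h^{n-i}}$ together with a fraction of the positive $\tau\sum_n\norm{\bu_h^n}_V^2$ form a quadratic form whose associated banded symmetric Toeplitz matrix has symbol $1 - \eta_1\cos(x) - \eta_2\cos(2x) - \eta_3\cos(3x)$, which is strictly positive by the relaxed positivity condition~\eqref{eq:relax-pos-assump}; hence this quadratic form is positive definite (up to boundary corrections controlled by the first few $\norm{\bu_h^i}_V$) and the sum $\tau\sum_{n}\norm{\bu_h^n}_V^2$ survives with a positive coefficient, and the identical argument applies to the pressure semi-norm terms. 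The forcing term $\inner{\bf^n - \sum_i\eta_i\bf^{n-i}}{\cdot}$ is handled by Cauchy--Schwarz, Poincaré and Young as before, contributing $C\frac{c_P^2}{\nu}\sum_n\norm{\bf^n}_H^2$ after noting $\sum_i|\eta_i|$ is an absolute constant.

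Once the telescoped inequality
\begin{equation*}
  |\bU_h^N|_G^2 + c\,\tau\sum_{n=7}^N\norm{\bu_h^n}_V^2 + c\,\tau\sum_{n=7}^N\abs{p_h^n}_{j_h}^2 \leqslant |\bU_h^6|_G^2 + C\sum_{i=1}^{5}\bigl(\tau\norm{\bu_h^i}_V^2 + \tau\abs{p_h^i}_{j_h}^2\bigr) + C\frac{c_P^2}{\nu}\,\tau\sum_{n=7}^N\norm{\bf^n}_H^2
\end{equation*}
is in hand, I would close exactly as in Theorem~\ref{th:velocity}: use the norm equivalence~\eqref{eq:32} to replace $|\bU_h^N|_G^2$ by $\norm{\bu_h^N}_H^2$ and $|\bU_h^6|_G^2$ by $\sum_{i=1}^{6}\norm{\bu_h^i}_H^2$, then handle the missing index $n=q=6$ by testing~\eqref{eq:34} at $n=6$ with $\bv_h = \bu_h^6$, $q_h = p_h^6$ as in~\eqref{eq:42}--\eqref{eq:44} to obtain $\norm{\bu_h^6}_H^2 + \tau\norm{\bu_h^6}_V^2 + \tau\abs{p_h^6}_{j_h}^2 \leqslant C[\sum_{i=0}^5\norm{\bu_h^i}_H^2 + \tau\frac{c_P^2}{\nu}\norm{\bf^6}_H^2]$, and finally absorb the boundary terms $\tau\norm{\bu_h^i}_V^2$, $\tau\abs{p_h^i}_{j_h}^2$ for $i=1,\ldots,6$ (which by the same $n=6$-type testing, or simply by the inverse estimate absorbing $\tau$ against the forcing, reduce to $\norm{\bu_h^i}_H^2$) into the right-hand side. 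I expect the main obstacle to be the Toeplitz positivity bookkeeping: one must carefully split the positive $\norm{\bu_h^n}_V^2$ (and $\abs{p_h^n}_{j_h}^2$) mass between the forcing estimate and the Toeplitz form, and correctly account for the $O(1)$ boundary terms at $n \approx 6$ and $n \approx N$ that the finite summation produces relative to the bi-infinite Toeplitz operator --- this is precisely the technical content imported from \cite[Section~3.1--3.2]{Akrivis2021a}, and the transient-Stokes-specific work is only to verify that the incompressibility identity~\eqref{eq:47} lets the same scalar Toeplitz symbol control both the velocity and the pressure contributions simultaneously.
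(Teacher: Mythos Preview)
Your proposal is correct and follows essentially the same approach as the paper: test with the relaxed BDF-6 multiplier from~\eqref{eq:BDF-6 multiplier}, use $G$-stability (Lemma~\ref{lemma:Dahlquist1}) for the telescoping $H$-norm term, convert the $b$-couplings to $j_h$ via~\eqref{eq:27b}, and invoke the Toeplitz-matrix positivity argument from \cite{Akrivis2021a} to control the cross terms that the naive Cauchy--Schwarz/Young bound cannot absorb since $1-|\eta_1|-|\eta_2|-|\eta_3|<0$. One minor slip: the forcing term obtained by testing~\eqref{eq:33} at level $n$ is $\inner{\bf^n}{\bu_h^n - \sum_i\eta_i\bu_h^{n-i}}$, not $\inner{\bf^n - \sum_i\eta_i\bf^{n-i}}{\cdot}$ --- the latter form arises only when one tests at \emph{shifted} time levels as in the acceleration estimate (Theorem~\ref{th:acceleration_bdf6}) --- but this does not affect the argument.
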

\begin{proof}
According to \eqref{eq:BDF-6 multiplier}, we test equation \eqref{eq:33} with 
\begin{align*}
	\bv_h = &\  \bu_h^n - \frac{13}{9}  \bu_h^{n-1} + \frac{25}{36}  \bu_h^{n-2} - \frac{1}{9}  \bu_h^{n-3} := \bu_h^n - \vec\eta^T \bU_h^{n-1} , \\
	q_h = &\ p_h^n ,
\end{align*}
where we used the notation
\begin{align}
\bU_h^{n-1} = [\bu_h^{n-1} ; \dotsc ; \bu_h^{n-6}] \quad \text{and}\quad \vec\eta := \vec\eta_6 = [\tfrac{13}{9 }; -\tfrac{25}{36} ; \tfrac{1}{9} ; 0 ; 0 ; 0] \in \bbR^6.
\end{align}

This yields
\begin{equation}
	\begin{aligned}
		\inner[auto]{\bard\bu_h^n}{\bu_h^n - \vec\eta^T \bU_h^{n-1}} & + a\inner[auto]{\bu_h^n}{\bu_h^n - \vec\eta^T \bU_h^{n-1}}+b\inner[auto]{p_h^n}{\bu_h^n - \vec\eta^T \bU_h^{n-1} }  \\
		&-b\inner[auto]{p_h^n}{\bu_h^n}+j_h\inner[auto]{p_h^n}{p_h^n}=\inner[auto]{\bf^n}{\bu_h^n - \vec\eta^T \bU_h^{n-1} }.
	\end{aligned}
\end{equation}

Similarly as in the proof of Theorem~\ref{th:velocity}, the mixed terms $\pm b\inner[auto]{p_h^n}{\bu_h^n}$
cancel again, and following \cite[Section~3.1]{Akrivis2021a}, we use that Lemma~\ref{lemma:Dahlquist1}, \eqref{eq:27b}
is valid for the multiplier~\eqref{eq:BDF-6 multiplier} to
obtain the BDF-6 analogue of \eqref{eq:37}: for $n \geqslant q + 1 = 7$, we have
\begin{equation}
\label{eq:BDF-6 - 37}
\begin{aligned}
	\frac{1}{\tau}  \Big( |\bU_h^n|^2_G - |\bU_h^{n-1}|^2_G \Big) + \norm{\bu_h^n}_V^2 +\abs{p_h^n}^2_{j_h}- &\ a\inner[auto]{\bu_h^n}{\vec\eta^T \bU_h^{n-1}}- j_h\inner[auto]{p_h^n}{\vec{\eta}^T P_h^{n-1}} \\
	&\ \leqslant \inner[auto]{\bf^n}{\bu_h^n - \vec\eta^T \bU_h^{n-1}}.
\end{aligned}
\end{equation}

At this point, since we are not dealing with a classical Nevanlinna--Odeh multiplier (recall that $1-|\eta_1|-|\eta_2|-|\eta_3|<0$), 
and since a bound for $a\inner[auto]{\bu_h^n}{\vec\eta^T \bU_h^{n-1}} - j_h\inner[auto]{p_h^n}{\vec{\eta}^T P_h^{n-1}}$
suitable for telescoping cannot be found, a new argument is needed.
The solution developed in \cite{Akrivis2021a} is to avoid bounding these terms for each time-step and instead summing up and dealing with the resulting terms. In particular, summing \eqref{eq:BDF-6 - 37} from $n=6$ to $n=N$ gives
\begin{equation}
    \label{eq:BDF-6 - 38}
    \begin{aligned}
        |\bU_h^N|^2_G+ \tau\sum_{n=6}^Na\inner[auto]{\bu_h^{n}}{\bu_h^n-\vec\eta^T \bU_h^{n-1}} + &\ \tau \sum_{n=6}^Nj_h\inner[auto]{p_h^{n}}{p_h^n-\vec\eta^T P_h^{n-1}} \\
        &\ \leqslant |\bU_h^{5}|^2_G + \tau \sum_{n=6}^N\inner[auto]{\bf^n}{\bu_h^n-\vec\eta^T \bU_h^{n-1}}.
    \end{aligned}
\end{equation}
In \cite[Section~3.1]{Akrivis2021a}, the authors were able to bound the sums expression on the left-hand side of \eqref{eq:BDF-6 - 38} 
from below by recasting the sums into a weighted double sum of inner products where
the weights stem from a Toepliz matrix which turns out to be non-negative thanks
to relaxed positivity assumption \eqref{eq:relax-pos-assump}.
For the velocity the resulting estimate reads
    \begin{equation}
        \begin{aligned}
            \sum_{n=6}^Na\inner[auto]{\bu_h^{n}}{\bu_h^n-\vec\eta^T \bU_h^{n-1}}\geqslant \frac{1}{32}\sum_{n=9}^N \norm{\bu_h^n}^2-a\inner{\bu_h^6}{\mu_1\bu_h^5+\mu_2\bu_h^4 + \mu_3 \bu_h^3}\\
         -a\inner{\bu_h^{7}}{\mu_2\bu_h^5 + \mu_3\bu_h^4}-a\inner{\bu_h^{8}}{\mu_3\bu_h^5}.
        \end{aligned}
    \end{equation}
An analogous expression holds for the pressure term. Estimating the components involving the forcing term or starting approximations can be done with classical techniques as the one seen in \eqref{eq:42}, and detailed in \cite[Section~3.1]{Akrivis2021a}. Properties of the $G$-norms (e.g.~\eqref{eq:32}) then yield the stability estimate for the six-step BDF method.
\end{proof}

\begin{theorem}[Theorem~\ref{th:acceleration} for BDF-6]
    \label{th:acceleration_bdf6}
    Under the same hypothesis of Theorem~\ref{th:acceleration}, the following stability estimate holds for the BDF-q method with $q=6$ for $0 \leqslant N \tau \leqslant T$,
    \begin{equation}
        \norm{\bard\bu_h^\tau}_{
        \ell^2(J_q;H)
        }^2
        +\norm{\bu_h^N}_V^2+\abs{p_h^N}_{j_h}^2 \leqslant C\Biggl[\sum_{i=0}^{q-1}(\norm{\bu_h^{i}}_V^2+\abs{p_h^{i}}_{j_h}^2)
        +\norm[auto]{\bf}_{\ell^2(J_q; H)}^2
        \Biggr],
       \label{eq:acceleration_stability_bdf6}
    \end{equation}
    with a constant $C>0$ independent of $h$, $N$, $\tau$ and the final time $T$.
\end{theorem}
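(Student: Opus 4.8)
The plan is to mirror the proof of Theorem~\ref{th:acceleration} step by step, replacing the scalar Nevanlinna--Odeh multiplier $\eta$ by the relaxed BDF-6 multiplier $\vec\eta = [\tfrac{13}{9};-\tfrac{25}{36};\tfrac{1}{9};0;0;0]$ of~\eqref{eq:BDF-6 multiplier}, and replacing every \emph{pointwise} telescoping step by a \emph{summed} Toeplitz-matrix argument in the spirit of~\cite[Section~3.1--3.2]{Akrivis2021a}, exactly as in the proof of Theorem~\ref{th:velocity_bdf6}.

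First I would test~\eqref{eq:33} at step $n$ with $\bv_h = \bard\bu_h^n$ and $q_h = 0$, obtaining the exact analogue of~\eqref{eq:46}. Since the initial values satisfy~\eqref{eq:discretely_div_free}, relation~\eqref{eq:27b} holds for all $0 \leqslant m \leqslant N$, so --- repeating the manipulation in~\eqref{eq:47} --- the coupling term becomes $b\inner{p_h^n}{\bard\bu_h^n} = j_h\inner{p_h^n}{\bard p_h^n}$. Next I would test~\eqref{eq:27a} at the three time levels $n-1, n-2, n-3$ with $\bv_h = -\eta_i\bard\bu_h^n \in V_h$ (only $\eta_1, \eta_2, \eta_3$ are nonzero) and sum; using~\eqref{eq:27b} once more, each resulting coupling term is rewritten as $b\inner{p_h^{n-i}}{\bard\bu_h^n} = j_h\inner{p_h^{n-i}}{\bard p_h^n}$, cf.~\eqref{eq:50}. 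Adding these identities to the one at step $n$ yields, for $n \geqslant q + 3 = 9$ (so that~\eqref{eq:27a} is available at step $n-3$), the BDF-6 counterpart of~\eqref{eq:52}:
\begin{equation*}
\begin{aligned}
\inner[auto]{\bard\bu_h^n - \vec\eta^T\bard\bU_h^{n-1}}{\bard\bu_h^n} &+ a\inner[auto]{\bu_h^n - \vec\eta^T\bU_h^{n-1}}{\bard\bu_h^n} + j_h\inner[auto]{p_h^n - \vec\eta^T P_h^{n-1}}{\bard p_h^n} \\
&= \inner[auto]{\bf^n - \vec\eta^T[\bf^{n-1};\dotsc;\bf^{n-6}]}{\bard\bu_h^n} ,
\end{aligned}
\end{equation*}
where $\bard\bU_h^{n-1} = [\bard\bu_h^{n-1};\dotsc;\bard\bu_h^{n-6}]$ and $\bU_h^{n-1}$, $P_h^{n-1}$ are as in the proof of Theorem~\ref{th:velocity_bdf6}.

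Then I would multiply by $\tau$ and sum over $n$. Because the relaxed multiplier fails the strict positivity condition~\eqref{eq:pos-assump}, none of the three bilinear-form sums can be telescoped pointwise as in~\eqref{eq:54}--\eqref{eq:56}; instead, exactly as in the proof of Theorem~\ref{th:velocity_bdf6}, each of $\tau\sum_n a\inner{\bu_h^n - \vec\eta^T\bU_h^{n-1}}{\bard\bu_h^n}$ and $\tau\sum_n j_h\inner{p_h^n - \vec\eta^T P_h^{n-1}}{\bard p_h^n}$ is recast, via Lemma~\ref{lemma:Dahlquist1} together with the banded Toeplitz matrix of~\cite[Section~3.1]{Akrivis2021a}, into a $G$-norm telescoping part ($|\bU_h^N|_F^2$, $|P_h^N|_M^2$ in the notation of~\eqref{eq:53}) plus a non-negative weighted double sum --- non-negativity coming from the relaxed positivity~\eqref{eq:relax-pos-assump} --- minus a fixed number of boundary contributions in $\bu_h^5,\dotsc,\bu_h^9$ and $p_h^5,\dotsc,p_h^9$. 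The new velocity-derivative sum $\tau\sum_n\inner{\bard\bu_h^n - \vec\eta^T\bard\bU_h^{n-1}}{\bard\bu_h^n}$ is treated by the same device applied to the sequence $(\bard\bu_h^n)_n$: here the relevant Toeplitz symbol is precisely the left-hand side of~\eqref{eq:relax-pos-assump}, so the sum is bounded below by $c_0\,\tau\sum_n\norm{\bard\bu_h^n}_H^2$ minus boundary terms in $\bard\bu_h^5,\dotsc,\bard\bu_h^9$. The forcing sum on the right is absorbed with Cauchy--Schwarz and a scaled Young's inequality, the velocity-derivative part being kicked back to the left.

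Finally, as in~\eqref{eq:58}--\eqref{eq:61}, I would close the estimate by restoring the missing low indices and bounding the boundary terms: testing~\eqref{eq:33} at each of $n = q, q+1, q+2$ with $\bv_h = \bard\bu_h^n$ and $q_h = \tfrac{\delta_0}{\tau}p_h^n$, then using $\delta_0 > 0$, assumption~\eqref{eq:discretely_div_free} and a kick-back, controls $\tau\norm{\bard\bu_h^n}_H^2 + \norm{\bu_h^n}_V^2 + \abs{p_h^n}_{j_h}^2$ for those $n$ in terms of $\{\bu_h^i, p_h^i\}_{i=0}^{q-1}$ and $\{\bf^i\}$ alone, since every $\bu_h^n, p_h^n$ with $n \leqslant q + 3 = 9$ is itself determined recursively from the initial data through~\eqref{eq:27}. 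Combining all contributions and invoking the $G$-norm equivalence~\eqref{eq:32} gives~\eqref{eq:acceleration_stability_bdf6}. I expect the main obstacle to be the bookkeeping of these boundary and low-index terms: unlike the BDF~1--5 case, no single telescopic identity isolates the initial vector, and the Toeplitz argument spills over several extra layers of indices --- for the $a$-, $j_h$- and velocity-derivative parts alike --- each of which must be re-expressed through~\eqref{eq:27} in terms of the genuine initial data while keeping the kick-back constants positive (which again rests on $\delta_0 > 0$ and on the strict positivity implied by~\eqref{eq:relax-pos-assump}); this is, however, entirely analogous to what is carried out in~\cite[Section~3.1--3.2]{Akrivis2021a} and in the proof of Theorem~\ref{th:velocity_bdf6}.
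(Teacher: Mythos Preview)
Your proposal is correct and follows essentially the same route as the paper's proof. One clarification worth making: the $a$- and $j_h$-sums are handled by $G$-stability (Lemma~\ref{lemma:Dahlquist1}) \emph{alone}, which already gives clean pointwise telescoping into the $G$-norms $|\bU_h^n|_F^2$ and $|P_h^n|_M^2$ with no Toeplitz argument and no extra boundary layers in $\bu_h^5,\dotsc,\bu_h^9$; only the velocity-derivative sum $\sum_n\inner{\bard\bu_h^n - \vec\eta^T\dot\bU_h^{n-1}}{\bard\bu_h^n}$ genuinely requires the Toeplitz treatment, since there the multiplier acts on a sequence tested against itself rather than against its discrete derivative.
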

\begin{proof}
    The procedure is once again analogous to the proof of Theorem~\ref{th:acceleration}. Similar to the calculations in \eqref{eq:48}--\eqref{eq:52} we have to sum back in time to be able to use the multiplier technique. Testing at every time step with $\bard\bu_h^{n}$ and $\bard p_h^{n}$ yields
    \begin{equation}
        \label{eq:bdf-6-est-II}
        \begin{aligned}
            \left[\inner[auto]{\bard\bu_h^{n}}{\bard\bu_h^n}+a\inner[auto]{\bu_h^{n}}{\bard\bu_h^n}+j_h\inner[auto]{p_h^{n}}{\bard p_h^n}\right]&=\inner[auto]{\bf^{n}}{\bard\bu_h^n} , \\
            -\eta_1\left[\inner[auto]{\bard\bu_h^{n-1}}{\bard\bu_h^n}+a\inner[auto]{\bu_h^{n-1}}{\bard\bu_h^n}+j_h\inner[auto]{p_h^{n-1}}{\bard p_h^n}\right]&=-\eta_1\inner[auto]{\bf^{n-1}}{\bard\bu_h^n} , \\
            -\eta_2\left[\inner[auto]{\bard\bu_h^{n-2}}{\bard\bu_h^n}+a\inner[auto]{\bu_h^{n-2}}{\bard\bu_h^n}+j_h\inner[auto]{p_h^{n-1}}{\bard p_h^n}\right]&=-\eta_2\inner[auto]{\bf^{n-2}}{\bard\bu_h^n} , \\
            -\eta_3\left[\inner[auto]{\bard\bu_h^{n-3}}{\bard\bu_h^n}+a\inner[auto]{\bu_h^{n-3}}{\bard\bu_h^n}+j_h\inner[auto]{p_h^{n-3}}{\bard p_h^n}\right]&=-\eta_3\inner[auto]{\bf^{n-3}}{\bard\bu_h^n} .
        \end{aligned}
    \end{equation}
    
    Summing up the identities in \eqref{eq:bdf-6-est-II},
    we see that
    \begin{equation}
        \begin{aligned}
            \inner[auto]{\bard\bu_h^n-\vec\eta^T \dot\bU_h^{n-1}}{\bard\bu_h^{n}}&+a\inner[auto]{\bu_h^{n}-\vec\eta^T \bU_h^{n-1}}{\bard\bu_h^n}\\
            &+j_h\inner[auto]{p_h^{n}-\vec\eta^T \bP_h^{n-1}}{\bard p_h^n}=\inner[auto]{\bf^{n}-\vec\eta^T \bF_h^{n-1}}{\bard\bu_h^n},
            \label{eq:BDF-6 - 55}
        \end{aligned}
    \end{equation}
    where we set $\dot\bU_h^{n-1} \coloneqq [\bard\bu_h^{n-1} ; \dotsc ; \bard\bu_h^{n-6}]$ and $\bF^{n-1} \coloneqq [\bf^{n-1} ; \dotsc ; \bf^{n-6}]$.
    Then $G$-stability for the generalized multiplier yields
    \begin{equation}
        \begin{aligned}
            a\inner[auto]{\bu_h^{n}-\vec\eta^T \bU_h^{n-1}}{\bard\bu_h^n}+j_h\inner[auto]{p_h^{n}-\vec\eta^T P_h^{n-1}}{\bard p_h^n} \\
            \geqslant\frac{1}{\tau}\Bigl( |\bU_h^{n}|^2_F - |\bU_h^{n-1}|^2_F \Bigr) + \frac{1}{\tau}\Bigl( |P_h^{n}|^2_M - |P_h^{n-1}|^2_M \Bigr).
        \end{aligned}
    \end{equation}
    Finally, the remaining terms can be handled in the same way as in
    the previous proof using the properties of the associated Toepliz
    matrix, leading to
    \begin{equation}
        \begin{aligned}
            \sum_{n=9}^N\inner[auto]{\bard\bu_h^n-\vec\eta^T \dot\bU_h^{n-1}}{\bard\bu_h^{n}}\geqslant\frac{1}{32}\sum_{n=9}^N \norm{\bard\bu_h^n}^2-\inner{\bard\bu_h^9}{\mu_1\bard\bu_h^8+\mu_2\bard\bu_h^7 + \mu_3 \bard\bu_h^6}\\
         -\inner{\bard\bu_h^{10}}{\mu_2\bard\bu_h^8 + \mu_3 \bard\bu_h^7}-\inner{\bard\bu_h^{11}}{\mu_3\bard\bu_h^8} .
        \end{aligned}
    \end{equation}
     Bounding the components involving forcing terms and starting approximations can be done as explained in \cite[equation~(3.30)--(3.32)]{Akrivis2021a}, which are natural extensions of what seen in \eqref{eq:58}. The final estimate then again follows from the properties of the $G$-norms.
\end{proof}

Since  the pressure stability in \eqref{eq:66} is independent of the given BDF order, Theorem~\ref{th:velocity_bdf6} and Theorem~\ref{th:acceleration_bdf6} immediately imply the corresponding stability estimate for the pressure in the BDF-$6$ case.

\section{Convergence}
\label{sec:convergence}
In this section, we derive a priori error estimates for the fully discrete formulation \eqref{eq:31} for BDF schemes of order $1$ to $6$.
Thanks to the linearity of the transient Stokes problem and the stability estimates provided in Section~\ref{sec:stability} and Section~\ref{sec:stab-bdf6},
we can follow the standard recipe, see for instance~\cite[Chapter~73]{Ern2021a}:
First, we split the total discretization error into an interpolation and a discrete error.
Then we derive an error equation for the latter which has the form of the fully discrete transient Stokes problem
but with a ``small'' right-hand side (the  ``defect'') which encodes interpolation and consistency errors in space and time.
Finally, the stability estimates allow us to deduce convergence rates for the 
velocity and pressure approximation error from the corresponding rates for the defect.
Throughout this Section we will assume our domain $\Omega$ satisfies the required smoothness properties.

To obtain optimal error estimates we use the Ritz projection $(S_h^\bu(t_n),\; S_h^p(t_n)) \coloneqq \cS_h(\bu(t_n),\; p(t_n)) \in V_h \times Q_h$
defined in \eqref{eq:Ritz projection - eq}.
We decompose the velocity and pressure error into an interpolation error and a discrete error,
\begin{subequations}
    \begin{align}
        \bu(t_n)-\bu_h^n = &\ \underbrace{\bu(t_n)-S_h^\bu(t_n)}_{=:\be_\pi^n} + \underbrace{S_h^\bu(t_n)-\bu_h^n}_{=:\be_h^n} , \\
        p(t_n)-p_h^n = &\ \underbrace{p(t_n)-S_h^p(t_n)}_{=:\delta_\pi^n} + \underbrace{S_h^p(t_n)-p_h^n}_{=:\delta_h^n} .
    \end{align}
    \label{eq:67}
\end{subequations}
Recall that the discrete solution satisfies the discrete weak formulation
\begin{equation}
    \inner[auto]{\bard\bu_h^n}{\bv_h}+a\inner[auto]{\bu_h^n}{\bv_h}+b\inner[auto]{p_h^n}{\bv_h}-b\inner[auto]{q_h}{\bu_h^n}+j_h\inner[auto]{q_h}{p_h^n}=\inner[auto]{\bf^n}{\bv_h}
    \label{eq:68}
\end{equation}
for $(\bv_h, q_h) \in V_h \times Q_h$, cf.~\eqref{eq:33}.
Inserting the projected velocity $S_h^\bu(t_n)$ and projected
pressure $S_h^p(t_n)$ in place of the discrete solution into
\eqref{eq:68} and using the properties of the defining equation
\eqref{eq:Ritz projection - eq}, we deduce that the Ritz projection 
satisfies the fully-discrete transient Stokes equation up to some defect.
\begin{equation}
    \begin{aligned}
        \inner[auto]{\bard S_h^\bu(t_n)}{\bv_h}+a\inner[auto]{S_h^\bu(t_n)}{\bv_h}+b\inner[auto]{S_h^p(t_n)}{\bv_h}-b\inner[auto]{q_h}{S_h^\bu(t_n)}\\
        +j_h\inner[auto]{q_h}{S_h^p(t_n)}=\inner[auto]{\bf^n}{\bv_h}+\inner[auto]{\bd^n}{\bv_h},
    \end{aligned}
    \label{eq:69}
\end{equation}
where the defect $\bd^n$ is given by
\begin{equation}
    \bd^n= \bard S_h^\bu(t_n) - \partial_t\bu(t_n) = \bard \bu(t_n)-\partial_t\bu(t_n) -\bard\be_\pi^n.
\end{equation}
Thanks to linearity of the problem,
subtracting \eqref{eq:68} from \eqref{eq:69} leads to
\begin{equation}
    \inner[auto]{\bard\be_h^n}{\bv_h}+a\inner[auto]{\be_h^n}{\bv_h}+b\inner[auto]{\delta_h^n}{\bv_h}-b\inner[auto]{q_h}{\be_h^n}+j_h\inner[auto]{q_h}{\delta_h^n}=\inner[auto]{\bd^n}{\bv_h}.
    \label{eq:70}
\end{equation}
We are now in a similar situation to \eqref{eq:33} and it is enough to
bound the right-hand side in an appropriate way and to use the
stability estimates derived in \autoref{th:velocity},
\autoref{th:acceleration}, and \autoref{th:pressure}, or their
corresponding counterparts for BDF-$6$ from Section~\ref{sec:stab-bdf6}.
To this end, the defect satisfies the bound
\begin{equation}
    \begin{aligned}
        \norm{\bd^n}_H^2
        \leqslant \dfrac{C}{\tau} \Bigl( \tau^{2q}\norm{\partial_t^{(q+1)}\bu}_{L^2((t_{n-q},t_n);H)}^2 + h^{2r_\bu}\norm{\partial_t\bu}_{L^2((t_{n-q},t_n);H^{r_\bu}(\Omega))}^2 \Bigr),
        \label{eq:71}
    \end{aligned}
\end{equation}
which follows from standard approximation results, and a step-by-step derivation is provided in Appendix~\ref{sec:rhs_conv_bound_velocity}
for the reader's convenience. 
Consequently,
\begin{equation}
    \begin{aligned}
        \norm{\bd^\tau}_{\ell^2(J_q;H)}\leqslant  C \Bigl( \tau^{2q}\norm{\partial_t^{(q+1)}\bu}_{L^2(J;H)}^2 + h^{2r_\bu}\norm{\partial_t\bu}_{L^2(J;H^{r_\bu}(\Omega))}^2 \Bigr),
        \label{eq:71a}
    \end{aligned}
\end{equation}
where we wish to recall that $r_\bu \myeq \min\{r, k+1\}$, where
$\bu\in H^r(\Omega)$, and $k$ is the polynomial order of $V_h=V_h^k$,
see \eqref{eq:17a}. Analogously, we have $s_p\myeq\min\{ s, \tilde{l},
l+1\}$, where $p\in H^s(\Omega)$, $l$ is the polynomial order of
$Q_h=Q_h^l$ (see \eqref{eq:17b}), and $\tilde{l}$ is the order of weak
consistency of the stabilization. As a result, we obtain the following
estimate for the $\ell^{\infty}(J; H)$ norm of the velocity error.
\begin{theorem}
    \label{th: velocity consistency}
    Assume that $\bu\in H^1(J;[H^r(\Omega)]^d)\cap H^{q+1}(J;[L^2(\Omega)]^d)$ and $p\in C^0(J;H^s(\Omega))$ with $r\geqslant2$ and $s\geqslant1$.
    Let the initial values $\bu_h^i\in V_h$, $i=0,\dotsc ,q-1$ satisfy the approximation property \eqref{eq:20a}. Then the fully discrete numerical solution \eqref{eq:27} using BDF method of order $1,\dotsc,6$ satisfies the velocity error estimates
    \begin{equation}
        \begin{aligned}
        \norm{\bu_h^N-\bu(t_N)}_H^2\leqslant &\ C\Biggl[h^{2r_\bu}\norm{\bu(t_N)}_{r_\bu}^2\\
        &\ \hphantom{C\Biggl[} +\sum_{i=0}^{q-1}\Bigl((1+\nu) h^{2r_\bu}\norm{\bu(t_i)}_{r_\bu}^2 + \nu^{-1}h^{2(s_p+1)}\norm{p(t_i)}_{s_p}^2 \Bigr) \\
        &\ \hphantom{C\Biggl[} + \frac{c_P^2}{\nu}\Bigl( \tau^{2q}\norm{\partial_t^{(q+1)}\bu}_{L^2(J;H)}^2 +h^{2r_\bu}\norm{\partial_t\bu}_{L^2(J;H^{r_\bu}(\Omega))}^2 \Bigr)\Biggr],
        \end{aligned}
        \label{eq:velocity_cons_interp}
    \end{equation}
    \begin{equation}
        \begin{aligned}
        \norm{\bu_h^\tau-\bu}_{\ell^2(J_q;V)}^2\leqslant &\ C\Biggl[\nu h^{2(r_\bu-1)}\norm{\bu}_{\ell^2(J_q;H^{r_\bu})}^2\\
        &\ \hphantom{C\Biggl[} +\sum_{i=0}^{q-1}\Bigl((1+\nu) h^{2r_\bu}\norm{\bu(t_i)}_{r_\bu}^2 + \nu^{-1}h^{2(s_p+1)}\norm{p(t_i)}_{s_p}^2 \Bigr) \\
        &\ \hphantom{C\Biggl[} + \frac{c_P^2}{\nu}\Bigl( \tau^{2q}\norm{\partial_t^{(q+1)}\bu}_{L^2(J;H)}^2 +h^{2r_\bu}\norm{\partial_t\bu}_{L^2(J;H^{r_\bu}(\Omega))}^2 \Bigr)\Biggr].
        \end{aligned}
        \label{eq:velocity__bochner_cons_interp}
    \end{equation}

\end{theorem}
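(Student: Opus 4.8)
The plan is to transfer the stability estimate of \autoref{th:velocity} (and, when $q=6$, of \autoref{th:velocity_bdf6}) to the discrete error via the Ritz splitting introduced in \eqref{eq:67}. Since $\bu(t_n)-\bu_h^n=\be_\pi^n+\be_h^n$, it suffices to bound $\be_\pi^n$ by the Ritz error estimates \eqref{eq:23}--\eqref{eq:24} and to control $\be_h^n$ through the error equation \eqref{eq:70}. The key observation, already noted after \eqref{eq:70}, is that $(\be_h^n,\delta_h^n)$ solves a problem of exactly the same structure as the fully discrete scheme \eqref{eq:27} in the equivalent form \eqref{eq:33}, with the forcing $\bf^n$ replaced by the defect $\bd^n$ and with starting values $\be_h^i=S_h^\bu(t_i)-\bu_h^i$, $i=0,\dots,q-1$. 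Hence \autoref{th:velocity} applies verbatim and gives
\begin{equation*}
    \norm{\be_h^N}_H^2 + \norm{\be_h^\tau}_{\ell^2(J_q;V)}^2 + \abs{\delta_h^\tau}_{\ell^2(J_q;j_h)}^2 \leqslant C\Biggl[\sum_{i=0}^{q-1}\norm{\be_h^i}_H^2 + \frac{c_P^2}{\nu}\norm{\bd^\tau}_{\ell^2(J_q;H)}^2\Biggr].
\end{equation*}
Note that no discretely divergence-free assumption on the starting values is needed here, since \autoref{th:velocity} requires none; the identical reduction using \autoref{th:velocity_bdf6} covers $q=6$.

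It then remains to estimate the two terms on the right. For the defect I would invoke the consistency bound \eqref{eq:71}--\eqref{eq:71a}, which trades one power of $\tau^{-1}$ on each subinterval against the BDF truncation error $\tau^{2q}\norm{\partial_t^{(q+1)}\bu}^2$ plus the interpolation error $h^{2r_\bu}\norm{\partial_t\bu}_{H^{r_\bu}}^2$ of $\bard\be_\pi^n$; after multiplying by $\tau$, summing over $n=q,\dots,N$ and using the bounded overlap of the subintervals $(t_{n-q},t_n)$, this yields the $\tfrac{c_P^2}{\nu}\bigl(\tau^{2q}\norm{\partial_t^{(q+1)}\bu}_{L^2(J;H)}^2+h^{2r_\bu}\norm{\partial_t\bu}_{L^2(J;H^{r_\bu})}^2\bigr)$ contribution in \eqref{eq:velocity_cons_interp}--\eqref{eq:velocity__bochner_cons_interp}.

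For the starting errors I would write $\be_h^i=-\be_\pi^i+(\bu(t_i)-\bu_h^i)$ and apply the triangle inequality. The first piece is controlled by the improved $H$-norm Ritz estimate \eqref{eq:24} combined with \eqref{eq:23a} at $\alpha=0$, so that $\norm{\be_\pi^i}_H\leqslant Ch\bigl(\norm{\be_\pi^i}_V+\abs{S_h^p(t_i)}_{j_h}\bigr)\leqslant C\bigl(\nu^{1/2}h^{r_\bu}\norm{\bu(t_i)}_{r_\bu}+\nu^{-1/2}h^{s_p+1}\norm{p(t_i)}_{s_p}\bigr)$; the second piece is controlled by the assumed approximation property \eqref{eq:20a} of the initial data, $\norm{\bu(t_i)-\bu_h^i}_H\leqslant C_\cI h^{r_\bu}\norm{\bu(t_i)}_{r_\bu}$. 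Squaring and summing over $i=0,\dots,q-1$ produces precisely the $\sum_{i=0}^{q-1}\bigl((1+\nu)h^{2r_\bu}\norm{\bu(t_i)}_{r_\bu}^2+\nu^{-1}h^{2(s_p+1)}\norm{p(t_i)}_{s_p}^2\bigr)$ term.

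To conclude, I would split the total error once more, $\norm{\bu(t_N)-\bu_h^N}_H\leqslant\norm{\be_\pi^N}_H+\norm{\be_h^N}_H$ and $\norm{\bu-\bu_h^\tau}_{\ell^2(J_q;V)}\leqslant\norm{\be_\pi^\tau}_{\ell^2(J_q;V)}+\norm{\be_h^\tau}_{\ell^2(J_q;V)}$, bound the discrete parts by the displayed estimate above, and bound the interpolation parts $\be_\pi$ by the Ritz estimates: \eqref{eq:24}+\eqref{eq:23a} at $t_N$ for the $H$-norm and a time-summed version of \eqref{eq:23a} for the $\ell^2(J_q;V)$-norm, which contribute the leading $h^{2r_\bu}\norm{\bu(t_N)}_{r_\bu}^2$ and $\nu h^{2(r_\bu-1)}\norm{\bu}_{\ell^2(J_q;H^{r_\bu})}^2$ terms (the remaining pressure-interpolation contributions being of a form already present in the bound). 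The main difficulty I anticipate is not conceptual but bookkeeping: carefully tracking the powers of $h$, $\nu$ and $\tau$ through the defect bound \eqref{eq:71} (whose derivation combines a Taylor expansion of the BDF operator with the Ritz interpolation estimate and is deferred to the appendix) and through the starting-value contributions, so that the weights match \eqref{eq:velocity_cons_interp}--\eqref{eq:velocity__bochner_cons_interp}; a minor point is to remember to invoke \autoref{th:velocity_bdf6} rather than \autoref{th:velocity} in the case $q=6$.
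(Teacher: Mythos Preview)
Your proposal is correct and follows essentially the same route as the paper: apply the stability bound of \autoref{th:velocity} (respectively \autoref{th:velocity_bdf6}) to the error equation~\eqref{eq:70}, estimate the starting errors $\norm{\be_h^i}_H$ via the triangle inequality using \eqref{eq:20a} and \eqref{eq:23}--\eqref{eq:24}, insert the defect bound~\eqref{eq:71a}, and finish with a triangle inequality plus the Ritz/interpolation estimates for $\be_\pi$. The only cosmetic difference is that the paper cites~\eqref{eq:20a} for the final $\norm{\be_\pi^N}_H$ and $\norm{\be_\pi^\tau}_{\ell^2(J_q;V)}$ bounds whereas you (more carefully) invoke \eqref{eq:24}+\eqref{eq:23a}; either way the resulting rates match.
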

\begin{proof}
    The stability bounds from \autoref{th:velocity} and \autoref{th:velocity_bdf6}
    in conjunction with error equation~\eqref{eq:70} allows us to deduce that
    the discrete error is bounded by
    \begin{equation}
        \norm{\be_h^N}_H^2 +\norm{\be_h^\tau}_{\ell^2(J_q;V)}^2 + \abs{\delta_h^\tau}_{\ell^2(J_q;j_h)}^2\leqslant \; C\Biggl[\sum_{i=0}^{q-1}\norm{\be_h^i}_H^2+\frac{c_P^2}{\nu}\norm{\bd^\tau}_{\ell^2(J_q;H)}^2\Biggr].
        \label{eq:73}
    \end{equation}
    The only thing to clarify now is the order of convergence of $\norm{\be_h^i}_H^2$ for $i=0,\dotsc,q-1$. 
    For a general interpolant satisfying assumption~\eqref{eq:20}, we have
    \begin{equation}
        \norm{\be_h^i}_H^2 = \norm{S_h^\bu(t_i)- \cI_h^\bu\bu(t_i)}_H^2\leqslant \norm{S_h^\bu(t_i)-\bu(t_i)}_H^2 + \norm{\bu(t_i) - \cI_h^\bu\bu(t_i)}_H^2 .
        \label{eq:76}
    \end{equation}
    Invoking \eqref{eq:23}, \eqref{eq:24} and \eqref{eq:20} now leads to
    \begin{equation}
        \norm{\be_h^i}_H^2 \leqslant C\bigl[ (1+\nu) h^{2r_\bu}\norm{\bu(t_i)}_{r_\bu}^2 + \nu^{-1}h^{2(s_p+1)}\norm{p(t_i)}_{s_p}^2 \bigr].
        \label{eq:77}
    \end{equation}
   Inserting the bound for the defect given in~\eqref{eq:71}, shows that the discrete velocity error is bounded by
    \begin{equation}
        \begin{aligned}
        \norm{\be_h^N}_H^2 + \norm{\be_h^{\tau}}_{\ell^2(J_q,V)}^2
        \leqslant &\ C\Biggl[\sum_{i=0}^{q-1}\Bigl((1+\nu) h^{2r_\bu}\norm{\bu(t_i)}_{r_\bu}^2 + \nu^{-1}h^{2(s_p+1)}\norm{p(t_i)}_{s_p}^2\Bigr) \\
        &\ \hphantom{C\Biggl[} + \frac{c_P^2}{\nu}\Bigl( \tau^{2q}\norm{\partial_t^{(q+1)}\bu}_{L^2(J;H)}^2 +h^{2r_\bu}\norm{\partial_t\bu}_{L^2(J;H^{r_\bu}(\Omega))}^2 \Bigr)\Biggr].
        \end{aligned}
        \label{eq:78}
    \end{equation}
    To conclude the proof of the first estimate~\eqref{eq:velocity_cons_interp}, 
    we simply combine a triangle inequality 
    $
        \norm{\bu_h^N-\bu(t_N)}_H\leqslant \norm{\be_h^N}_H + \norm{\be_\pi^N}_H
    $
   with the bound 
   $\norm{\be_\pi^N}_H\leqslant Ch^{r_\bu}\norm{\bu(t_N)}_{r_\bu}$, which
   follows directly from the interpolation estimate~\eqref{eq:20a}.
   The same estimate also implies that
      $\norm{\be^\tau_{\pi}}_{\ell^2(J_q;V)}^2\leqslant C\nu h^{2(r_\bu-1)}\norm{\bu}_{\ell^2(J_q;H^{r_\bu})}^2$,
    which in conjunction with the triangle inequality
    $
    \norm{\bu_h^\tau-\bu}_{\ell^2(J_q;V)}\leqslant \norm{\be_h^\tau}_{\ell^2(J_q;V)} +\norm{\be^\tau_{\pi}}_{\ell^2(J_q;V)} 
    $ leads to our second estimate \eqref{eq:velocity__bochner_cons_interp}.
\end{proof}

\begin{theorem}
    \label{th: pressure consistency}
    Assume that $\bu\in H^1(J;[H^r(\Omega)]^d)\cap
    H^{q+1}(J;[L^2(\Omega)]^d)$ and $p\in C^0(J;H^s(\Omega))$ with
    $r\geqslant2$ and $s\geqslant1$. Set initial values as Ritz
    projection $(\bu_h^i,\; p_h^i) = (R_h^\bu(t_i),\; R_h^p(t_i)) =
    \cR_h(\bu(t_i),\; 0)$ thus satisfying both
    \eqref{eq:discretely_div_free} and \eqref{eq:20a}. Then the fully
    discrete numerical solution \eqref{eq:27} using BDF method of
    order $1,\dotsc,6$ satisfies the pressure error estimate
    \begin{equation}
        \begin{aligned}
            \norm{p-p_h^\tau}_{\ell^2(J_q;Q)}^2\leqslant &\ C\Biggl[\nu^{-1}h^{2(l+1)}\norm{p}_{\ell^2(J_q;H^{l+1})}^2 \\
            &\ \hphantom{C\Biggl[} + \nu h^{2r_\bu} \big( 1 + c_P^2 \nu^{-1} h^{-2} \big) \sum_{i=0}^{q-1} \norm{\bu(t_i)}_{r_\bu}^2 \\
            &\ \hphantom{C\Biggl[} + \nu^{-1}h^{2s_p} \big( h^2 + c_P^2 \nu^{-1} \big) \sum_{i=0}^{q-1} \norm{p(t_i)}_{s_p}^2 \\
            &\ \hphantom{C\Biggl[} +\frac{c_P^2}{\nu}\Bigl(\tau^{2q}\norm{\partial_t^{(q+1)}\bu}_{L^2(J;H)}^2 +h^{2r_\bu}\norm{\partial_t\bu}_{L^2(J;H^{r_\bu}(\Omega))}^2 \Bigr)\Biggr].
        \end{aligned}
        \label{eq:pressure_cons_ritz}
    \end{equation}

\end{theorem}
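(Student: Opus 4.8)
The plan is to transfer the pressure stability estimate of Theorem~\ref{th:pressure} to the error equation, exactly in the spirit of the velocity analysis of Theorem~\ref{th: velocity consistency}. Concretely, I would use the Ritz splitting \eqref{eq:67}, write $\norm{p-p_h^\tau}_{\ell^2(J_q;Q)}\leqslant\norm{\delta_\pi^\tau}_{\ell^2(J_q;Q)}+\norm{\delta_h^\tau}_{\ell^2(J_q;Q)}$, control the interpolation part $\delta_\pi^n=p(t_n)-S_h^p(t_n)$ by projection estimates, and control the discrete part $\delta_h^n=S_h^p(t_n)-p_h^n$ by stability.

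For the discrete part, recall that the pair $(\be_h^n,\delta_h^n)$ solves the error equation \eqref{eq:70}, which is structurally the fully discrete Stokes problem \eqref{eq:33} with $\bf^n$ replaced by the defect $\bd^n$. Since the initial data is the Ritz projection $(\bu_h^i,p_h^i)=\cR_h(\bu(t_i),0)$, subtracting the incompressibility equation of \eqref{eq:Ritz projection - eq} written for $\cS_h(\bu(t_i),p(t_i))$ and for $\cR_h(\bu(t_i),0)$ shows that $(\be_h^i,\delta_h^i)$ again satisfies the discrete divergence-free relation \eqref{eq:discretely_div_free}; hence Theorem~\ref{th:pressure}, together with its BDF-6 counterpart (valid since \eqref{eq:66} is independent of the BDF order), applies to \eqref{eq:70} and gives
\[ \norm{\delta_h^\tau}_{\ell^2(J_q;Q)}^2\leqslant C\Biggl[\sum_{i=0}^{q-1}\Bigl(\norm{\be_h^i}_H^2+\frac{c_P^2}{\nu}\norm{\be_h^i}_V^2+\frac{c_P^2}{\nu}\abs{\delta_h^i}_{j_h}^2\Bigr)+\frac{c_P^2}{\nu}\norm{\bd^\tau}_{\ell^2(J_q;H)}^2\Biggr]. \]
The defect term reproduces the last line of \eqref{eq:pressure_cons_ritz} by \eqref{eq:71a}. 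For the initial discrete errors I would write $\be_h^i=(S_h^\bu(t_i)-\bu(t_i))+(\bu(t_i)-\bu_h^i)$ and similarly for $\delta_h^i$, and estimate each piece with the Ritz error bounds \eqref{eq:23}, the improved $L^2$-bound \eqref{eq:24}, and the approximation property \eqref{eq:20a} of the Ritz initial data, which yields $\norm{\be_h^i}_H^2\leqslant C\bigl((1+\nu)h^{2r_\bu}\norm{\bu(t_i)}_{r_\bu}^2+\nu^{-1}h^{2(s_p+1)}\norm{p(t_i)}_{s_p}^2\bigr)$ and $\norm{\be_h^i}_V^2+\abs{\delta_h^i}_{j_h}^2\leqslant C\bigl(\nu h^{2(r_\bu-1)}\norm{\bu(t_i)}_{r_\bu}^2+\nu^{-1}h^{2s_p}\norm{p(t_i)}_{s_p}^2\bigr)$. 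Multiplying the second estimate by $c_P^2/\nu$ and regrouping the contributions by $\norm{\bu(t_i)}_{r_\bu}^2$ and $\norm{p(t_i)}_{s_p}^2$ reproduces the middle two lines of \eqref{eq:pressure_cons_ritz}.

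It then remains to bound the interpolation part $\delta_\pi^\tau$ in the $\ell^2(J_q;Q)$-norm, for which I would use the Ritz pressure error estimate \eqref{eq:23b} together with the interpolation estimate \eqref{eq:19} and, exploiting the domain smoothness assumed in this section, the improved $L^2$ estimate \eqref{eq:24}; after squaring and summing in time this produces the leading term $\nu^{-1}h^{2(l+1)}\norm{p}_{\ell^2(J_q;H^{l+1})}^2$ of \eqref{eq:pressure_cons_ritz}. Combining the bounds for $\delta_\pi^\tau$ and $\delta_h^\tau$ via the triangle inequality then completes the proof, uniformly for BDF $1$--$6$.

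I expect the main obstacle to be the careful bookkeeping of the powers of $h$, $\nu$ and $c_P$ as the three initial-data contributions propagate through the $c_P^2/\nu$-weighted pressure stability bound: in particular, one must carry the $h^{-1}$ factor produced by measuring $\be_h^i$ and $\delta_h^i$ in the $V$- and $j_h$-seminorms rather than in the $H$-norm, which is precisely what gives rise to the mild inverse-CFL factor $c_P^2\nu^{-1}h^{-2}$ multiplying $\sum_{i=0}^{q-1}\norm{\bu(t_i)}_{r_\bu}^2$ in \eqref{eq:pressure_cons_ritz}. A secondary technical point, needed to obtain the sharp $h^{l+1}$ rate for the pressure interpolation error rather than the coarser rate delivered directly by \eqref{eq:23b}, is the use of the duality (Aubin--Nitsche type) arguments underlying \eqref{eq:24}.
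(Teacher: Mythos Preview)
Your approach is essentially the paper's: split via \eqref{eq:67}, apply the pressure stability (Theorem~\ref{th:pressure} and its BDF-6 analogue) to the error equation \eqref{eq:70}, then estimate the initial discrete errors and the defect. Two minor points of divergence: for $\norm{\be_h^i}_H^2$ the paper exploits that \emph{both} $S_h^\bu(t_i)$ and $R_h^\bu(t_i)$ are Ritz projections and applies \eqref{eq:24} to each piece, yielding the sharper coefficient $\nu h^{2r_\bu}$ (rather than your $(1+\nu)h^{2r_\bu}$ obtained via the generic bound \eqref{eq:20a}), which is what produces exactly the second line of \eqref{eq:pressure_cons_ritz}; and for $\delta_\pi^\tau$ the paper simply invokes the approximation estimate \eqref{eq:19} to get the $h^{l+1}$ rate, without any additional Aubin--Nitsche argument, so your anticipated ``secondary technical point'' is not needed.
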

\begin{proof}
    The triangle inequality leads to
    \begin{equation}
        \norm{p_h^\tau-p}_{\ell^2(J_q;Q)}^2 \leqslant \norm{\delta_h^\tau}_{\ell^2(J_q;Q)}^2 + \norm{\delta_\pi^\tau}_{\ell^2(J_q;Q)}^2.
        \label{eq:79}
    \end{equation}
    By \eqref{eq:19}, we already have that $\norm{\delta_\pi^\tau}_{\ell^2(J_q;Q)}^2\leqslant C\nu^{-1}h^{2(l+1)}\norm{p}_{\ell^2(J_q;H^{l+1})}^2$. Moreover, according to \autoref{th:pressure}, we have
    \begin{equation}
        \norm[auto]{\delta_h^\tau}_{\ell^2(J_q;Q)}^2 \leqslant C \Biggl[\sum_{i=0}^{q-1}\Bigl(\norm{\be_h^i}_H^2 +\frac{c_P^2}{\nu}\norm{\be_h^i}_V^2+\frac{c_P^2}{\nu}\abs{\delta_h^i}_{j_h}^2 \Bigr)+\frac{c_P^2}{\nu}\norm{\bd^\tau}_{\ell^2(J_q;H)}^2 \Biggr] .
        \label{eq:80}
    \end{equation}
    Thanks to \eqref{eq:24}, we have
    \begin{equation}
        \label{eq:80b}
        \norm{\be_h^i}_H^2\leqslant C \Bigl[ \nu h^{2r_\bu}\norm{\bu(t_i)}_{r_\bu}^2 + \nu^{-1}h^{2s_p+2}\norm{p(t_i)}_{s_p}^2 \Bigr] .
    \end{equation}
    Next, we use the properties of the Ritz projection \eqref{eq:23a} to deduce that
    \begin{equation}
        \begin{aligned}
            \norm{\be_h^i}_V^2 +\abs{\delta_h^i}_{j_h}^2  = &\ \norm{S_h^\bu(t_i)-R_h^\bu(t_i)}_V^2 + \abs{S_h^p(t_i) - R_h^p(t_i)}_{j_h}^2 \\
            \leqslant &\ \norm{S_h^\bu(t_i)-\bu(t_i)}_V^2 + \abs{S_h^p(t_i) }_{j_h}^2 \\
            &\ +\norm{R_h^\bu(t_i)-\bu(t_i)}_V^2 + \abs{R_h^p(t_i) }_{j_h}^2 \\
            \leqslant &\ C \Bigl[ \nu h^{2(r_\bu-1)}\norm{\bu(t_i)}_{r_\bu}^2 + \nu^{-1}h^{2s_p}\norm{p(t_i)}_{s_p}^2 \Bigr] .
        \end{aligned}
        \label{eq:80a}
    \end{equation}
    Inserting~\eqref{eq:80b}, \eqref{eq:80a} and \eqref{eq:71a}
    into~\eqref{eq:80} yields the desired estimate for the discrete pressure error,
        \begin{align}
        \nonumber
        \norm{\delta_h^\tau}_{\ell^2(J;Q)}^2\leqslant &\ C\Biggl[\sum_{i=0}^{q-1} \Biggl( \nu h^{2r_\bu}\norm{\bu(t_i)}_{r_\bu} + \nu^{-1}h^{2(s_p+1)}\norm{p(t_i)}_{s_p}\\
        &\ \hphantom{C\Biggl[} +\frac{c_P^2}{\nu}\Bigl( \nu h^{2(r_\bu-1)}\norm{\bu(t_i)}_{r_\bu}^2 + \nu^{-1}h^{2s_p}\norm{p(t_i)}_{s_p}^2\Bigr)\Biggr)
        \label{eq:81}
        \\
        &\ \hphantom{C\Biggl[} +\frac{c_P^2}{\nu}\Bigl(\tau^{2q}\norm{\partial_t^{(q+1)}\bu}_{L^2(J;H)}^2 +h^{2r_\bu}\norm{\partial_t\bu}_{L^2(J;H^{r_\bu}(\Omega))}^2 \Bigr)\Biggr] . 
        \nonumber
        \end{align}
\end{proof}

\section{General initial data and small time-step limit}
\label{sec:small_timestep}
In this section, we briefly discuss
how the observations and results from
\cite{Burman2009}
regarding the use of general initial data for the velocity
can be generalized to our setting.
As already discussed in \cite{Burman2009} for BDF-$1$, we note that
the bounds on the pressure and its error derived in Section
\ref{sec:stability} and Section \ref{sec:convergence} rely strongly on the discrete divergence-free
assumption \eqref{eq:discretely_div_free} for the initial values and on the properties of the
Ritz projection \eqref{eq:Ritz projection - eq}. These conditions
allow us to untangle the dependency of the first $q$ computed values
of $p_h$ from the dependency on the first $q$ initial velocity values given
as input. This is clear considering that we can perform telescoping
only until \eqref{eq:50} holds. 

If the initial values do not satisfy
\eqref{eq:discretely_div_free}, the range of validity of \eqref{eq:55}
is restricted to $2q\leqslant n \leqslant N$. For the steps
$q\leqslant n \leqslant 2q-1$ we can argue similarly to what is done in
\eqref{eq:58} but we have to give up on the substitution of the
$b\inner{\cdot}{\cdot}$ form with $j_h\inner{\cdot}{\cdot}$
stabilization, leaving some bilinear forms
$b\inner{\cdot}{\cdot}$ explicit. Instead of
\eqref{eq:acceleration_stability}, we end up with the estimate
\begin{equation}
    \begin{aligned}
        \norm{\bard\bu_h^\tau}_{\ell^2(J_q;H)}^2
        + \norm{\bu_h^N}_V^2+\abs{p_h^N}_{j_h}^2 
        &\leqslant 
        C\Biggl[\sum_{i=0}^{q-1}\norm{\bu_h^{i}}_V^2
        +\norm[auto]{\bf}_{\ell^2(J_q;H)}^2 
        \\
        & \qquad +\sum_{i,j=0}^{q-1}b\inner{p_h^{q+j}}{\bu_h^{i}}\Biggr].
    \end{aligned}
    \label{eq:82}
\end{equation}
The same behaviour is then passed to the pressure stability, which becomes
\begin{equation}
    \norm[auto]{p_h^{\tau}}_{\ell^2(J_q;Q)}^2 
    \leqslant C 
    \Biggl[
        \sum_{i=0}^{q-1}\Bigl( \norm{\bu_h^i}_H^2 
        +\frac{c_P^2}{\nu}\norm{\bu_h^{i}}_V^2 \Bigr) 
        +\frac{c_P^2}{\nu}\norm{\bf}_{\ell^2(J_q;H)}^2
        +\frac{c_P^2}{\nu}\sum_{i,j=0}^{q-1}b\inner{p_h^{q+j}}{\bu_h^{i}} 
        \Biggr].
    \label{eq:83}
\end{equation}

This highlights the close interplay between the initial data and the
first $q$ computed pressures. If the initial $\bu_h^i$ are not weakly
divergence-free in the sense of assumption~\eqref{eq:discretely_div_free},
then a conditional pressure stability has to be expected.
This is what happens for example in \cite{Burman2009}, where the only
information on $\bu_h^i$ is that it is a general interpolant of the
exact initial velocity $\bu(t_i)$, which is by definition strongly
divergence-free. To guarantee pressure stability, \cite{Burman2009}
bounds the remaining $b(\cdot, \cdot)$ in the following way,
\begin{equation}
    \begin{aligned}
    b\inner{p_h^{q+j}}{\bu_h^{i}}= &\ b\inner{p_h^{q+j}}{\bu_h^{i}-\bu(t_i)} \leqslant \norm{p_h^{q+j}}_Q\norm{\bu_h^{i}-\bu(t_i)}_V\\
    \leqslant &\ \norm{p_h^{q+j}}_Q \nu^{\frac{1}{2}} h^{r_\bu-1}\norm{\bu(t_i)}_{r_\bu}\leqslant \frac{\nu h^{2(r_\bu-1)}}{2}\norm{p_h^{q+j}}_Q^2 +\frac{1}{2}\norm{\bu(t_i)}_{r_\bu}^2.
    \end{aligned}
    \label{eq:84}
\end{equation}

Under the inverse CFL-type assumption $\tau>Ch^{2(r_\bu-1)}$, one can absorb the
pressure-dependent terms in  \eqref{eq:84} on the left-hand side of
\eqref{eq:83}, thus restoring pressure stability. It should be noticed that
this comes to the expense of not only of a weak inverse CFL-type condition but
also of a higher regularity assumption for stability which would not have come into play
otherwise. We would also like to emphasize that the non-fulfilment of this weak inverse CFL-type condition
does not result into abnormal solution behavior involving 
non-physical oscillations or blow-ups.
Instead, $\delta$-scaled Young's inequality can be used to ensure that
$\tau-\delta Ch^{2(r_\bu-1)} > 0$ which in turn leads to 
correspondingly growing error constants,
causing an increasing initial error for time step decreasing below the critical inverse CFL value.

\section{Numerical experiments}
\label{section:numerics}
We conclude our presentation with a number of numerical experiments which corroborate our theoretical findings.

\subsection{Convergence studies}
To confirm the convergence rates predicted by \autoref{th: velocity consistency} and \autoref{th: pressure consistency} numerically, we conducted
a series of numerical experiments where
we used a manufactured solution and then computed the errors between the exact solution and the approximated one.
We considered problem \eqref{eq:1} in two dimensions with $\Omega = [0, 1] \times [0, 1]$, final time $T = 1$, and non-homogeneous boundary conditions. 
For easier comparison with the results presented in \cite{Burman2009}, the right-hand side $f$ and the boundary and initial data are chosen such that the exact solution is given by
\begin{subequations}
\begin{align}
        &u(x, y, t) = g(t)\left(
        \begin{aligned}
                &\sin(\pi x - 0.7) \sin(\pi y + 0.2) \\
                &\cos(\pi x - 0.7) \cos(\pi y + 0.2)
        \end{aligned}\right)
        , \\
        &p(x, y, t) = g(t)(\sin(x) \cos(y) + (\cos(1) - 1) \sin(1)) , 
\end{align}
\label{eq:manufactured-solution}
\end{subequations}
with $g(t) = 1 + 5t + e^{-10t} + \sin(t)$.
We focused on the case of inf-sup unstable equal-interpolation spaces where the non-zero stabilization $j_h\inner{\cdot}{\cdot}$
is given by the continuous
interior penalty stabilization (CIP, see \cite{Burman2006,Burman2007}).
 In all simulations below, 
 we used a $k$-step BDF time stepping scheme in time
given a finite element space with polynomials of degree $k$ in space.
 The initialization is always performed using the correspondent Ritz projection in order to guarantee optimal convergence. All computations have been performed using open source finite element software \href{https://ngsolve.org/}{NGSolve}, \cite{Schoeberl1997}, \cite{Schoeberl2014}.

\begin{figure}
    \centering
    \includegraphics[clip, trim = 1.5cm 14cm 0.cm 1.5cm, width=\textwidth]{./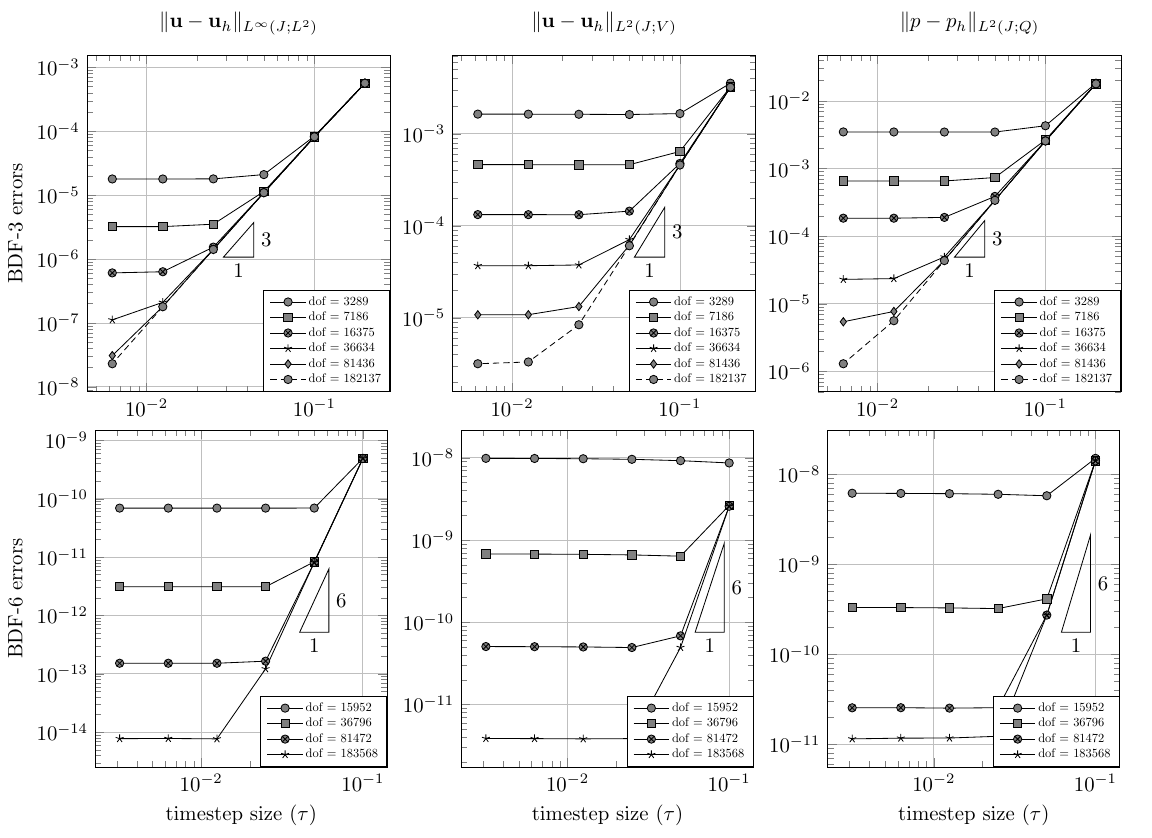}
    \caption{Temporal convergence studies for various mesh sizes. Top: BDF-3, the pair $\bbP^3$--$\bbP^3$ and CIP stabilization. Bottom: BDF-6, the pair $\bbP^6$--$\bbP^6$ and CIP stabilization.}
    \label{fig:BDF-CIP-Ritz}
\end{figure}

In \autoref{fig:BDF-CIP-Ritz} we plot the discrete analogous of the
norms $L^\infty(J;L^2)$, $L^2(J;H^1)$ and $L^2(J;L^2)$ (left to right) for the
quantities of interest versus the time step size for various spatial refinements. We test BDF-3 and BDF-6
with the pair $\bbP^3$--$\bbP^3$ and $\bbP^6$--$\bbP^6$ in space,
respectively. Note that
$\norm{\bv}_{L^2(J;V)}=\norm{\nu^{\frac{1}{2}}\nabla\bv}_{L^2(J;L^2)}$
and $\norm{\bv}_{L^2(J;Q)}=\norm{\nu^{-\frac{1}{2}}\bv}_{L^2(J;L^2)}$.
Both graphs show the predicted optimal-order convergence in time
as the error in space becomes smaller and smaller. 
The error curves flatten out since the spatial error starts dominating
for very small time steps.
For completeness, we also conducted convergence experiments in space which are
summarized in \autoref{fig:spaceBDF-CIP-Ritz} 
showing that the expected optimal spatial convergence order.

\begin{figure}
    \centering
    \includegraphics[clip, trim = 1.5cm 14cm 0.cm 1.5cm, width=\textwidth]{./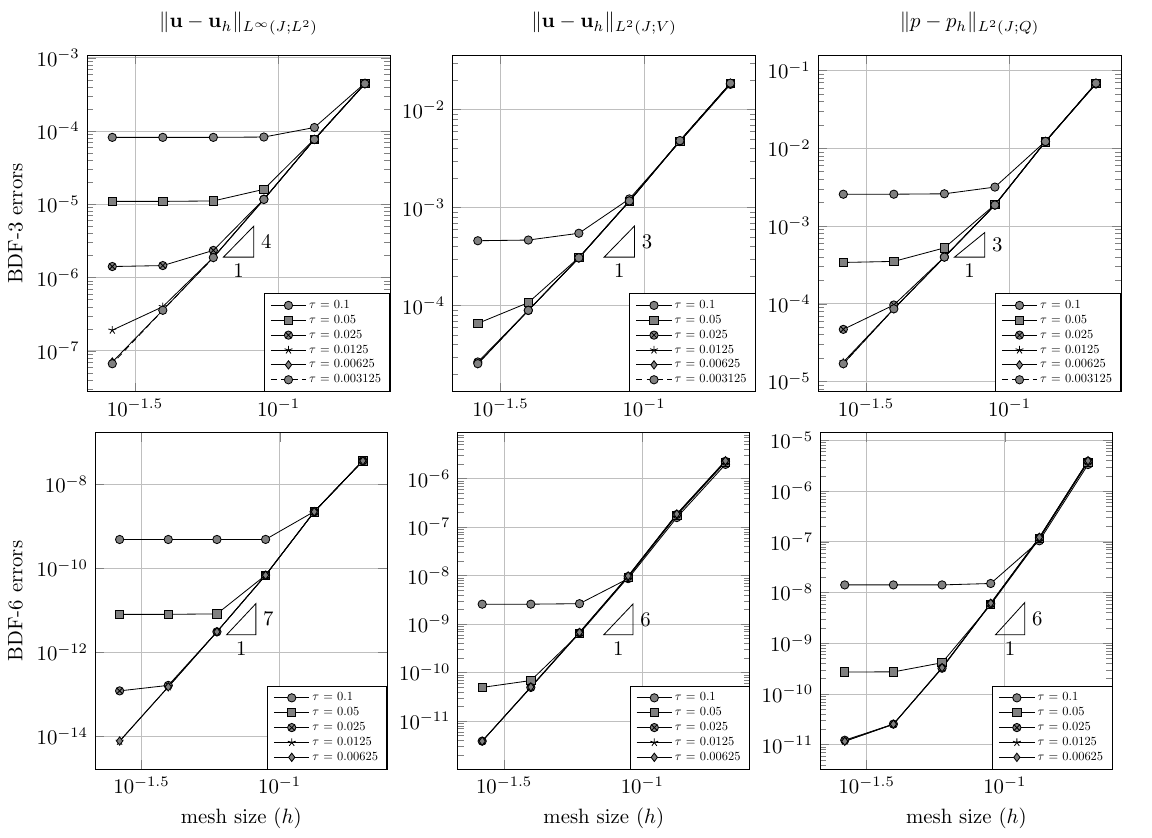}
    \caption{Spatial convergence studies for various time step lengths. Top: BDF-3, the pair $\bbP^3$--$\bbP^3$ and CIP stabilization. Bottom: BDF-6, the pair $\bbP^6$--$\bbP^6$ and CIP stabilization.}
    \label{fig:spaceBDF-CIP-Ritz}
\end{figure}

\subsection{Behavior in the small time step limit}
Finally, we briefly revisit the
observations and experiments in~\cite{Burman2009} 
and illustrate how the initial velocity approximation can affect
the error of the pressure approximation for small time steps also
in the case of higher-order BDF time stepping methods,
cf. Section~\ref{sec:small_timestep}.
For initial velocity approximations 
which are not discretely divergence free,
we expect a larger starting error
when the small time-step limit is not fulfilled.
As in \cite{Burman2009}, the exact solution
is chosen as in 
\eqref{eq:manufactured-solution} with $g(t) = 1$
and corresponding 
right-hand side $f$, boundary and initial data.
We compare the error in the pressure after the first
computed time step of the BDF scheme, i.e.
$$
\tau^{\frac{1}{2}}\norm{p(t_q)-p_h^q}_Q,
$$
with two different initial velocity approximation. In
\autoref{fig:small_timestep} the difference between the use of the
Ritz projection and the use of a Lagrange interpolant for BDF-3 and
BDF-6 is shown (right-hand side column). We verify that the
instability already seen in \cite{Burman2009} for BDF-1 extends to
higher order BDF schemes when the time-step decreases. Also, in the
same figure we show how the use of a Ritz projection for the initial
data resolves this issue in both cases (left-hand side column).
\begin{figure}[h]
    \centering
    \includegraphics[clip, trim = 1.5cm 12cm 1cm 1.5cm, width=0.9\textwidth]{./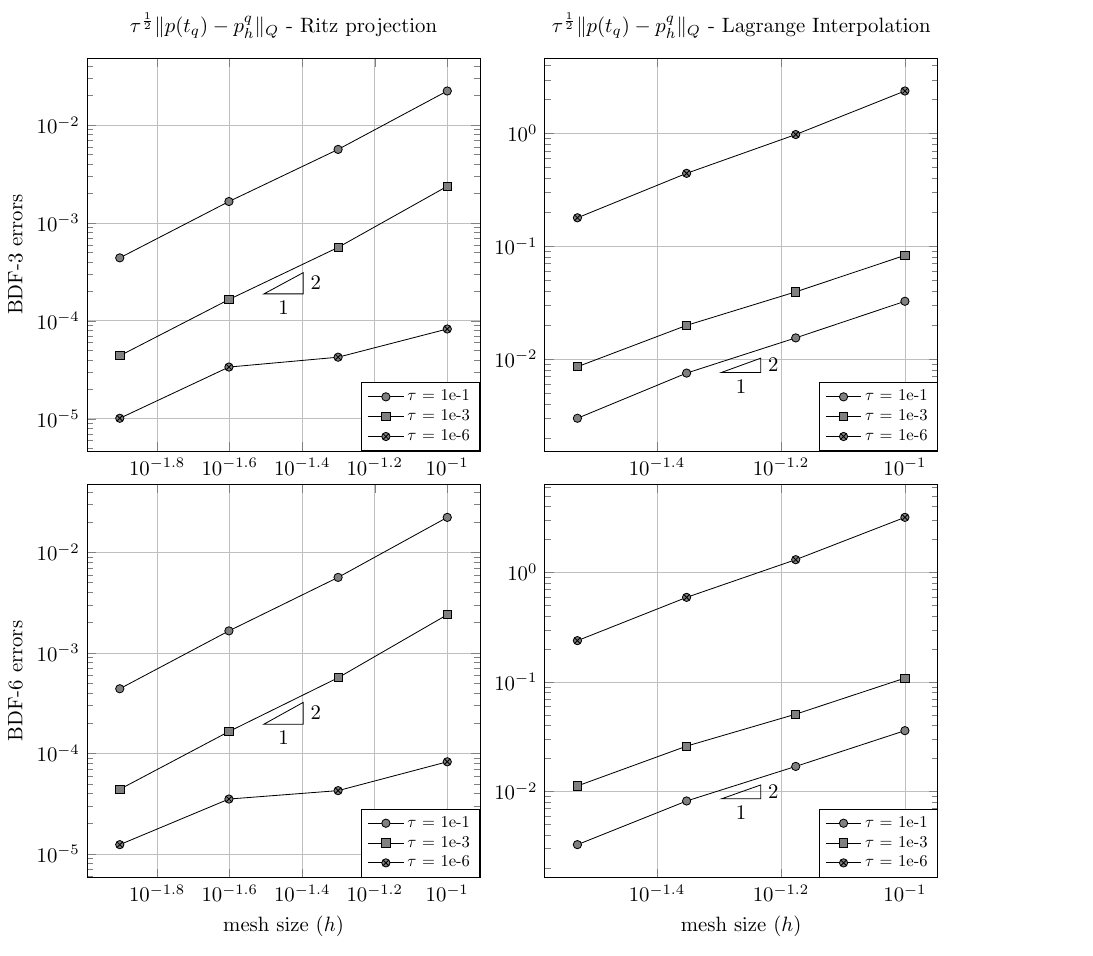}
    \caption{Pressure error studies in the small time-step limit using Ritz projection (left) and Lagrange interpolation (right) for the initial values. Top: BDF-3, the pair $\bbP^1$--$\bbP^1$ and CIP stabilization. Bottom: BDF-6, the pair $\bbP^1$--$\bbP^1$ and CIP stabilization.}
    \label{fig:small_timestep}
\end{figure}


\section*{Acknowledgements}
The work of Bal\'azs Kov\'acs is funded by the Heisenberg Programme of the Deut\-sche For\-schungs\-gemeinschaft (DFG, German Research Foundation) -- Project-ID 446431602. During the early preparation of the manuscript B.K.~was working at the University of Regensburg.

\appendix

\section{G-stability for semi-inner product}

\label{sec:semi_proof}
Lemma \ref{lemma:Dahlquist1} is equivalent, as stated in \cite[p.65]{Dahlquist1976}, to require that
\begin{equation}
    \sum_{j=0}^q\sum_{j=0}^q s_{ij}\inner{\bv^i}{\bv^j}\geqslant 0 ,
    \label{eq:appendix1}
\end{equation}
with
\begin{equation}
    S=[s_{ij}]=
    \begin{bmatrix}
        \delta_0 \\
        \vdots \\
        \delta_q
    \end{bmatrix}
    \begin{bmatrix}
        \mu_0 & \ldots & \mu_q
    \end{bmatrix}
    +
    \begin{bmatrix}
        \mu_0 \\
        \vdots \\
        \mu_q
    \end{bmatrix}
    \begin{bmatrix}
        \delta_0 & \ldots & \delta_q
    \end{bmatrix}
    +
    \begin{bmatrix}
        G & \mathbf{0}_q \\
        \mathbf{0}_q^T & 0
    \end{bmatrix}
    +
    \begin{bmatrix}
        0 & \mathbf{0}_q^T \\
        \mathbf{0}_q & G
    \end{bmatrix} , 
    \label{eq:appendix2}
\end{equation}
where $\mathbf{0}_q = [0,\ldots, 0]^T$ is the q-th zero vector. One can see that the matrix $S$ is independent from the semi-inner product chosen, but only dependent on the polynomials and thus positive semi-definite by hypothesis. Also, relation \eqref{eq:appendix1} holds for any semi-inner product by definition of semi-inner product, thus the Lemma is proven.

\section{Estimation of the defect}
\label{sec:rhs_conv_bound_velocity}

We want to bound the term $\norm{\bard \bu(t_n)-\partial_t\bu(t_n)-\bard\be_\pi^n}_H^2$. By the property of the BDF methods (see \cite{Hairer1993}) we have that
\begin{equation} 
    \bard\bu(t_n)-\partial_t\bu(t_n)=\bard R^n-R_t(t_n) ,
    \label{eq:92}
\end{equation}
where
\begin{equation}
    R(t)=\frac{1}{q!}\int_{t_{n-q}}^{t_n}(t-s)^q \partial_t^{(q+1)}\bu(s)\d s ,
    \label{eq:93}
\end{equation}
and the following estimate holds, where the Cauchy--Schwarz inequality is used in the last passage
\begin{equation}
    \begin{aligned}
        \norm{\bard\bu(t_n)-\partial_t\bu(t_n)}_H\leqslant C\Bigl( \tau^{q-1}\int_{t_{n-q}}^{t_n}
        \norm{\partial_t^{(q+1)}\bu(s)}_H\d s\Bigr)\\
        =C\left(\tau^{q-2}\int_{t_{n-q}}^{t_n}
        \tau\norm{\partial_t^{(q+1)}\bu(s)}_H\d s \right)
        \leqslant C\tau^{q-\frac{1}{2}}\norm{\partial_t^{(q+1)}\bu}_{L^2((t_{n-q},t_n);H)} .
        \label{eq:94}
    \end{aligned}
\end{equation}
Moreover, from  \eqref{eq:94}, \eqref{eq:77}, \eqref{eq:17} and using the fact that $\sum_{i=0}^q\alpha_i=0$ we have
\begin{equation}
    \begin{aligned}
        \norm{\bard\be_\pi^n}_H=\norm[auto]{\frac{1}{\tau}\sum_{i=0}^q\Bigl(\alpha_i\be_\pi^{n-i}-\alpha_i\be_\pi^{n-q}\Bigr) }_H = \norm[auto]{\frac{1}{\tau}\sum_{i=0}^q\alpha_i\int_{t_{n-q}}^{t_{n-1}}\partial_t\be_{\pi}(s)\d s }_H \\
        \leqslant C \tau^{-1}\sum_{i=0}^q \norm[auto]{\int_{t_{n-q}}^{t_{n-1}}\partial_t\be_{\pi}(s)\d s }_H \leqslant  C \tau^{-1}\sum_{i=0}^q \int_{t_{n-q}}^{t_{n-i}}\norm[auto]{\partial_t\be_{\pi}(s)}_H\d s \\
        \leqslant C \tau^{-\frac{1}{2}}\norm{\partial_t\be_{\pi}(s)}_{L^2((t_{n-q},t_n);H)} \leqslant C \tau^{-\frac{1}{2}}h^{r_\bu}\norm{\partial_t\bu}_{L^2((t_{n-q},t_n);H^{\br_u}(\Omega))} .
    \end{aligned}
    \label{eq:95}
\end{equation}
Overall
\begin{equation}
    \begin{aligned}
        \norm{\bard \bu(t_n)-\partial_t\bu(t_n)-\bard\be_\pi^n}_H^2\leqslant \norm{\bard \bu(t_n)-\partial_t\bu(t_n)}_H^2+\norm{\bard\be_\pi^n}_H^2\\
        \leqslant C \Bigl( \tau^{2q-1}\norm{\partial_t^{(q+1)}\bu}_{L^2((t_{n-q},t_n);H)}^2 +\tau^{-1}h^{2r_\bu}\norm{\partial_t\bu}_{L^2((t_{n-q},t_n);H^{r_\bu}(\Omega))}^2 \Bigr) .
    \end{aligned}
\end{equation}

\bibliographystyle{siamplain}
\bibliography{bibliography,bibliography_massing}

\end{document}